
\documentclass[12pt]{article}
\usepackage{a4wide,color,eucal,enumerate,mathrsfs}
\usepackage[normalem]{ulem}
\usepackage{amsmath,amssymb,epsfig,graphicx}
\usepackage{esint}
\numberwithin{equation}{section}

\usepackage[latin1]{inputenc}

%

\newcommand{\F}{\mathbb{F}}

\newcommand{\N}{\mathbb{N}}
\newcommand{\Q}{\mathbb{Q}}
\newcommand{\R}{\mathbb{R}}




\newcommand{\mm}{{\mbox{\boldmath$m$}}}







\newcommand{\ggamma}{{\mbox{\boldmath$\gamma$}}}

\newcommand{\ppi}{{\mbox{\boldmath$\pi$}}}




\newcommand{\sfd}{{\sf d}}

\newcommand{\rme}{{\mathrm e}}

\newcommand{\Kliminf}{K\kern-3pt-\kern-2pt\mathop{\rm lim\,inf}\limits}  
\newcommand{\supp}{\mathop{\rm supp}\nolimits}   
\newcommand{\Lip}{\mathop{\rm Lip}\nolimits}          
\renewcommand{\d}{{\mathrm d}}
\newcommand{\dt}{{\d t}}

\newcommand{\restr}[1]{\lower3pt\hbox{$|_{#1}$}}

\newcommand{\Haus}[1]{{\mathscr H}^{#1}}     
\newcommand{\Leb}[1]{{\mathscr L}^{#1}}      


\newcommand{\eps}{\varepsilon}  
\newcommand{\nchi}{{\raise.3ex\hbox{$\chi$}}}




\setlength{\marginparwidth}{3cm}






\newcommand{\Pc}[2]{\overline{#1}\kern-2pt^{\vphantom 0}_{#2}}





\newcommand{\relgradq}[2]{|\nabla #1|_{*,#2}} 
\newcommand{\weakgradq}[2]{|\nabla #1|_{w,#2}} 

\newenvironment{proof}{\removelastskip\par\medskip   
\noindent{\em Proof.}
\rm}{\penalty-20\null\hfill$\square$\par\medbreak}

\newtheorem{theorem}{Theorem}[section]

\newtheorem{corollary}[theorem]{Corollary}
\newtheorem{lemma}[theorem]{Lemma}
\newtheorem{proposition}[theorem]{Proposition}
\newtheorem{definition}[theorem]{Definition}

\newtheorem{remark}[theorem]{Remark}

\newcommand{\prob}[1]{\mathscr P(#1)}

\newcommand{\lims}{\varlimsup}

\newcommand{\e}{{\rm{e}}}                           
\newcommand{\fr}{\hfill$\blacksquare$}                      

\renewcommand{\F}{\mathcal{F}_q}
\newcommand{\Fh}[1]{\mathcal{F}_{#1}}
\newcommand{\Ftwo}{\mathcal{F}_2}

\newcommand{\Ch}{{\rm Ch}}
\newcommand{\Dh}[1]{\mathcal{D}_{#1}}
\newcommand{\Dloc}{\mathcal{D}_{p, {\rm loc}}}

\newcommand{\Ph}[1]{\mathcal{P}_{#1}}
\newcommand{\Lipa}{\Lip_a}
\newcommand{\PCh}[1]{\mathcal{PC}_{#1}}
\newcommand{\Nh}{\mathcal{N}_h}
\newcommand{\Nn}{\mathcal{N}}
\newcommand{\Nph}[1]{\mathcal{N}_{q,#1}}
\newcommand{\ep}{\varepsilon}

\renewcommand{\mm}{\mathfrak m}

\setcounter{tocdepth}{2}

\title{Sobolev spaces in metric measure spaces: \\ reflexivity and lower semicontinuity of slope}
\author{Luigi Ambrosio, Maria Colombo, Simone Di Marino}
\begin{document}

\maketitle
\begin{abstract}
In this paper we make a survey of some recent developments of the theory of Sobolev spaces $W^{1,q}(X,\sfd,\mm)$,
$1<q<\infty$, in metric measure spaces $(X,\sfd,\mm)$. In the final part of the paper we provide a new proof of the reflexivity of the 
Sobolev space based on $\Gamma$-convergence; this result extends Cheeger's work because no Poincar\'e inequality is needed
and the measure-theoretic doubling property is weakened to the metric doubling property of the support of $\mm$.
We also discuss the lower semicontinuity of the slope of Lipschitz functions and some open problems.
\end{abstract}

\tableofcontents
\section{Introduction}

This paper is devoted to the theory of Sobolev spaces $W^{1,q}(X,\sfd,\mm)$ on metric measure spaces $(X,\sfd,\mm)$.
It is on one hand a survey paper on the most recent developments of the theory occurred in \cite{Ambrosio-Gigli-Savare11},
\cite{Ambrosio-Gigli-Savare12} (see also \cite{AmbrosioDiMarino12} for analogous results in the space $BV$ of functions of bounded variation), but 
it contains also new results on the reflexivity of $W^{1,q}$, $1<q<\infty$, improving those of \cite{Cheeger00}.
The occasion for writing this paper has been the course given by the first author in Sapporo (July-August 2012).

In a seminal paper \cite{Cheeger00}, Cheeger investigated the fine properties of Sobolev functions on metric measure
spaces, with the main aim of providing generalized versions of Rademacher's theorem and, along with it, a description
of the cotangent bundle. Assuming that the Polish metric measure structure $(X,\sfd,\mm)$ is doubling and satisfies
a Poincar\'e inequality (see Definition~\ref{dfn:Doubling} and Definition~\ref{dfn:PI} for precise formulations of these structural assumptions)
he proved that the Sobolev spaces are reflexive and that the $q$-power of the slope is $L^q(X,\mm)$-lower semicontinuous,
namely
\begin{equation}\label{lscChee}
f_h,\,f\in\Lip(X),\,\,\,\int_X|f_h-f|^q\,\\d\mm\to 0
\quad\Longrightarrow\quad\liminf_{h\to\infty}\int_X|\nabla f_h|^q\,\d\mm\geq\int_X|\nabla f|^q\,\d\mm.
\end{equation}
Here the slope $|\nabla f|$, also called local Lipschitz constant, is defined by
$$
|\nabla f|(x):=\limsup_{y\to x}\frac{|f(y)-f(x)|}{\sfd(y,x)}.
$$
These results come as a byproduct of a generalized Rademacher's theorem, which can be stated as follows: there exist
an integer $N$, depending on the doubling and Poincar\'e constants, a Borel partition $\{X_i\}_{i\in I}$ of $X$ and 
Lipschitz functions $f^i_j$, $1\leq j\leq N(i)\leq N$, with the property that for all $f\in\Lip(X)$ it is possible to
find Borel coefficients $c^i_j$, $1\leq j\leq N$, uniquely determined $\mm$-a.e. on $X_i$, satisfying
\begin{equation}\label{cheeasy}
|\nabla (f-\sum_{j=1}^{N(i)}c_j^i(x)f_j^i)|(x)=0\qquad\text{$\mm$-a.e. on $X_i$.}
\end{equation}
It turns out that the family of norms on $\R^{N(i)}$ 
$$
\|(c_1^i,\ldots,c_{N(i)}^i)\|_x:=|\nabla \sum_{j=1}^{N(i)}c_j^if_j^i|(x)
$$
indexed by $x\in X_i$ satisfies, thanks to \eqref{cheeasy},
$$
\|(c_1^i(x),\ldots,c_{N(i)}^i(x))\|_x=|\nabla f|(x) \qquad\text{$\mm$-a.e. on $X_i$.}
$$
Therefore, this family of norms provides the norm on the cotangent bundle on $X_i$. Since $N(i)\leq N$, using for instance John's lemma one can find Hilbertian
equivalent norms $|\cdot |_x$ with bi-Lipschitz constant depending only on $N$. This leads to an equivalent (but not canonical)
Hilbertian norm and then to reflexivity. In this paper we aim mostly at lower semicontinuity and reflexivity: we recover the latter 
(and separability as well) without assuming the validity of the Poincar\'e inequality and replacing the doubling assumption on $(X,\sfd,\mm)$ 
with a weaker assumption, namely the geometric doubling of $(\supp\mm,\sfd)$. 

Sobolev spaces, as well as a weak notion of norm of the gradient $|\nabla f|_{C,q}$, are built in \cite{Cheeger00} by considering the best
possible approximation of $f$ by functions $f_n$ having a $q$-integrable upper gradient $g_n$, namely pairs $(f_n,g_n)$
satisfying
\begin{equation}\label{eq:strongug}
|f_n(\gamma_1)-f_n(\gamma_0)|\leq\int_\gamma g_n
\qquad\text{for all absolutely continuous curves $\gamma:[0,1]\to X$.}
\end{equation}
Here, by best approximation
we mean that we minimize 
$$
\liminf_{n\to\infty}\int_X|g_n|^q\,\d\mm
$$
among all sequences $f_n$ that converge to $f$ in $L^q(X,\mm)$. It must be emphasized that even though 
the implication \eqref{lscChee} does not involve at all weak gradients, its proof requires a fine analysis
of the Sobolev spaces and, in particular, their reflexivity. At the same time, in \cite{Shanmugalingam00}
this approach was proved to be equivalent to the one based on the theory of $q$-upper gradients
introduced in \cite{Koskela-MacManus} and leading to a gradient that we shall denote $|\nabla f|_{S,q}$.
 In this theory one imposes the validity of \eqref{eq:strongug}
on ``almost all curves'' in the sense of \cite{Fuglede} and uses this property to define $|\nabla f|_{S,q}$. 
Both approaches are described more in detail
in Appendix A of this paper (see also \cite{Heinonen07} for a nice account of the theory).

More recently, the first author, N.Gigli and G.Savar\'e developed, motivated by a research program on
metric measure spaces with Ricci curvature bounds from below, a new approach to calculus in metric
measure spaces (see also \cite{Gigli12} for the most recent developments). In particular, in  \cite{Ambrosio-Gigli-Savare11} 
and \cite{Ambrosio-Gigli-Savare12} Sobolev spaces and weak gradients are built by a slightly different relaxation procedure, involving
Lipschitz functions $f_n$ with bounded support and their slopes $|\nabla f_n|$ instead of functions $f_n$ with $q$-integrable
upper gradient $g_n$: this leads to a weak gradient a priori larger than $|\nabla f|_{C,q}$. Still in 
 \cite{Ambrosio-Gigli-Savare11}  and \cite{Ambrosio-Gigli-Savare12},
connection with the upper gradient point of view, a different notion of negligible set of curves 
(sensitive to the parametrization of the curves) to quantify exceptions 
in \eqref{eq:strongug} was introduced, leading to a gradient a priori smaller than $|\nabla f|_{S,q}$. One of the main results
of these papers is that all the four notions of gradient a posteriori coincide, and this fact is independent of doubling
and Poincar\'e assumptions. 

The paper, that as we said must be conceived mostly as a survey paper until Section~7, is organized as follows. In Section~2 we recall some
preliminary tools of analysis in metric spaces, the theory of gradient flows (which plays, via energy dissipation estimates,
a key role), $\Gamma$-convergence, $p$-th Wasserstein distance $W_p$, with $p$ dual to the Sobolev exponent $q$, and optimal transport theory.
The latter plays a fundamental role in the construction of suitable measures in the space of absolutely continuous
curves via the so-called superposition principle, that allows to pass from an
``Eulerian'' formulation (i.e. in terms of a curve of measures or a
curve of probability densities) to a ``Lagrangian'' one. In Section~3 we study, following
very closely \cite{Ambrosio-Gigli-Savare12}, the pointwise properties of
the Hopf-Lax semigroup
$$
Q_tf(x):=\inf_{y\in X}f(y)+\frac{\sfd^p(x,y)}{pt^{p-1}},
$$
also emphasizing the role of the so-called asymptotic Lipschitz constant
$$
{\rm Lip}_a(f,x):=\inf_{r>0}{\rm Lip}\bigl(f, B(x,r)\bigr)=\lim_{r\downarrow 0}{\rm Lip}\bigl(f, B(x,r)\bigr),
$$
which is always larger than $|\nabla f|$ and coincides with the upper semicontinuous relaxation of
$|\nabla f|$ in length spaces.

Section~4 presents the two weak gradients $|\nabla f|_{*,q}$ and $|\nabla f|_{w,q}$,
the former obtained by a relaxation and the latter by a weak upper gradient property.
As suggested in the final section of \cite{Ambrosio-Gigli-Savare12}, we work with an
even stronger (a priori) gradient, where in the relaxation procedure we replace $|\nabla f_n|$
with $\Lip_a(f_n,\cdot)$. We present basic calculus rules and stability properties of these
weak gradients.

Section~5 contains the basic facts we shall need on the gradient flow in $L^2(X,\mm)$ of the lower
semicontinuous functional $f\mapsto {\bf C}_q(f):=\tfrac{1}{q}\int_X|\nabla f|_{*,q}^q\,\d\mm$, in particular the entropy
dissipation rate
$$
\frac{\d}{\d t}\int_X\Phi(f_t)\,\d\mm=-\int_X\Phi''(f_t)|\nabla
f_t|^q_{*,q}\,\d\mm
$$
along this gradient flow. Notice that, in order to apply the Hilbertian theory of gradient flows, we need
to work in $L^2(X,\mm)$. Even when $\mm$ is finite, this requires a suitable definition (obtained by
truncation) of $|\nabla f|_{*,q}$ when $q>2$ and $f\in L^2\setminus L^q(X,\mm)$.

In Section~6 we prove the equivalence of gradients. Starting from a
function $f$ with $|\nabla f|_{w,q}\in L^q(X,\mm)$ we approximate it
by the gradient flow of $f_t$ of ${\bf C}_q$ starting from $f$ and we use
the weak upper gradient property to get
$$
\limsup_{t\downarrow 0}\frac1t\int_0^t\int_X\frac{|\nabla
f_s|_{*,q}^q}{f_s^{p-1}}\,\d\mm\d s\leq\int_X\frac{|\nabla
f|_{w,q}^q}{f^{p-1}}\,\d\mm
$$
where $p=q/(q-1)$ is the dual exponent of $q$. Using the stability
properties of $|\nabla f|_{*,q}$ we eventually get $|\nabla
f|_{*,q}\leq|\nabla f|_{w,q}$ $\mm$-a.e. in $X$.

In Section~7 we prove that the Sobolev space $W^{1,q}(X,\sfd,\mm)$ is reflexive when $1<q<\infty$, $(\supp\mm,\sfd)$ is separable and
doubling, and $\mm$ is finite on bounded sets. Instead of
looking for an equivalent Hilbertian norm (whose existence is presently known only if the metric measure structure is doubling and the
Poincar\'e inequality holds), we rather look for a discrete scheme, involving functionals $\Fh{\delta}(f)$ of the form
$$
\Fh{\delta}(f)=\sum_i \frac 1{\delta^q} \sum_{A^\delta_j \sim A^\delta_i} | f_{\delta,i} - f_{\delta,j}|^q\mm(A_i^\delta) .
$$
Here $A_i^\delta$ is a well chosen decomposition of $\supp\mm$ on scale $\delta$, $f_{\delta,i}=\fint_{A_i^\delta}f$ 
and the sum involves cells $A_j^\delta$ close to $A_i^\delta$,
in a suitable sense. This strategy is very close to the construction of approximate $q$-energies on fractal sets and more general
spaces, see for instance \cite{Peirone}, \cite{Sturm}.

It is fairly easy to show that any $\Gamma$-limit point $\Fh{0}$ of $\Fh{\delta}$ as $\delta\to 0$ satisfies
\begin{equation}\label{eq:gamma11}
\Fh{0}(f)\leq c(c_D,q)\int_X\Lip_a^q(f,\cdot)\,\d\mm\qquad\text{for all Lipschitz $f$ with bounded support,}
\end{equation}
where $c_D$ is the doubling constant of $(X,\sfd)$ (our proof gives $c(c_D,q)\leq 6^qc_D^3$). 
More delicate is the proof of lower bounds of $\Fh{0}$, which uses
a suitable discrete version of the weak upper gradient property and leads to the inequality
\begin{equation}\label{eq:gamma22}
\frac{1}{4^q}\int_X|\nabla f|_{w,q}^q\,\d\mm\leq\Fh{0}(f)\qquad\forall f\in W^{1,q}(X,\sfd,\mm).
\end{equation}
Combining \eqref{eq:gamma11}, \eqref{eq:gamma22} and the equivalence of weak gradients gives
$$
\frac{1}{4^q}\int_X|\nabla f|_{w,q}^q\,\d\mm\leq\Fh{0}(f)\leq c(c_D,q)\int_X|\nabla f|_{w,q}^q\,\d\mm
\qquad\forall f\in W^{1,q}(X,\sfd,\mm).
$$
The discrete functionals $\Fh{\delta}(f)+\sum_i |f_{\delta,i}|^q\mm(A_i^\delta)$ describe $L^q$ norms in suitable discrete spaces, hence they
satisfy the Clarkson inequalities; these inequalities (which reduce to the parallelogram identity in the
case $q=2$) are retained by the $\Gamma$-limit point $\Fh{0}+\|\cdot\|_q^q$. This leads to an equivalent 
uniformly convex norm in $W^{1,q}(X,\sfd,\mm)$, and therefore to reflexivity. As a byproduct one obtains
density of bounded Lipschitz functions in $W^{1,q}(X,\sfd,\mm)$ and separability. In this connection, notice that the results of
\cite{Ambrosio-Gigli-Savare11}, \cite{Ambrosio-Gigli-Savare12} provide, even without a doubling assumption,
a weaker property (but still sufficient for some applications), the so-called density in energy; on the other hand, 
under the assumptions of \cite{Cheeger00} one has even more, namely density of Lipschitz functions in the Lusin sense.\\*
Notice however that $\Fh{0}$, like the auxiliary Hilbertian norms of \cite{Cheeger00}, is not canonical:
it might depend on the decomposition $A_i^\delta$ and we don't expect the whole family $\Fh{\delta}$
to $\Gamma$-converge as $\delta\to0^+$.

In Section~8 we prove \eqref{lscChee}, following in large part the scheme of \cite{Cheeger00} (although we get the result
in a more direct way, without an intermediate result in length spaces). In particular we need the Poincar\'e inequality to establish the bound
$$ | \nabla f | \leq C \, |\nabla f |_{w,q} \qquad\text{for any Lipschitz function $f$ with bounded support,}$$
which, among other things, prevents $|\nabla f|_{w,q}$ from being trivial.

Finally, in the appendices we describe more in detail the intermediate gradients $|\nabla f|_{C,q}$ and $|\nabla f|_{S,q}$, we
provide another approximation by discrete gradients also in non-doubling spaces (but our results here are not conclusive)
and we list a few open problems.

\smallskip
\noindent {\bf Acknowledgement.} The first author acknowledges the support
of the ERC ADG GeMeThNES. The authors thank N.Gigli for useful comments on a
preliminary version of the paper.

\section{Preliminary notions}\label{sec:preliminary}

In this section we introduce some notation and recall a few basic
facts on absolutely continuous functions, gradient flows of convex
functionals and optimal transportation, see also
\cite{Ambrosio-Gigli-Savare08}, \cite{Villani09} as general
references.

\subsection{Absolutely continuous curves and slopes}
Let $(X,\sfd)$ be a metric space, $J\subset\R$ a closed interval and
$J\ni t\mapsto x_t\in X$. We say that $(x_t)$ is \emph{absolutely
continuous} if
$$
\sfd(x_s,x_t)\leq\int_s^tg(r)\,\d r\qquad\forall s,\,t\in J,\,\,s<t
$$
for some $g\in L^1(J)$. It turns out that, if $(x_t)$ is absolutely
continuous, there is a minimal function $g$ with this property,
called \emph{metric speed}, denoted by $|\dot{x}_t|$ and given for
a.e. $t\in J$ by
$$
|\dot{x}_t|=\lim_{s\to t}\frac{\sfd(x_s,x_t)}{|s-t|}.
$$
See \cite[Theorem~1.1.2]{Ambrosio-Gigli-Savare08} for the simple
proof.

We will denote by $C([0,1],X)$ the space of continuous curves from
$[0,1]$ to $(X,\sfd)$ endowed with the $\sup$ norm. The set
$AC^p([0,1],X)\subset C([0,1],X)$ consists of all absolutely
continuous curves $\gamma$ such that $\int_0^1|\dot\gamma_t|^p\,\d
t<\infty$: it is the countable union of the sets $\{\gamma:\
\int_0^1|\dot\gamma_t|^p\,\d t\leq n\}$, which are easily seen to be
closed if $p>1$. Thus $AC^p([0,1],X)$ is a Borel subset of
$C([0,1],X)$. The \emph{evaluation maps} $\e_t:C([0,1],X)\to X$ are
defined by
\[
\e_t(\gamma):=\gamma_t,
\]
and are clearly continuous.

Given $f:X\to\R$ and $E\subset X$, we denote by $\Lip (u, E)$ the Lipschitz constant of the function $u$ on $E$, namely
$$
\Lip(u,E):=\sup_{x,\,y\in E,\,x\neq y}\frac{|f(x)-f(y)|}{\sfd(x,y)} .
$$ 
Given $f:X\to\R$, we define \emph{slope} (also called local
Lipschitz constant) by
$$
|\nabla f|(x):=\lims_{y\to x}\frac{|f(y)-f(x)|}{\sfd(y,x)}.
$$
For $f,\,g:X\to\R$ Lipschitz it clearly holds
\begin{subequations}
\begin{align}
\label{eq:subadd}
|\nabla(\alpha f+\beta g)|&\leq|\alpha||\nabla f|+|\beta||\nabla g|\qquad\forall \alpha,\beta\in\R,\\
\label{eq:leibn} |\nabla (fg)|&\leq |f||\nabla g|+|g||\nabla f|.
\end{align}
\end{subequations}

We shall also need the following calculus lemma.

\begin{lemma}\label{lem:Fibonacci}
{
Let $f:(0,1)\to\R$, $q\in [1,\infty]$, $g\in L^q(0,1)$ nonnegative be satisfying
$$
|f(s)-f(t)|\leq\bigl|\int_s^t g(r)\,\d r\bigr|\qquad\text{for $\Leb{2}$-a.e. $(s,t)\in (0,1)^2$.}
$$
Then $f\in W^{1,q}(0,1)$ and $|f'|\leq g$ a.e. in $(0,1)$.}
\end{lemma}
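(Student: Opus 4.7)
The plan is to produce an absolutely continuous representative of the equivalence class of $f$ and to verify that its derivative is dominated by $g$. To do this, I would first locate, via Fubini, a full-measure ``good'' set $A\subset(0,1)$; then upgrade the $\Leb{2}$-a.e.\ hypothesis to a genuine pointwise inequality on the square $A\times A$; and finally extend $f|_A$ by continuity to obtain the desired representative.

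By Fubini applied to the $\Leb{2}$-null exceptional set, I pick $A\subset(0,1)$ with $\Leb{1}((0,1)\setminus A)=0$ so that, for every $s\in A$, the inequality $|f(s)-f(t)|\leq|\int_s^t g(r)\,\d r|$ holds for $\Leb{1}$-a.e.\ $t\in(0,1)$. Given any $s_1,s_2\in A$, the set of $t$'s for which both inequalities $|f(s_i)-f(t)|\leq|\int_{s_i}^t g(r)\,\d r|$ simultaneously hold has full $\Leb{1}$-measure; selecting a sequence $(t_n)$ in this set with $t_n\to s_2$, the triangle inequality and the absolute continuity of the indefinite integral of $g$ yield
$$|f(s_1)-f(s_2)|\leq\lim_{n\to\infty}\left(\left|\int_{s_1}^{t_n}g(r)\,\d r\right|+\left|\int_{t_n}^{s_2}g(r)\,\d r\right|\right)=\left|\int_{s_1}^{s_2}g(r)\,\d r\right|.$$

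This pointwise inequality on $A\times A$ shows that $f|_A$ is uniformly continuous (since the indefinite integral of $g\in L^1(0,1)$ is), so it admits a unique continuous extension $\tilde f:(0,1)\to\R$ that satisfies the same bound for all pairs $t_1,t_2\in(0,1)$. Classical one-dimensional real analysis then gives that $\tilde f$ is absolutely continuous with $|\tilde f'|\leq g$ $\Leb{1}$-a.e., and since $g\in L^q(0,1)$ we conclude $\tilde f\in W^{1,q}(0,1)$. Because $\tilde f$ agrees with $f$ on the full-measure set $A$, the class of $f$ in $L^q(0,1)$ coincides with $\tilde f$, so $f\in W^{1,q}(0,1)$ with $|f'|\leq g$ a.e.

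The delicate point is the second step: an $\Leb{2}$-a.e.\ hypothesis on $(0,1)^2$ says nothing about the restriction of the bad set to lower-dimensional subsets such as $\{s_1\}\times(0,1)$ or diagonal segments, so one cannot naively take $t\to s_2$ or $s_1\to s_2$ limits inside the assumed estimate. Approaching $s_2$ only through the \emph{common} full-measure set of good $t$'s, and killing the remainder with the absolute continuity of the indefinite integral of $g$, is the essential trick; once this upgrade is in place, everything else reduces to textbook one-dimensional Sobolev theory.
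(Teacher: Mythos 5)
Your proof is correct, and it takes a genuinely different route from the one in the paper. The paper works at the level of distributions: it uses Fubini once to find a good horizontal slice (which gives $f\in L^\infty$), and a second time on the set $\{(t,h):(t,t+h)\in N\}$ to extract a sequence of increments $h_i\downarrow 0$ along which the difference quotients are admissible; testing these against $\phi\in C^1_c(0,1)$ yields $\bigl|\int_0^1 f\phi'\,\d t\bigr|\le\int_0^1 g|\phi|\,\d t$, so the distributional derivative of $f$ is a measure dominated by $g\Leb 1$, whence $f\in W^{1,1}$ and $|f'|\le g$. You instead upgrade the $\Leb 2$-a.e.\ hypothesis to a true pointwise inequality on a full-measure set $A\times A$ (approaching $s_2$ through the common good set of $t$'s and absorbing the error with the absolute continuity of $\int g$ -- this is exactly the right way around the obstruction you identify, since the bad set $N$ is symmetric and both slices $N_{s_1}$, $N_{s_2}$ are null for $s_1,s_2\in A$), and then you invoke only classical one-variable facts. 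Your argument buys a little more: it exhibits the absolutely continuous representative explicitly and shows that it satisfies $|\tilde f(s)-\tilde f(t)|\le\bigl|\int_s^t g\bigr|$ for \emph{all} $s,t$, and it does not presuppose measurability of $f$. The paper's argument is the more standard weak-derivative computation and avoids having to construct and identify a continuous extension. Both are complete proofs of the lemma.
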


\begin{proof} 
Let $N\subset (0,1)^2$ be the $\Leb{2}$-negligible subset where the above 
inequality fails. Choosing $s\in (0,1)$, whose existence is ensured by Fubini's theorem, such that
$(s,t)\notin N$ for a.e. $t\in (0,1)$, we obtain that $f\in L^\infty(0,1)$. 
Since the set $\{(t,h)\in (0,1)^2:\ (t,t+h)\in N\cap (0,1)^2\}$
is $\Leb{2}$-negligible as well, we can apply Fubini's theorem to obtain that for a.e. $h$ it holds
$(t,t+h)\notin N$ for a.e. $t\in (0,1)$. Let $h_i\downarrow 0$ with this property and use
the identities
$$
\int_0^1f(t)\frac{\phi(t+h)-\phi(t)}{h}\,\d t=-\int_0^1\frac{f(t-h)-f(t)}{-h}\phi(t)\,\d t
$$
with $\phi\in C^1_c(0,1)$ and $h=h_i$ sufficiently small to get
$$
\biggl|\int_0^1f(t)\phi'(t)\,\d t\biggr|\leq\int_0^1g(t)|\phi(t)|\,\d t.
$$
It follows that the distributional derivative of $f$ is a signed
measure $\eta$ with finite total variation which satisfies
\begin{displaymath}
  -\int_0^1f\phi'\,\d t=\int_0^1 \phi\,\d\eta,\qquad
  \Bigl|\int_0^1 \phi\,\d\eta\Bigr|\le \int_0^1g|\phi|\,\d
t\quad\text{for every }\phi\in C^1_c(0,1);
\end{displaymath}
therefore $\eta$ is absolutely continuous with respect to the Lebesgue
measure with $|\eta|\le g\Leb 1$. 
This gives the $W^{1,1}(0,1)$ regularity and, at the same time, the inequality
$|f'|\leq g$ a.e. in $(0,1)$. The case $q>1$ immediately follows
by applying this inequality when $g\in L^q(0,1)$.
\end{proof}

Following \cite{Heinonen-Koskela98}, we say that a Borel function
$g:X\to [0,\infty]$ is an upper gradient of a Borel function $f:X\to\R$ if the
inequality
\begin{equation}\label{eq:uppergradient}
\biggl|\int_{\partial\gamma}f\biggr|\leq\int_\gamma g
\end{equation}
holds for all absolutely continuous curves $\gamma:[0,1]\to X$. Here
$\int_{\partial\gamma} f=f(\gamma_1)-f(\gamma_0)$, while
$\int_\gamma g=\int_0^1g(\gamma_s)|\dot\gamma_s|\,\d s$.

It is well-known and easy to check that the slope is an upper
gradient, for locally Lipschitz functions.

\subsection{Gradient flows of convex and lower semicontinuous functionals}

Let $H$ be an Hilbert space, $\Psi:H\to\R\cup\{+\infty\}$ convex and
lower semicontinuous and $D(\Psi)=\{\Psi<\infty\}$ its finiteness
domain. Recall that a gradient flow $x:(0,\infty)\to H$ of $\Psi$ is
a locally absolutely continuous map with values in $D(\Psi)$
satisfying
$$
-\frac{\d}{\dt}x_t\in\partial^-\Psi(x_t)\qquad\text{for a.e. $t\in
(0,\infty)$.}
$$
Here $\partial^-\Psi(x)$ is the subdifferential of $\Psi$, defined
at any $x\in D(\Psi)$ by
$$
\partial^-\Psi(x):=\left\{p\in H^*:\ \Psi(y)\geq\Psi(x)+\langle
p,y-x\rangle\,\,\forall y\in H\right\}.
$$

We shall use the fact that for all $x_0\in\overline{D(\Psi)}$ there
exists a unique gradient flow $x_t$ of $\Psi$ starting from $x_0$,
i.e. $x_t\to x_0$ as $t\downarrow 0$, and that $t\mapsto\Psi(x_t)$
is nonincreasing and locally absolutely continuous in $(0,\infty)$.
In addition, this unique solution exhibits a regularizing effect,
namely $-\tfrac{\d}{\d t}x_t$ is for a.e. $t\in (0,\infty)$ the
element of minimal norm in
$\partial^-\Psi(x_t)$. 

\subsection{The space $(\prob X,W_p)$ and the superposition principle}

Let $(X,\sfd)$ be a complete and separable metric space and $p\in
[1,\infty)$. We use the notation $\prob X$ for the set of all Borel
probability measures on $X$. Given $\mu,\,\nu\in\prob X$, we define
the Wasserstein (extended) distance $W_p(\mu,\nu)\in [0,\infty]$
between them as
\[
W_p^p(\mu,\nu):=\min\int \sfd^p(x,y)\,\d\ggamma(x,y).
\]
Here the minimization is made in the class $\Gamma(\mu,\nu)$ of all
probability measures $\ggamma$ on $X\times X$ such that
$\pi^1_ \sharp\ggamma=\mu$ and $\pi^2_ \sharp\ggamma=\nu$, where
$\pi^i:X\times X\to X$, $i=1,\,2$, are the coordinate projections
and $f_ \sharp:\prob{Y}\to\prob{Z}$ is the push-forward operator induced
by a Borel map $f:Y\to Z$.

An equivalent definition of $W_p$ comes from the dual formulation of
the transport problem:
\begin{equation}
\label{eq:dualitabase} \frac1pW_p^p(\mu,\nu)=\sup_{\psi\in{\rm
Lip}_b(X)}\int\psi\, \d\mu+\int \psi^c\,\d\nu.
\end{equation}
Here ${\rm Lip}_b(X)$ stands for the class of bounded Lipschitz functions and
the $c$-transform $\psi^c$ is defined by
\[
\psi^c(y):=\inf_{x\in X}\frac{\sfd^p(x,y)}p-\psi(x).
\]

We will need the following result, proved in \cite{Lisini07}: it
shows how to associate to an absolutely continuous curve $\mu_t$
w.r.t. $W_p$ a plan $\ppi\in\prob{C([0,1],X)}$ representing the
curve itself (see also \cite[Theorem~8.2.1]{Ambrosio-Gigli-Savare08}
for the Euclidean case).

\begin{proposition}[Superposition principle]\label{prop:lisini}
Let $(X,\sfd)$ be a complete and separable metric space with $\sfd$
bounded, $p\in (1,\infty)$ and let $\mu_t\in AC^p\bigl([0,T];(\prob X,W_p)\bigr)$.
Then there exists $\ppi\in\prob{C([0,1],X)}$,
concentrated on $AC^p([0,1],X)$, such that $(\e_t)_\sharp\ppi=\mu_t$
for any $t\in[0,T]$ and
\begin{equation}\label{eq:Lisini}
\int|\dot\gamma_t|^p\,\d\ppi(\gamma)=|\dot\mu_t|^p\qquad \text{for
a.e. $t\in [0,T]$.}
\end{equation}
\end{proposition}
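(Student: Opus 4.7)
The plan is to approximate $\mu_t$ by the laws of piecewise-interpolated random trajectories built from iterated optimal transport between close time slices, and then pass to the limit via tightness and lower semicontinuity. Without loss of generality I take $T=1$ and write $h=2^{-n}$, $t_i^n = ih$, $i=0,\ldots,2^n$.

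For each $n$, I would choose optimal transport plans $\ggamma_i^n\in\Gamma(\mu_{t_i^n},\mu_{t_{i+1}^n})$ achieving $W_p(\mu_{t_i^n},\mu_{t_{i+1}^n})$, and then apply the gluing lemma iteratively in $X^{2^n+1}$ to produce $\sigma_n\in\prob{X^{2^n+1}}$ whose marginal on consecutive coordinates $i,i+1$ equals $\ggamma_i^n$. To move from these discrete data to measures on continuous curves, I would use the Kuratowski embedding to view $(X,\sfd)$ as an isometric, closed subspace of a separable Banach space $(E,\|\cdot\|)$ (boundedness of $\sfd$ makes the embedding bounded as well). Define $\Phi_n:X^{2^n+1}\to C([0,1],E)$ sending $(x_0,\ldots,x_{2^n})$ to the piecewise affine interpolation in $E$ through the nodes $(t_i^n,x_i)$, and set $\ppi_n:=(\Phi_n)_\sharp\sigma_n$. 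By construction $(\e_{t_i^n})_\sharp\ppi_n=\mu_{t_i^n}$ for every $i$, and since each affine piece has constant speed,
$$\int_0^1\!\!\int |\dot\gamma_t|^p\,\d\ppi_n(\gamma)\,\d t = \sum_{i=0}^{2^n-1}\frac{W_p^p(\mu_{t_i^n},\mu_{t_{i+1}^n})}{h^{p-1}} \leq \int_0^1|\dot\mu_t|^p\,\d t,$$
the last step by the definition of absolute continuity and Jensen on each subinterval.

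Next I would argue compactness: the uniform $L^p$-energy bound above, combined with $W_p$-precompactness of the family $\{(\e_t)_\sharp\ppi_n\}$, yields tightness of $(\ppi_n)$ in $\prob{C([0,1],E)}$ through a standard Ascoli--Arzel\`a criterion (uniform equicontinuity in probability from the energy bound, tightness of time-marginals). Extracting a narrowly convergent subsequence to $\ppi$, lower semicontinuity of $\gamma\mapsto \int_0^1|\dot\gamma_t|^p\,\d t$ preserves the energy inequality. The identity $(\e_t)_\sharp\ppi=\mu_t$ holds by construction at dyadic $t$ and extends to every $t\in[0,1]$ by continuity of both sides in $W_p$. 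Finally, density of the dyadic set and closedness of $\iota(X)$ in $E$ force $\ppi$-a.e. curve to take values in $\iota(X)$, so $\ppi$ may be read as a Borel measure on $C([0,1],X)$ concentrated on $AC^p([0,1],X)$.

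For the pointwise energy equality I would use the standard comparison argument: for any $0\leq s<t\leq 1$, $(\e_s,\e_t)_\sharp\ppi\in\Gamma(\mu_s,\mu_t)$, so
$$W_p^p(\mu_s,\mu_t)\leq \int \sfd^p(\gamma_s,\gamma_t)\,\d\ppi(\gamma)\leq (t-s)^{p-1}\int_s^t\!\!\int |\dot\gamma_r|^p\,\d\ppi(\gamma)\,\d r$$
by Jensen applied curve by curve. Dividing by $(t-s)^p$ and letting $t\downarrow s$ at joint Lebesgue points gives $|\dot\mu_s|^p\leq \int |\dot\gamma_s|^p\,\d\ppi(\gamma)$ for a.e. $s$, and the integrated inequality from the construction step then forces equality a.e.

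The main obstacle is the construction step: in an arbitrary metric space there is no intrinsic device to interpolate between two given points, which is why I route the argument through an isometric embedding into a separable Banach space where affine interpolation is available. The delicate point is then to guarantee that the limit measure is concentrated on $X$-valued curves despite the approximating curves $\Phi_n$ leaving $\iota(X)$ between dyadic nodes; this is settled by closedness of $\iota(X)$ and the fact that dyadic times become dense, but it relies crucially on the separability assumption and on boundedness of $\sfd$ to avoid integrability pathologies in the embedding.
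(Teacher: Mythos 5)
The paper does not prove this proposition at all: it is imported verbatim from \cite{Lisini07} (with \cite[Theorem~8.2.1]{Ambrosio-Gigli-Savare08} cited for the Euclidean case), so there is no in-paper argument to compare against. Your proposal reconstructs, correctly and in the right order, the standard proof of Lisini's theorem: dyadic time discretization, gluing of optimal plans, isometric embedding into a separable Banach space to make piecewise affine interpolation available, tightness and narrow compactness, lower semicontinuity of the $p$-energy, identification of the marginals on the dense dyadic set, and the two-sided energy comparison yielding \eqref{eq:Lisini} a.e. The one step you should not wave at is the tightness claim: in an infinite-dimensional Banach space $E$ the sublevels of $\gamma\mapsto\int_0^1|\dot\gamma_t|^p\,\d t$ are \emph{not} compact in $C([0,1],E)$, so the energy bound alone gives only equicontinuity; you must combine it with relative compactness of the time-marginals of $\ppi_n$ at \emph{all} times, and at non-dyadic times these marginals are not among the $\mu_t$ but are laws of chords $(1-\lambda)\gamma_{t_i^n}+\lambda\gamma_{t_{i+1}^n}$. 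This is fixable exactly as you hint (a compact $K\subset X$ with $\inf_t\mu_t(K)\geq 1-\ep$, available since $\{\mu_t\}_{t\in[0,1]}$ is $W_p$-compact, yields the compact set $\{(1-\lambda)x+\lambda y:\ x,y\in K,\ \lambda\in[0,1]\}\subset E$ carrying most of the mass of every interpolated marginal), but it is the genuinely delicate point of the construction and deserves to be written out. With that caveat, the argument is sound.
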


\subsection{$\Gamma$-convergence}

\begin{definition} Let $(X,\sfd)$ be a metric space and let $F_h:X\to [-\infty,+\infty]$. 
We say that $F_h$ $\Gamma$-converge to $F:X\to [-\infty,+\infty]$ if:
\begin{enumerate}
\item[(a)] For every sequence $(u_h)\subset X$ convergent to $u\in X$ we have
$$F(u)\leq \liminf_{h\to \infty} F_h(u_h);$$
\item[(b)] For all $u\in X$ there exists a sequence $(u_n)\subset X$ such that 
$$F(u)\geq \limsup_{h\to \infty} F_h(u_h).$$
\end{enumerate}
\end{definition}

Sequences satisfying the second property are called ``recovery sequences''; whenever $\Gamma$-convergence occurs,
they obviously satisfy $\lim_h F_h(u_h)=F(u)$.

The following compactness property of $\Gamma$-convergence (see for instance \cite[Theorem 8.5]{DalMaso93}) 
is well-known.

\begin{proposition}\label{prop:Gammacompact}
If $(X,\sfd)$ is separable, any sequence of functionals $F_h:X\to [-\infty,+\infty]$ admits a $\Gamma$-convergent
subsequence.
\end{proposition}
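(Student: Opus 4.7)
The plan is to reduce $\Gamma$-convergence to a countable family of one-dimensional convergences through the neighborhood-based characterization of the $\Gamma$-lower and $\Gamma$-upper limits, and then extract a convergent subsequence by Cantor diagonalization, using separability of $(X,\sfd)$. Introduce the functions
$$
F'(u):=\sup_{U\ni u}\liminf_{h\to\infty}\inf_U F_h,\qquad F''(u):=\sup_{U\ni u}\limsup_{h\to\infty}\inf_U F_h,
$$
where $U$ ranges over open neighborhoods of $u$. The first step is to recall the standard fact (see \cite{DalMaso93}) that $(F_h)$ $\Gamma$-converges to some $F$ if and only if $F'=F''$, in which case $F$ equals this common value. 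The inequality $F\leq F'$ is an immediate reformulation of condition (a) in the definition, while the reverse is obtained by the usual diagonal construction of a recovery sequence: given a decreasing sequence of open neighborhoods $U_k$ shrinking to $\{u\}$, one picks $u_h\in U_{k(h)}$ with $F_h(u_h)\leq\inf_{U_{k(h)}}F_h+1/h$ for a suitably slowly-growing index $k(h)$.

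Separability of $(X,\sfd)$ then yields a countable base $\{B_n\}_{n\in\mathbb N}$ of the topology, for example the open balls of rational radius centered at a countable dense subset. For each fixed $n$ the extended-real sequence $\bigl(\inf_{B_n}F_h\bigr)_h$ lies in the compact space $[-\infty,+\infty]$, hence admits a convergent subsequence; a Cantor diagonal extraction then produces a single subsequence $(F_{h_k})$ along which
$$
a_n:=\lim_{k\to\infty}\inf_{B_n}F_{h_k}
$$
exists in $[-\infty,+\infty]$ simultaneously for every $n\in\mathbb N$. Because every open neighborhood $U$ of a point $u$ contains some basis element $B_n$ with $u\in B_n\subset U$ and $\inf_U F_h\leq\inf_{B_n}F_h$, the suprema in the definitions of $F'$ and $F''$ at $u$ can equivalently be restricted to $\{B_n:u\in B_n\}$. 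Along $(F_{h_k})$ the $\liminf$ and $\limsup$ agree on each basis element, so
$$
F'(u)=\sup\{a_n:u\in B_n\}=F''(u)\qquad\forall u\in X,
$$
and the first step delivers the $\Gamma$-convergence of $(F_{h_k})$ to this common function.

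The argument is essentially a topological Bolzano--Weierstrass, so I do not expect a genuine obstacle. The one point that requires care is the equivalence between the sequential definition of $\Gamma$-convergence given in the paper and the neighborhood-based characterization of $F'$ and $F''$; this equivalence uses in an essential way that open neighborhoods in a metric space admit a countable local base, which is precisely what allows the diagonal selection of a recovery sequence and, at the same time, the reduction of the proof to the countable collection $\{B_n\}$.
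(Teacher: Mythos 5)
Your proposal is correct and follows essentially the same route as the paper: extract, by a diagonal argument over a countable base $\{B_n\}$, a subsequence along which $\inf_{B_n}F_{h_k}$ converges for every $n$, and identify the $\Gamma$-limit as $u\mapsto\sup\{a_n:\,u\in B_n\}$. The only difference is that you spell out, via the $F'=F''$ characterization, the step the paper dismisses as ``easily seen''.
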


We quickly sketch the proof, for the reader's convenience. If $\{U_i\}_{i\in\N}$ is a countable basis of open
sets of $(X,\sfd)$, we may extract a subsequence $h(k)$ such that $\alpha_i:=\lim_k\inf_{U_i}F_{h(k)}$ exists in $\overline{\R}$ for
all $i\in\N$. Then, it is easily seen that
$$
F(x):=\sup_{U_i\ni x}\alpha_i\qquad x\in X
$$
is the $\Gamma$-limit of $F_{h(k)}$.

We will also need an elementary stability property of uniformly convex (and quadratic as well) functionals under
$\Gamma$-convergence. Recall that a positively $1$-homogeneous function $\Nn$ on a vector space $V$ is uniformly convex with modulus $\omega$ 
if there exists a function $\omega:[0,\infty)\to [0,\infty)$ with $\omega>0$ on $(0,\infty)$ such that 
$$\Nn(u)=\Nn(v)=1\qquad\Longrightarrow\qquad\Nn\left(\frac{u+v}{2}\right) \leq 1-\omega(\Nn(u-v))$$
for all $u,\,v\in V$.

\begin{lemma}\label{lemma:unif-conv}
Let $V$ be a normed space with the induced metric structure and let 
$\omega:[0,\infty)\to [0,\infty)$ be continuous, nondecreasing, positive on $(0,\infty)$. 
Let $\Nh$ be uniformly convex positively $1$-homogeneous functions on $V$ with the same modulus $\omega$, $\Gamma$-convergent to
some function $\Nn$. Then $\Nn$ is positively $1$-homogeneous and uniformly convex with modulus $\omega$.
\end{lemma}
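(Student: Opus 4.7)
\smallskip\noindent\textbf{Proof plan.} The whole argument is just a careful application of the definition of $\Gamma$-convergence, together with the continuity and monotonicity of $\omega$. The plan is to separate the two properties: first check that positive $1$-homogeneity passes to the $\Gamma$-limit, then check that the uniform convexity inequality does, using recovery sequences that I renormalize so that they lie on the unit ``sphere'' of $\Nh$.

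For positive $1$-homogeneity, fix $\lambda>0$ and $u\in V$ and take a recovery sequence $u_h\to u$ with $\Nh(u_h)\to\Nn(u)$. Since $\Nh$ is positively $1$-homogeneous, $\Nh(\lambda u_h)=\lambda\Nh(u_h)\to\lambda\Nn(u)$; the sequence $\lambda u_h\to\lambda u$ is admissible in the $\Gamma$-$\liminf$ inequality and gives $\Nn(\lambda u)\leq\lambda\Nn(u)$. Running the same argument with $\lambda u$ and scalar $1/\lambda$ yields the reverse inequality. In particular $\Nn(0)=0$.

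For uniform convexity, take $u,v\in V$ with $\Nn(u)=\Nn(v)=1$ and choose recovery sequences $u_h\to u$, $v_h\to v$. Then $\Nh(u_h)\to 1$ and $\Nh(v_h)\to 1$, so for $h$ large both are strictly positive and I can normalize:
$$
\tilde u_h:=\frac{u_h}{\Nh(u_h)},\qquad \tilde v_h:=\frac{v_h}{\Nh(v_h)}.
$$
By $1$-homogeneity $\Nh(\tilde u_h)=\Nh(\tilde v_h)=1$, while $\tilde u_h\to u$ and $\tilde v_h\to v$ in the norm topology. Uniform convexity of $\Nh$ with modulus $\omega$ gives
$$
\Nh\Bigl(\frac{\tilde u_h+\tilde v_h}{2}\Bigr)\leq 1-\omega\bigl(\Nh(\tilde u_h-\tilde v_h)\bigr).
$$
Since $(\tilde u_h+\tilde v_h)/2\to(u+v)/2$ and $\tilde u_h-\tilde v_h\to u-v$, the $\Gamma$-$\liminf$ inequality gives on the left $\Nn((u+v)/2)\leq\liminf_h\Nh((\tilde u_h+\tilde v_h)/2)$ and on the right $\liminf_h\Nh(\tilde u_h-\tilde v_h)\geq\Nn(u-v)$.

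The main (and only slightly subtle) step is to push $\omega$ through the $\liminf$. Since $\omega$ is continuous and nondecreasing on $[0,\infty)$, for every $\varepsilon>0$ one has $\Nh(\tilde u_h-\tilde v_h)\geq\Nn(u-v)-\varepsilon$ for $h$ large, hence
$$
\liminf_h\omega\bigl(\Nh(\tilde u_h-\tilde v_h)\bigr)\geq\omega\bigl(\Nn(u-v)-\varepsilon\bigr),
$$
and letting $\varepsilon\downarrow 0$ by continuity gives $\liminf_h\omega(\Nh(\tilde u_h-\tilde v_h))\geq\omega(\Nn(u-v))$. Combining everything,
$$
\Nn\Bigl(\frac{u+v}{2}\Bigr)\leq 1-\limsup_h\omega\bigl(\Nh(\tilde u_h-\tilde v_h)\bigr)\leq 1-\omega\bigl(\Nn(u-v)\bigr),
$$
which is the desired uniform convexity of $\Nn$ with the same modulus $\omega$. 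I expect no serious obstacle beyond the need to renormalize the recovery sequences so that $\Nh(\tilde u_h)=\Nh(\tilde v_h)=1$ exactly, and the monotone-continuity argument above for interchanging $\omega$ with $\liminf$.
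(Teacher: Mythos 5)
Your proof is correct and follows essentially the same route as the paper's: the same normalization of the recovery sequences to the unit sphere of $\Nh$, the $\Gamma$-$\liminf$ inequality applied to $(\tilde u_h+\tilde v_h)/2$ and $\tilde u_h-\tilde v_h$, and the same monotonicity-plus-continuity argument to pass $\omega$ through the $\liminf$ (the paper phrases this via superadditivity of $\liminf$, but it is the identical estimate). The only addition is that you spell out the $1$-homogeneity step, which the paper dismisses as trivial.
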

\begin{proof} The verification of $1$-homogeneity of $\Nn$ is trivial.
Let $u,\,v\in V$ which satisfy $\Nn(u)= \Nn(v)=1$. Let $(u_h)$ and $(v_h)$ be recovery sequences for $u$ and $v$ respectively,
so that both $\Nh(u_h)$ and $\Nh(v_h)$ converge to 1. Hence, $u_h'=u_h/\Nh(u_h)$ and $v_h'=v_h/\Nh(v_h)$ still converge to $u$ and $v$
respectively. By assumption
$$
\Nh\left(\frac{u_h'+v_h'}{2}\right)+\omega(\Nh(u_h'-v_h'))\leq 1. 
$$
Thanks to property (a) of $\Gamma$-convergence, the monotonicity and the continuity of $\omega$ and  the superadditivity
of $\liminf$ we get
\[
\begin{split}
 \Nn\left(\frac{u+v}{2}\right)+
\omega \left(\Nn(u-v) \right)
&\leq
\liminf_{h\to \infty}  \Nh\left(\frac{u_h'+v_h'}{2}\right)+
\omega\left(\liminf_{h\to \infty}\Nh(u_h'-v_h') \right) \\
&\leq \liminf_{h\to \infty}  \left(\Nh\left(\frac{u_h'+v_h'}{2}\right)+\omega(\Nh(u_h'-v_h')) \right)\leq 1.
\end{split}
\]
\end{proof}

 \subsection{Doubling metric measure spaces and maximal functions}

Recall that a metric space $(X,\sfd)$ is doubling if there exists a natural number $c_D$ such that every ball of radius $r$ can be covered by 
at most $c_D$ balls of halved radius $r/2$. While this condition will be sufficient to establish reflexivity of the Sobolev spaces, in the
proof of lower semicontinuity of slope we shall actually need a stronger condition, involving also the reference measure $\mm$:

\begin{definition}[Doubling m.m. spaces] \label{dfn:Doubling} The metric measure space $(X, \sfd, \mm)$ is doubling if there exists $\tilde{c}_D\geq 0$ such that
\begin{equation}\label{eqn:doubll}
\mm (B(x,2r)) \leq \tilde{c}_D \mm (B(x,r)) \qquad \forall x \in\supp\mm, \,\,r>0.
\end{equation}
\end{definition}
This condition is easily seen to be equivalent to the existence of two real positive numbers $ \alpha , \,\beta>0$ which depend only on 
$\tilde{c}_D$ such that
\begin{equation}\label{eqn:doubl}
\mm (B(x,r_1)) \leq \beta  \left( \frac {r_1}{r_2} \right)^\alpha \mm (B(y,r_2) )
\quad\text{whenever $B(y,r_2) \subset B(x,r_1)$, $r_2\leq r_1$, $y\in\supp\mm$.}
\end{equation}
Indeed, $B(x,r_1)\subset B(y,2r_1)$, hence $\mm(B(x,r_1))\leq c_D^k\mm(B(y,r_2))$, where $k$ is the smallest
integer such that $2r_1\leq 2^k r_2$.  Since $k\leq 2+\ln_2(r_1/r_2)$, we obtain \eqref{eqn:doubl} with
$\alpha=\ln_2c_D$ and $\beta=c_D^2$. 

Condition \eqref{eqn:doubl} is stronger than the metric doubling property, in the sense that $(\supp\mm,\sfd)$ is doubling whenever
$(X,\sfd,\mm)$ is. Indeed, given a ball $B(x,r)$, let us choose recursively points $x_i\in B(x,r)\cap\supp\mm$ with $\sfd(x_i,x_j)\geq r/2$,
and assume that this is possible for $i=1,\ldots,N$. Then, the balls $B(x_i,r/4)$ are disjoint and
$$
\mm\bigl(B\bigl(x_i,\frac r 4\bigr)\bigr)\geq \tilde{c}_D^{-3}\mm(B(x_i, 2r))\geq c_D^{-3}\mm(B(x,r)),
$$
so that $N\leq \tilde{c}_D^3$. It follows that $(X,\sfd)$ is doubling, with doubling constant $c_D\leq\tilde{c}_D^3$.
 Conversely (but we shall not need this fact) any compact doubling metric space supports a nontrivial doubling measure.

\begin{definition}[Local maximal function]\label{dfn:maxfun} Given $q\in [1,\infty)$, $\ep>0$ and 
a Borel function $f:X\to\R$ such that $|f|^q$ is $\mm$-integrable on bounded sets,
we define the $\ep$-maximal function 
$$M^{\ep}_q f (x):= \left( \sup_{ 0<r \leq \ep} \fint_{B(x,r)} |f|^q \,\d\mm \right)^{1/q}\qquad x\in\supp\mm.$$
\end{definition}

The function $M^{\ep}_q f (x)$ is nondecreasing w.r.t. $\ep$, moreover $M^{\ep}_q f (x) \searrow |f|(x)$ at any Lebesgue point
$x$ of $|f|^q$, namely a point $x\in\supp\mm$ satisfying
\begin{equation}\label{eqn:leb11} 
\lim_{r \downarrow 0} \frac 1{ \mm(B(x,r)) } \int_{ B(x,r)} |f(y)|^q\,\d\mm(y) = |f(x) |^q.
\end{equation}

We recall that, in doubling metric measure spaces (see for instance \cite{Heinonen07}), under the previous assumptions on $f$ we have that
$\mm$-a.e. point is a Lebesgue point of $|f|^q$ (the proof is based on the so-called Vitali covering lemma). By applying this
property to $|f-s|^q$ with $s\in\Q$ one even obtains
\begin{equation}\label{eqn:leb1} 
\lim_{r \downarrow 0} \frac 1{ \mm(B(x,r)) } \int_{ B(x,r)} |f(y)-f(x)|^q\,\d\mm(y) = 0
\end{equation}
for $\mm$-a.e. $x\in\supp\mm$. We shall need a further enforcement of the Lebesgue point property:

\begin{lemma}\label{lem:leb} Let $(X,\sfd ,\mm)$ be a doubling metric measure space and let $f:X\to\R$ be a Borel
function such that  $|f|^q$ is $\mm$-integrable on bounded sets. Then, at any point $x$ where \eqref{eqn:leb1} is
satisfied, it holds
\begin{equation}\label{eqn:leb2} 
\lim_{n \to \infty} \frac 1{\mm (E_n)} \int_{E_n} |f(y) - f(x) |^q \,\d\mm (y) = 0
\end{equation}
whenever $E_n\subset X$ are Borel sets satisfying $B(y_n,\tau r_n) \subset E_n \subseteq B(x,r_n)$
with $y_n\in\supp\mm$ and $r_n \to 0$, for some $\tau\in (0,1]$ independent of $n$.  In particular $\fint_{E_n} f \,\d\mm\to f(x)$.
\end{lemma}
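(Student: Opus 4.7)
\medskip
\noindent\textbf{Plan of proof.} The plan is to reduce the average over $E_n$ to an average over the enclosing ball $B(x,r_n)$, exploiting the two-sided control $B(y_n,\tau r_n)\subset E_n\subset B(x,r_n)$ together with the doubling inequality \eqref{eqn:doubl}. Since $|f(\cdot)-f(x)|^q\geq 0$ and $E_n\subset B(x,r_n)$, I would first write
$$
\frac{1}{\mm(E_n)}\int_{E_n}|f(y)-f(x)|^q\,\d\mm(y)
\le\frac{\mm(B(x,r_n))}{\mm(E_n)}\cdot\frac{1}{\mm(B(x,r_n))}\int_{B(x,r_n)}|f(y)-f(x)|^q\,\d\mm(y).
$$
The second factor tends to $0$ by the hypothesis \eqref{eqn:leb1}, so the whole task reduces to showing that the ratio $\mm(B(x,r_n))/\mm(E_n)$ is bounded uniformly in $n$.

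For that ratio, the lower bound $\mm(E_n)\ge\mm(B(y_n,\tau r_n))$ is immediate from $B(y_n,\tau r_n)\subset E_n$. The inclusion $B(y_n,\tau r_n)\subset B(x,r_n)$, the condition $\tau r_n\le r_n$ (since $\tau\in(0,1]$), and the assumption $y_n\in\supp\mm$ place us exactly in the setting of \eqref{eqn:doubl}, and so
$$
\mm(B(x,r_n))\le\beta\,\tau^{-\alpha}\,\mm(B(y_n,\tau r_n))\le\beta\,\tau^{-\alpha}\,\mm(E_n).
$$
Combining the two inequalities gives \eqref{eqn:leb2} with constant $\beta\tau^{-\alpha}$, which is independent of $n$.

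The convergence $\fint_{E_n} f\,\d\mm\to f(x)$ then follows by an application of Jensen's inequality in the form
$$
\Bigl|\fint_{E_n} f\,\d\mm-f(x)\Bigr|\leq\fint_{E_n}|f(y)-f(x)|\,\d\mm(y)
\leq\Bigl(\fint_{E_n}|f(y)-f(x)|^q\,\d\mm(y)\Bigr)^{1/q},
$$
which vanishes by \eqref{eqn:leb2}. I do not expect a genuine obstacle here: the only point demanding care is verifying that the hypotheses of \eqref{eqn:doubl} are satisfied when comparing the two balls with distinct centres $x$ and $y_n$, and this is guaranteed precisely because the ``small'' ball is centred at a point of $\supp\mm$ and is contained in the ``large'' one.
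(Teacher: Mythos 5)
Your proof is correct and follows essentially the same route as the paper's: bound $\mm(E_n)$ from below by $\mm(B(y_n,\tau r_n))$, compare the latter to $\mm(B(x,r_n))$ via the doubling inequality \eqref{eqn:doubl} to get the uniform constant $\beta\tau^{-\alpha}$, reduce to the Lebesgue-point hypothesis \eqref{eqn:leb1}, and finish with Jensen's inequality. Nothing is missing.
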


\begin{proof}  Since $\mm$ is doubling we can use \eqref{eqn:doubl} to obtain
\begin{align*}
\frac 1{ \mm ( E_n) } \int_{E_n} |f(y) - f(x) |^q \,\d\mm (y) & \leq \frac 1{ \mm ( B(y_n,\tau r_n))} \int_{E_n} |f(y) - f(x) |^q \, \d\mm (y) \\
& \leq \frac 1{\mm(B(y_n,\tau r_n))} \int_{B(x,r_n)} |f(y) - f(x) |^p \, \sfd \mm (y) \\
& \leq \frac{\mm(B(x,r_n))}{\mm(B(y_n,\tau r_n))} \fint_{B_{r_n}(x)} |f(y) - f(x) |^p \, \sfd \mm (y) \\
& \leq \beta \tau^{-\alpha} \fint_{B(x,r_n)} |f(y) - f(x) |^q \, \sfd \mm (y). \\
\end{align*}
By \eqref{eqn:leb1} the last term goes to $0$ at $q$-Lebesgue points of $f$ and we proved \eqref{eqn:leb2}. Finally, by Jensen's inequality, 
$$ \left| \fint_{E_n} f \,\d\mm - f(x) \right|^q \leq\fint_{E_n} |f- f(x)|^q \, \d\mm\rightarrow 0. $$
\end{proof}

\section{Hopf-Lax formula and Hamilton-Jacobi equation}\label{sec:hopflax}

Aim of this section is to study the properties of the Hopf-Lax
formula in a metric space $(X,\sfd)$ and its relations with the
Hamilton-Jacobi equation. Notice that there is no reference
measure $\mm$ here and that not even completeness is needed for the
results of this section. We fix a power $p\in (1,\infty)$ and denote by
$q$ its dual exponent.

Let $f:X\to\R$ be a Lipschitz function. For $t>0$ define
\begin{equation}\label{eq:Nicola1}
F(t,x,y):=f(y)+\frac{\sfd^p(x,y)}{pt^{p-1}},
\end{equation}
and the function $Q_tf:X\to\R$ by
\begin{equation}\label{eq:Nicola2}
Q_tf(x):=\inf_{y\in X}F(t,x,y).
\end{equation}
Also, we introduce the functions $D^+,\,D^-:X\times(0,\infty)\to\R$
as
\begin{equation}\label{eq:defdpm}
\begin{split}
D^+(x,t)&:=\sup\, \limsup_{n\to\infty}\sfd(x,y_n),\\
D^-(x,t)&:=\inf\, \liminf_{n\to\infty}\sfd(x,y_n),\\
\end{split}
\end{equation}
where, in both cases, the sequences $(y_n)$ vary among all minimizing sequences for 
$F(t,x,\cdot)$. We also set $Q_0f=f$ and $D^\pm(x,0)=0$. Arguing as
in \cite[Lemma~3.1.2]{Ambrosio-Gigli-Savare08} it is easy to check
that the map $[0,\infty)\ni(t,x)\mapsto Q_tf(x)$ is continuous.
Furthermore, the fact that $f$ is Lipschitz easily yields
\begin{equation}
\label{eq:boundD} D^-(x,t)\leq D^+(x,t)\leq t(p\Lip(f))^{1/(p-1)}.
\end{equation}

\begin{proposition}[Monotonicity of $D^\pm$]\label{prop:dmon}
For all $x\in X$ it holds 
\begin{equation}\label{eq:basic_mono}
D^+(x,t)\leq D^-(x,s)\qquad 0\leq t< s.
\end{equation}
As a consequence, $D^+(x,\cdot)$ and $D^-(x,\cdot)$ are both
nondecreasing, and they coincide with at most countably many
exceptions in $[0,\infty)$.
\end{proposition}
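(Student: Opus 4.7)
The plan is to derive everything from a single two-competitor inequality obtained by using each near-minimizer at time $t$ as a test point in the minimization problem at time $s$ and vice versa. The crucial fact exploited is that for $p>1$ the function $t\mapsto t^{1-p}$ is strictly decreasing on $(0,\infty)$, so the ``cross-terms'' do not cancel and have a definite sign.

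Concretely, for fixed $0<t<s$ and any $y,z\in X$, set $\epsilon_y := F(t,x,y)-Q_tf(x)\ge0$ and $\delta_z := F(s,x,z)-Q_sf(x)\ge0$. Since $Q_tf(x)\le F(t,x,z)$ and $Q_sf(x)\le F(s,x,y)$, one has
$$
f(y)+\frac{\sfd^p(x,y)}{pt^{p-1}}\leq f(z)+\frac{\sfd^p(x,z)}{pt^{p-1}}+\epsilon_y,
\qquad
f(z)+\frac{\sfd^p(x,z)}{ps^{p-1}}\leq f(y)+\frac{\sfd^p(x,y)}{ps^{p-1}}+\delta_z.
$$
Adding these two inequalities and cancelling the $f$-terms gives, after rearrangement,
$$
\bigl(\sfd^p(x,y)-\sfd^p(x,z)\bigr)\cdot\frac{1}{p}\left(\frac{1}{t^{p-1}}-\frac{1}{s^{p-1}}\right)\leq \epsilon_y+\delta_z,
$$
and since the parenthesis on the left is strictly positive we obtain
$$
\sfd^p(x,y)-\sfd^p(x,z)\leq C_{t,s}(\epsilon_y+\delta_z),\qquad C_{t,s}:=\frac{p}{t^{1-p}-s^{1-p}}.
$$

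Next I would apply this with $y=y_n$ along an arbitrary minimizing sequence for $F(t,x,\cdot)$ and $z=z_m$ along an arbitrary minimizing sequence for $F(s,x,\cdot)$, so that $\epsilon_{y_n},\delta_{z_m}\to0$. Letting first $n\to\infty$ at fixed $m$, one gets $\limsup_n \sfd^p(x,y_n)\leq \sfd^p(x,z_m)+C_{t,s}\delta_{z_m}$, and then choosing a subsequence $m_k$ with $\sfd(x,z_{m_k})\to\liminf_m\sfd(x,z_m)$ yields $\limsup_n\sfd(x,y_n)\leq \liminf_m\sfd(x,z_m)$. Taking the supremum over the $t$-minimizing sequences and the infimum over the $s$-minimizing sequences produces exactly \eqref{eq:basic_mono}; the case $t=0$ is trivial from \eqref{eq:boundD} and the convention $D^\pm(x,0)=0$.

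For the consequence, since $D^-(x,\cdot)\leq D^+(x,\cdot)$ by definition, the chain $D^-(x,t)\leq D^+(x,t)\leq D^-(x,s)\leq D^+(x,s)$ for $t<s$ shows both functions are nondecreasing. Moreover, at any $t$ at which the nondecreasing function $D^-(x,\cdot)$ is right-continuous one has $D^+(x,t)\leq \lim_{s\downarrow t}D^-(x,s)=D^-(x,t)$, hence $D^+(x,t)=D^-(x,t)$; since a monotone function has at most countably many discontinuities, this gives equality outside a countable set. The only real obstacle is the bookkeeping with arbitrary minimizing sequences (one must avoid assuming existence of true minimizers), but the two-competitor estimate above is uniform enough in $y,z$ that it survives passing to $\limsup$/$\liminf$ along independent sequences via a subsequence argument.
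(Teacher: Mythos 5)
Your proof is correct and follows essentially the same route as the paper's: the same pair of cross-competitor inequalities, added so that the $f$-terms cancel, with the sign of $t^{1-p}-s^{1-p}>0$ giving $\sfd^p(x,y)-\sfd^p(x,z)\leq C_{t,s}(\epsilon_y+\delta_z)$, followed by the same right-continuity argument for the countable exceptional set. The only difference is cosmetic bookkeeping: the paper fixes $\ep$-minimizers that nearly realize $D^+(x,t)$ and $D^-(x,s)$, while you run arbitrary minimizing sequences and pass to $\limsup/\liminf$ via a subsequence, which amounts to the same thing.
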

\begin{proof} Fix $x\in X$. For $t=0$ there is nothing to prove. Now pick $0<t<s$
and for every $\ep>0$ choose $x_{t,\ep}$ and $x_{s,\ep}$ minimizers up to $\ep$ of $F(t,x,\cdot)$ and
$F(s,x,\cdot)$ respectively, namely such that  $F(t,x,x_{t,\ep}) - \ep \leq F(t,x, w) $ and $F(s,x,x_{s,\ep}) - \ep \leq F(s,x, w) $ for 
every $w\in X$. 
Let us assume that $\sfd(x,x_{t,\ep}) \geq D^+(x,t)-\ep$ and $
\sfd(x,x_{s,\ep})\leq D^-(x,s)+\ep$. The minimality up to $\ep$ of $x_{t,\ep},\,x_{s,\ep}$ gives
\[
\begin{split}
f(x_{t,\ep})+\frac{\sfd^p(x_{t,\ep},x)}{pt^{p-1}}&\leq f(x_{s,\ep})+\frac{\sfd^p(x_{s,\ep},x)}{pt^{p-1}} +\ep \\
f(x_{s,\ep})+\frac{\sfd^p(x_{s,\ep},x)}{ps^{p-1}}&\leq
f(x_{t,\ep})+\frac{\sfd^p(x_{t,\ep},x)}{ps^{p-1}}+\ep .
\end{split}
\]
Adding up and using the fact that $\tfrac1t\geq\tfrac 1s$ we deduce
\[
(D^+(x,t)-\ep)^p \leq \sfd^p(x_{t,\ep},x)\leq \sfd^p(x_{s,\ep},x)+\ep (t^{1-p}-s^{1-p})^{-1} \leq  (D^-(x,s)+\ep)^p + \ep (t^{1-p}-s^{1-p})^{-1}.
\]
Letting $\ep\to 0$ we obtain \eqref{eq:basic_mono}. Combining this with the inequality
$D^-\leq D^+$ we immediately obtain that both functions are
nonincreasing. At a point of right continuity of $D^-(x,\cdot)$ we
get
$$
D^+(x,t)\leq\inf_{s>t}D^-(x,s)=D^-(x,t).
$$
This implies that the two functions coincide out of a countable set.
\end{proof}

Next, we examine the semicontinuity properties of $D^\pm$. These
properties imply that points $(x,t)$ where the equality
$D^+(x,t)=D^-(x,t)$ occurs are continuity points for both $D^+$ and
$D^-$.

\begin{proposition}[Semicontinuity of $D^\pm$]
$D^+$ is upper semicontinuous and $D^-$ is lower semicontinuous in
$X\times [0,\infty)$.
\end{proposition}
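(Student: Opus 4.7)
The plan is to prove both statements by a diagonal extraction argument: given a convergent sequence $(x_n,t_n)\to(x,t)$, I would build out of near-minimizers of each $F(t_n,x_n,\cdot)$ a single sequence $(z_n)$ which, on the one hand, is a genuine minimizing sequence for $F(t,x,\cdot)$ and, on the other, controls $D^+(x_n,t_n)$ from below or $D^-(x_n,t_n)$ from above.

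The concrete construction is as follows. Fix $(x_n,t_n)\to(x,t)$ and first assume $t>0$. By the definitions \eqref{eq:defdpm}, for every $n$ I can pick a minimizing sequence $(y_{n,k})_k$ for $F(t_n,x_n,\cdot)$ with
$\limsup_k\sfd(x_n,y_{n,k})\ge D^+(x_n,t_n)-1/n$ (respectively $\liminf_k\sfd(x_n,y_{n,k})\le D^-(x_n,t_n)+1/n$). Then I choose $k_n$ large enough so that, writing $z_n:=y_{n,k_n}$,
$$F(t_n,x_n,z_n)\le Q_{t_n}f(x_n)+\frac1n,\qquad \sfd(x_n,z_n)\ge D^+(x_n,t_n)-\frac2n$$
in the $D^+$ case, and $\sfd(x_n,z_n)\le D^-(x_n,t_n)+2/n$ in the $D^-$ case.

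The key step is to check that $(z_n)$ is a minimizing sequence for $F(t,x,\cdot)$. By \eqref{eq:boundD} one has $\sfd(x_n,z_n)\le t_n(p\Lip(f))^{1/(p-1)}+2/n$, which is bounded; hence $\sfd(x,z_n)$ is bounded too and $|\sfd^p(x,z_n)-\sfd^p(x_n,z_n)|\to 0$ since $\sfd$ is $1$-Lipschitz in each variable and $\sfd(x_n,x)\to 0$. Combined with $t_n^{1-p}\to t^{1-p}$ this gives
$$F(t,x,z_n)-F(t_n,x_n,z_n)=\frac{\sfd^p(x,z_n)}{pt^{p-1}}-\frac{\sfd^p(x_n,z_n)}{pt_n^{p-1}}\longrightarrow 0,$$
while the already-established continuity of $(t,x)\mapsto Q_tf(x)$ yields $Q_{t_n}f(x_n)\to Q_tf(x)$. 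Thus $F(t,x,z_n)\to Q_tf(x)$, so $(z_n)$ is a minimizing sequence for $F(t,x,\cdot)$, and by definition $\limsup_n\sfd(x,z_n)\le D^+(x,t)$ and $\liminf_n\sfd(x,z_n)\ge D^-(x,t)$. Combining with $|\sfd(x,z_n)-\sfd(x_n,z_n)|\le\sfd(x,x_n)\to 0$ and the two-sided control against $D^\pm(x_n,t_n)\pm 2/n$ built in above yields
$$\limsup_n D^+(x_n,t_n)\le D^+(x,t),\qquad \liminf_n D^-(x_n,t_n)\ge D^-(x,t).$$

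Finally, the case $t=0$ is handled directly from $D^\pm(x,0)=0$: for upper semicontinuity of $D^+$ the bound \eqref{eq:boundD} gives $D^+(x_n,t_n)\le t_n(p\Lip(f))^{1/(p-1)}\to 0$, while lower semicontinuity of the nonnegative function $D^-$ is automatic. The main obstacle I anticipate is precisely the verification that $(z_n)$ is minimizing for the limit problem, and this is exactly what forces the use of the uniform distance bound \eqref{eq:boundD} together with the continuity of $Q_tf$; any reasonable choice of $z_n$ other than one satisfying the simultaneous near-minimality and near-extremality conditions above would fail here.
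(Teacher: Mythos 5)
Your proof is correct and follows essentially the same route as the paper's: a diagonal extraction of near-minimizers of $F(t_n,x_n,\cdot)$ producing a single minimizing sequence for $F(t,x,\cdot)$, justified by the continuity of $(t,x)\mapsto Q_tf(x)$ and the uniform distance bound \eqref{eq:boundD}. The only differences are cosmetic: you make the diagonalization quantitative with explicit $1/n$ tolerances and spell out the $D^+$ and $t=0$ cases, which the paper dispatches as ``similar'' and ``trivial''.
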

\begin{proof}
We prove lower semicontinuity of $D^-$, the proof of upper
semicontinuity of $D^+$ being similar. Let $(x_i,t_i)$ be any
sequence converging to $(x,t)$ such that the limit of $D^-(x_i,t_i)$
exists and assume that $t>0$ (the case $t=0$ is trivial). For every
$i$, let $(y^n_i)$ be a minimizing sequence of $F(t_i,x_i,\cdot)$ for which
$\lim_n\sfd(y_i^n,x_i)=D^-(x_i,t_i)$, so that
\[
\lim_{n\to\infty} f(y^n_i)+\frac{\sfd^p(y^n_i,x_i)}{pt_i^{p-1}}=Q_{t_i}f(x_i).
\]
Using the continuity of $Q_t$ we get
\[
Q_t f(x)=
\lim_{i\to\infty} \lim_{n\to\infty} f(y_i^n)+\frac{\sfd^p(y^n_i,x_i)}{pt^{p-1}}
\geq\limsup_{i\to\infty} \limsup_{n\to\infty} f(y_i^n)+\frac{\sfd^p(y^n_i,x)}{pt^{p-1}}.
\]
Analogously
$$
\lim_{i\to\infty} D^-(x_i,t_i)=
\lim_{i\to\infty} \lim_{n\to\infty} \sfd (y_i^n,x_i)\geq
\limsup_{i\to\infty} \limsup_{n\to\infty} \sfd (y_i^n,x).
$$
Therefore by a diagonal argument we can find a minimizing sequence $(y_i^{n(i)})$ 
for $F(t,x,\cdot)$ with $\limsup_i\sfd(y_i^{n(i)},x)\leq\lim_iD^-(x_i,t_i)$, which gives the result.
\end{proof}

\begin{proposition}[Time derivative of
$Q_tf$]\label{prop:timederivative} The map $t\mapsto Q_tf$ is
Lipschitz from $[0,\infty)$ to $C(X)$ and, for all $x\in X$, it
satisfies: 
\begin{equation}\label{eq:Dini1}
\frac{\d}{\d
t}Q_tf(x)=-\frac{1}{q}\biggl[\frac{D^{\pm}(x,t)}{t}\biggr]^p,
\end{equation}
for any $t>0$, with at most countably many exceptions.
\end{proposition}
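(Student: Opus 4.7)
The plan is to prove the derivative identity \eqref{eq:Dini1} at each $t>0$ with $D^+(x,t)=D^-(x,t)$ by sandwiching the difference quotient between the values obtained from two carefully chosen $\varepsilon$-approximate minimizers; the Lipschitz regularity in $t$ then follows from the uniform bound on the resulting derivative via \eqref{eq:boundD}, complemented by a coercivity estimate at $t=0$.

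Fix $x\in X$ and $0<s<t$. Given $\varepsilon>0$, the definitions \eqref{eq:defdpm} allow us to pick an $\varepsilon$-minimizer $y_{t,\varepsilon}$ of $F(t,x,\cdot)$ with $\sfd(x,y_{t,\varepsilon})\leq D^-(x,t)+\varepsilon$, and an $\varepsilon$-minimizer $y_{s,\varepsilon}$ of $F(s,x,\cdot)$ with $\sfd(x,y_{s,\varepsilon})\geq D^+(x,s)-\varepsilon$. Using $F(t,x,y)-F(s,x,y)=\frac{\sfd^p(x,y)}{p}(t^{1-p}-s^{1-p})$ together with the trivial comparisons $Q_tf(x)\leq F(t,x,y_{s,\varepsilon})$ and $Q_sf(x)\geq F(s,x,y_{s,\varepsilon})-\varepsilon$, and their analogs with the roles of $s$ and $t$ exchanged, one obtains, after sending $\varepsilon\downarrow 0$ and recalling $t^{1-p}-s^{1-p}<0$,
\begin{equation*}
\frac{(D^-(x,t))^p}{p}(t^{1-p}-s^{1-p})\leq Q_tf(x)-Q_sf(x)\leq \frac{(D^+(x,s))^p}{p}(t^{1-p}-s^{1-p}).
\end{equation*}
At a $t$ with $D^+(x,t)=D^-(x,t)$, the monotonicity of Proposition~3.1 combined with the opposite one-sided semicontinuities of Proposition~3.2 forces both $D^\pm(x,\cdot)$ to be continuous at $t$. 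Dividing the sandwich by $t-s>0$, using $(t^{1-p}-s^{1-p})/(t-s)\to (1-p)t^{-p}$ with $(1-p)/p=-1/q$, and sending $s\uparrow t$, both bounds collapse to the same value and yield
\begin{equation*}
\lim_{s\uparrow t}\frac{Q_tf(x)-Q_sf(x)}{t-s}=-\frac{1}{q}\left(\frac{D^\pm(x,t)}{t}\right)^p.
\end{equation*}
A symmetric argument for $s\downarrow t$ gives the same right derivative, and Proposition~3.1 ensures that the set of exceptional $t$'s is at most countable, proving \eqref{eq:Dini1}.

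For the Lipschitz regularity in time, the bound \eqref{eq:boundD} forces the derivative above to be dominated by $(p\Lip(f))^{p/(p-1)}/q$ uniformly in $x\in X$ and $t>0$; substituting the $\sfd$-controls of \eqref{eq:boundD} directly into the sandwich yields a Lipschitz estimate for $t\mapsto Q_tf$ into $C(X)$ on every compact subinterval of $(0,\infty)$. The endpoint $t=0$ is treated separately: minimizing the pointwise lower bound $f(x)-\Lip(f)r+r^p/(ps^{p-1})$ over $r\geq 0$ gives $0\leq f(x)-Q_sf(x)\leq s\Lip(f)^{p/(p-1)}/q$ uniformly in $x$, yielding the Lipschitz estimate up to $t=0$. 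The main obstacle is ensuring that the two sides of the sandwich collapse to the same limit; this depends on the continuity of $D^\pm(x,\cdot)$ at $t$, which via monotonicity and semicontinuity is equivalent to the equality $D^+(x,t)=D^-(x,t)$, and this is precisely the reason why \eqref{eq:Dini1} holds only up to countably many exceptions.
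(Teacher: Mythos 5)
Your proposal is correct and follows essentially the same route as the paper: the same cross-comparison of $\varepsilon$-approximate minimizers at the two times to sandwich the difference quotient between $D^\pm$-expressions, the same use of the monotonicity/semicontinuity of $D^\pm$ to identify the limit at the (co-countably many) points where $D^+=D^-$, and the same use of \eqref{eq:boundD} plus the behaviour near $t=0$ to get the global Lipschitz bound. If anything, your version is slightly cleaner on the signs than the paper's displayed inequalities.
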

\begin{proof} Let $t<s$ and for every $\ep>0$ choose $x_{t,\ep}$ and $x_{s,\ep}$ minimizers up to $\ep$ of $F(t,x,\cdot)$ and
$F(s,x,\cdot)$ respectively, namely such that  $F(t,x,x_{t,\ep}) - \ep \leq F(t,x, w) $ and $F(s,x,x_{s,\ep}) - \ep \leq F(s,x, w) $ for 
every $w\in X$. 
Let us assume that $\sfd(x,x_{t,\ep}) \geq D^+(x,t)-\ep$ and $
\sfd(x,x_{s,\ep})\leq D^-(x,s)+\ep$.  We have
\[
\begin{split}
Q_sf(x)-Q_tf(x)&\leq F(s,x,x_{t,\ep})-F(t,x,x_{t,\ep}) +\ep=\frac{\sfd^p(x,x_{t,\ep})}{p}\frac{t^{p-1}-s^{p-1}}{t^{p-1}s^{p-1}}+\ep,\\
Q_sf(x)-Q_tf(x)&\geq
F(s,x,x_{s,\ep})-F(t,x,x_{s,\ep})+\ep=\frac{\sfd^p(x,x_{s,\ep})}{p}\frac{t^{p-1}-s^{p-1}}{t^{p-1}s^{p-1}}+\ep,
\end{split}
\]
For $\ep$ small enough, dividing by $s-t$ and using the definition of $x_{t,\ep}$ and $x_{s,\ep}$ we obtain
\[
\begin{split}
\frac{Q_sf(x)-Q_tf(x)}{s-t}&\leq \frac{( D^+(x,t)-\ep)^p }{qs^p}+\ep,\\
\frac{Q_sf(x)-Q_tf(x)}{s-t}&\geq
\frac{\sfd^p(x,x_{s,\ep})}{qt^{p}}+\ep,
\end{split}
\]
which gives as $\ep\to 0$ that $t\mapsto Q_tf(x)$ is Lipschitz in $(\delta,\infty)$
for any $\delta>0$ uniformly with respect to $x\in X$. Also, taking Proposition~\ref{prop:dmon} into account, we
get \eqref{eq:Dini1}. Now notice that from \eqref{eq:boundD} we get
that $q|\frac{\d}{\d t}Q_tf(x)|\leq p^q[\Lip(f)]^q$ for any $x\in X$
and a.e. $t$, which, together with the pointwise convergence of
$Q_tf$ to $f$ as $t\downarrow 0$, yields that $t\mapsto Q_tf\in
C(X)$ is Lipschitz in $[0,\infty)$.
\end{proof}

We will bound from above the slope of $Q_tf$ at $x$ with
$|D^+(x,t)/t|^{p-1}$; actually we shall prove a more precise statement,
which involves the \emph{asymptotic Lipschitz constant}
\begin{equation}\label{defn:asymlip}
{\rm Lip}_a(f,x):=\inf_{r>0}{\rm Lip}\bigl(f, B(x,r)\bigr)=\lim_{r\downarrow 0}{\rm Lip}\bigl(f, B(x,r)\bigr).
\end{equation}
We collect some properties of the asymptotic Lipschitz constant in the next proposition.

\begin{proposition}\label{prop:asymlip} Let $f:X\to\R$ be a Lipschitz function. Then
\begin{equation}\label{eq:asymlip}
{\rm Lip}(f)\geq {\rm Lip}_a(f,x)\geq |\nabla f|^*(x),
\end{equation}
where $|\nabla f|^*$ is the upper semicontinuous envelope 
of the slope of $f$. In length spaces the second inequality is an equality.
\end{proposition}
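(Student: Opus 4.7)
The first inequality $\Lip(f)\ge \Lip_a(f,x)$ is immediate from the definitions, since $\Lip(f,B(x,r))\le \Lip(f)$ for every $r>0$.

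For the second inequality $\Lip_a(f,x)\ge |\nabla f|^*(x)$, the plan is to bound $|\nabla f|$ uniformly on a small ball around $x$ by a Lipschitz constant on a slightly larger ball. Concretely, fix $r>0$ and pick any $y\in B(x,r/2)$. For every $z$ with $\sfd(z,y)<r/2$ one has $\sfd(z,x)<r$, so both $y$ and $z$ lie in $B(x,r)$ and
$$
\frac{|f(z)-f(y)|}{\sfd(z,y)}\le \Lip(f,B(x,r)).
$$
Taking $\limsup_{z\to y}$ gives $|\nabla f|(y)\le \Lip(f,B(x,r))$ for every $y\in B(x,r/2)$. Hence $\sup_{B(x,r/2)}|\nabla f|\le \Lip(f,B(x,r))$, and letting $r\downarrow 0$ yields $|\nabla f|^*(x)\le \Lip_a(f,x)$, since by definition $|\nabla f|^*(x)=\lim_{\rho\downarrow 0}\sup_{B(x,\rho)}|\nabla f|$.

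For the equality in length spaces, the plan is to go in the opposite direction using the upper gradient property: since $f$ is Lipschitz, the slope $|\nabla f|$ is an upper gradient, so in particular $|f(z)-f(y)|\le \int_\gamma |\nabla f|^*$ for every absolutely continuous curve $\gamma$ joining $y$ to $z$. Given $r>0$ and $\varepsilon>0$, for $\delta<r/(3+2\varepsilon)$ and any two points $y,z\in B(x,\delta)$ the length space assumption yields an absolutely continuous curve $\gamma$ from $y$ to $z$ of length at most $(1+\varepsilon)\sfd(y,z)$; by the triangle inequality every point of $\gamma$ lies in $B(x,r)$. Parametrizing by arc length one obtains
$$
|f(z)-f(y)|\le \ell(\gamma)\sup_{B(x,r)}|\nabla f|^*\le (1+\varepsilon)\sfd(y,z)\sup_{B(x,r)}|\nabla f|^*,
$$
hence $\Lip(f,B(x,\delta))\le (1+\varepsilon)\sup_{B(x,r)}|\nabla f|^*$. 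Letting $\delta\downarrow 0$, then $r\downarrow 0$ (using upper semicontinuity of $|\nabla f|^*$), and finally $\varepsilon\downarrow 0$, gives $\Lip_a(f,x)\le |\nabla f|^*(x)$.

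The only step that requires a little care is the length space argument: one must choose $\delta$ small enough relative to $r$ so that the near-geodesic curves do not leave $B(x,r)$, which is what the simple radius bookkeeping above achieves. The rest is a routine combination of the definitions of the asymptotic Lipschitz constant, of the upper semicontinuous envelope, and of the upper gradient inequality already recorded for slopes of locally Lipschitz functions.
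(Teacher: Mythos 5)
Your proof is correct and follows essentially the same route as the paper's: the second inequality comes from the fact that $\Lip_a(f,\cdot)$ is an upper semicontinuous majorant of $|\nabla f|$, and the length-space equality comes from integrating the upper gradient $|\nabla f|$ along near-geodesics. The only cosmetic difference is that you keep the connecting curves inside $B(x,r)$ by shrinking the inner radius $\delta$, whereas the paper lets them wander into $B(x,2r)$ and takes the supremum there; both choices of bookkeeping work.
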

\begin{proof} The first inequality in \eqref{eq:asymlip} is trivial, while the second one follows
by the fact that ${\rm Lip}_a(f,\cdot)$ is upper semicontinuous and larger than $|\nabla f|$.
Since $|\nabla f|$ is an upper gradient of $f$, we have the inequality
$$
|f(y)-f(z)|\leq\int_0^{\ell(\gamma)}|\nabla f|(\gamma_t)\,dt
$$
for any curve $\gamma$ with constant speed joining $y$ to $z$. If $(X,\sfd)$ is a length space we can 
minimize w.r.t. $\gamma$ to get
$$
{\rm Lip}\bigl(f, B(x,r))\leq \sup_{B_{2r}(x)}|\nabla f|\leq\sup_{B_{2r}(x)}|\nabla f|^* .
$$
As $r\downarrow 0$ the inequality ${\rm Lip}_a(f,x)\leq|\nabla f|^*(x)$ follows.
\end{proof}

\begin{proposition}[Bound on the asymptotic Lipschitz constant of $Q_tf$]\label{prop:slopesqt}
For $(x,t)\in X\times (0,\infty)$ it holds: 
\begin{equation}
\label{eq:hjbss} {\rm Lip}_a(Q_tf,x)\leq
\biggl[\frac{D^+(x,t)}t\biggr]^{p-1}.
\end{equation}
\end{proposition}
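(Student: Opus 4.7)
The plan is to estimate $\mathrm{Lip}(Q_tf, B(x,r))$ directly by a competitor argument and then let $r\downarrow 0$, invoking upper semicontinuity of $D^+(\cdot,t)$ (proved just above) to identify the limit.

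Fix $x\in X$, $t>0$, and let $y,z\in B(x,r)$. Given $\epsilon>0$, I would select an almost minimizer $y_\epsilon\in X$ for $F(t,y,\cdot)$ with the additional property that $\sfd(y,y_\epsilon)\leq D^+(y,t)+\epsilon$; such a point exists by definition of $D^+$. Using $y_\epsilon$ as a competitor at the point $z$ gives
$$Q_tf(z)-Q_tf(y)\leq F(t,z,y_\epsilon)-F(t,y,y_\epsilon)+\epsilon=\frac{\sfd^p(z,y_\epsilon)-\sfd^p(y,y_\epsilon)}{pt^{p-1}}+\epsilon.$$
The elementary inequality $b^p-a^p\leq p\,b^{p-1}(b-a)$ for $0\leq a\leq b$, together with $\sfd(z,y_\epsilon)\leq \sfd(y,y_\epsilon)+\sfd(y,z)\leq D^+(y,t)+\epsilon+2r$, then yields
$$Q_tf(z)-Q_tf(y)\leq\frac{(D^+(y,t)+\epsilon+2r)^{p-1}}{t^{p-1}}\,\sfd(y,z)+\epsilon.$$

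Next, I would swap the roles of $y$ and $z$ and let $\epsilon\downarrow 0$, obtaining
$$|Q_tf(z)-Q_tf(y)|\leq\frac{(M(x,r,t)+2r)^{p-1}}{t^{p-1}}\,\sfd(y,z),\qquad M(x,r,t):=\sup_{w\in B(x,r)}D^+(w,t),$$
for all $y,z\in B(x,r)$. This gives $\mathrm{Lip}(Q_tf,B(x,r))\leq t^{-(p-1)}(M(x,r,t)+2r)^{p-1}$.

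Finally, the upper semicontinuity of $D^+(\cdot,t)$ established in the preceding proposition implies $\lim_{r\downarrow 0}M(x,r,t)=D^+(x,t)$, so letting $r\downarrow 0$ in the previous bound produces exactly
$$\mathrm{Lip}_a(Q_tf,x)=\lim_{r\downarrow 0}\mathrm{Lip}(Q_tf,B(x,r))\leq\Bigl[\frac{D^+(x,t)}{t}\Bigr]^{p-1},$$
which is \eqref{eq:hjbss}. The only mildly subtle point is the choice of almost minimizer with the correct distance estimate: a generic $\epsilon$-minimizer is not enough, and one must exploit the definition of $D^+$ to ensure $\sfd(y,y_\epsilon)$ stays close to $D^+(y,t)$, so that the triangle inequality produces a quantity controlled by $D^+(\cdot,t)$ on the ambient ball $B(x,r)$ rather than by some uncontrolled distance.
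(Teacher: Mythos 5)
Your proposal is correct and follows essentially the same route as the paper's proof: an $\epsilon$-almost minimizer $y_\epsilon$ for $F(t,y,\cdot)$ with $\sfd(y,y_\epsilon)\leq D^+(y,t)+\epsilon$ used as a competitor at $z$, the elementary convexity inequality for $s\mapsto s^p$, symmetrization in $y,z$, and finally $r\downarrow 0$ together with the upper semicontinuity of $D^+(\cdot,t)$. Your explicit tracking of the $+2r$ term, which vanishes in the limit, is if anything slightly more careful than the paper's write-up.
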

In particular ${\rm Lip}(Q_t(f))\leq p{\rm Lip}(f)$.
\begin{proof}  
 Fix $y,\,z\in X$ and $t\in (0,\infty)$. For every $\ep>0$ let $y_\ep \in X$ be such that $F(t,y, y_\ep) - \ep \leq F(t,y, w) $ for 
every $w\in X$ and $|\sfd(y,y_{\ep}) -D^+(y,t)|\leq \ep$. Since it holds
\[
\begin{split}
Q_tf(z)-Q_tf(y)&\leq F(t,z, y_\ep)-F(t,y, y_\ep) +\ep=
f( y_\ep)+\frac{\sfd^p(z, y_\ep)}{pt^{p-1}}-f( y_\ep)-\frac{\sfd^p(y, y_\ep)}{pt^{p-1}} +\ep\\
&\leq
\frac{(\sfd(z,y)+\sfd(y, y_\ep))^p}{pt^{p-1}}-\frac{\sfd^p(y,y_\ep)}{pt^{p-1}}
 +\ep \\
&\leq \frac{\sfd(z,y)}{t^{p-1}}\bigl(\sfd(z,y)+D^+(y,t)+\ep\bigr)^{p-1} +\ep,
\end{split}
\]
so that letting $\ep \to 0$, dividing by $\sfd(z,y)$ and inverting the roles of $y$ and $z$ gives
$$
{\rm Lip}\bigl(Q_tf, B(x,r)\bigr)\leq t^{1-p}\bigl(\sup_{y\in  B(x,r)}D^+(y,t)\bigr)^{p-1}.
$$
Letting $r\downarrow 0$ and using the upper
semicontinuity of $D^+$ we get \eqref{eq:hjbss}.

Finally, the bound on the Lipschitz constant of $Q_tf$ follows
directly from \eqref{eq:boundD} and \eqref{eq:hjbss}.
\end{proof}

\begin{theorem}[Subsolution of HJ]\label{thm:subsol}
For every $x\in X$ it holds
\begin{equation}\label{eq:hjbsusbis}
\frac{\d}{\d t}Q_tf(x)+\frac{1}{q}{\rm Lip}_a^q(Q_tf,x)
\leq 0
\end{equation}
for every $t\in (0,\infty)$, with at most countably many exceptions.
\end{theorem}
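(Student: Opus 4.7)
The plan is to observe that Theorem~\ref{thm:subsol} is essentially an immediate consequence of the two preceding propositions, exploiting the conjugate-exponent identity $q(p-1) = p$.

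First, I would invoke Proposition~\ref{prop:timederivative} to obtain, for every $x \in X$ and for all $t \in (0,\infty)$ outside a countable set $N_x$, the identity
$$\frac{\d}{\d t}Q_tf(x) = -\frac{1}{q}\biggl[\frac{D^+(x,t)}{t}\biggr]^p,$$
choosing the $D^+$ representative (which coincides with $D^-$ off a countable set, so up to enlarging $N_x$ by countably many points this is legitimate).

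Next, from Proposition~\ref{prop:slopesqt} we have the pointwise bound
$${\rm Lip}_a(Q_tf, x) \leq \biggl[\frac{D^+(x,t)}{t}\biggr]^{p-1}$$
valid for all $t > 0$. Raising to the $q$-th power and using $q(p-1) = p$, which follows from $1/p + 1/q = 1$, gives
$$\frac{1}{q}{\rm Lip}_a^q(Q_tf, x) \leq \frac{1}{q}\biggl[\frac{D^+(x,t)}{t}\biggr]^{q(p-1)} = \frac{1}{q}\biggl[\frac{D^+(x,t)}{t}\biggr]^{p}.$$

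Finally, summing the two displays for $t \in (0,\infty) \setminus N_x$ produces the desired inequality
$$\frac{\d}{\d t}Q_tf(x) + \frac{1}{q}{\rm Lip}_a^q(Q_tf, x) \leq 0,$$
with at most countably many exceptions. There is no real obstacle here: both ingredients are already in place, and the only subtlety is the bookkeeping of the countable exceptional set, which is harmless since the union of the countable set from Proposition~\ref{prop:timederivative} with the countable set where $D^+(x,\cdot) \neq D^-(x,\cdot)$ (from Proposition~\ref{prop:dmon}) remains countable.
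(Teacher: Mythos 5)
Your proposal is correct and is exactly the argument of the paper, which likewise derives the theorem as a direct consequence of Proposition~\ref{prop:timederivative} and Proposition~\ref{prop:slopesqt}; you have merely spelled out the exponent identity $q(p-1)=p$ and the bookkeeping of the countable exceptional set, both of which are handled correctly.
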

\begin{proof}
The claim is a direct consequence of
Propositions~\ref{prop:timederivative} and
Proposition~\ref{prop:slopesqt}.
\end{proof}

Notice that \eqref{eq:hjbsusbis} is a stronger formulation of the HJ sub solution property
\begin{equation}\label{eq:hjbsus}
\frac{\d}{\d t}Q_tf(x)+\frac{1}{q}|\nabla f|^q(x)\leq 0,
\end{equation}
with the asymptotic Lipschitz constant ${\rm Lip}_a(Q_tf,\cdot)$ in place of
$|\nabla Q_t f|$.

\section{Weak gradients}\label{sec:weakgra}

Let $(X,\sfd)$ be a complete and separable metric space and let
$\mm$ be a nonnegative Borel measure in $X$ (not even $\sigma$-finiteness is needed
for the results of this section). In this
section we introduce and compare two notions of weak gradient, one obtained
by relaxation of the asymptotic Lipschitz constant, the other one obtained by
a suitable weak upper gradient property. Eventually we will show that the two
notions of gradient coincide: this will lead also to the coincidence with the
other intermediate notions of gradient considered in \cite{Cheeger00},
\cite{Koskela-MacManus}, \cite{Shanmugalingam00}, described in the appendix.

\subsection{Relaxed slope $|\nabla f|_{*,q}$}

The following definition is a variation of the one considered in \cite{Cheeger00} 
(where the relaxation procedure involved upper gradients) and of the one considered
in \cite{Ambrosio-Gigli-Savare11} (where the relaxation procedure involved slopes
of Lipschitz functions). The use of the (stronger) asymptotic Lipschitz constant has beed
suggested in the final section of \cite{Ambrosio-Gigli-Savare12}: it
is justified by the subsolution property \eqref{eq:hjbsusbis} and it leads to stronger density results.
In the spirit of the Sobolev space theory, these 
should be considered as ``$H$ definitions'', since 
approximation with Lipschitz functions with bounded support are involved.

\begin{definition}[Relaxed slope]\label{def:genuppergrad} We say that $g\in L^q(X,\mm)$ is a
$q$-relaxed slope of $f\in L^q(X,\mm)$ if there exist $\tilde{g}\in
L^q(X,\mm)$ and Lipschitz functions with bounded support $f_n$ such that:
\begin{itemize}
\item[(a)] $f_n\to f$ in $L^q(X,\mm)$ and ${\rm Lip}_a(f_n,\cdot)$ weakly converge to
$\tilde{g}$ in $L^q(X,\mm)$;
\item[(b)] $\tilde{g}\leq g$ $\mm$-a.e. in $X$.
\end{itemize}
We say that $g$ is the minimal $q$-relaxed slope of $f$ if its
$L^q(X,\mm)$ norm is minimal among $q$-relaxed slopes. We shall
denote by $\relgradq fq$ the minimal $q$-relaxed slope.
\end{definition}

By this definition and the sequential compactness of weak
topologies, any $L^q$ limit of Lipschitz functions $f_n$ with
$\int {\rm Lip}_a^q(f_n,\cdot)\,\d\mm$ uniformly bounded has a $q$-relaxed
slope. On the other hand, using Mazur's lemma (see
\cite[Lemma~4.3]{Ambrosio-Gigli-Savare11} for details), the
definition of $q$-relaxed slope would be unchanged if the weak
convergence of ${\rm Lip}_a(f_n,\cdot)$ in (a) were replaced by the condition
${\rm Lip}_a(f_n,\cdot)\leq g_n$ and $g_n\to\tilde{g}$ strongly in
$L^q(X,\mm)$. This alternative characterization of $q$-relaxed
slopes is suitable for diagonal arguments and proves, together with
\eqref{eq:subadd}, that the collection of $q$-relaxed slopes is a
closed convex set, possibly empty. Hence, thanks to the uniform
convexity of $L^q(X,\mm)$, the definition of $\relgradq fq$ is well
posed. Also, arguing as in \cite{Ambrosio-Gigli-Savare11} and using once more the uniform
convexity of $L^q(X,\mm)$, it is not difficult to show the following result:

\begin{proposition}\label{prop:easy}
If $f\in L^q(X,\mm)$ has a $q$-relaxed slope then there exist
Lipschitz functions $f_n$ with bounded support satisfying
\begin{equation}\label{densitylip1}
\lim_{n\to\infty}\int_X|f_n-f|^q\,\d\mm+\int_X\bigl|{\rm Lip}_a(f_n,\cdot)-|\nabla f|_{*,q}\bigr|^q\,\d\mm=0.
\end{equation}
\end{proposition}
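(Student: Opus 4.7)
The plan is to exploit the Radon–Riesz property of the uniformly convex space $L^q(X,\mm)$, namely that weak convergence together with convergence of norms upgrades to strong convergence. We must therefore produce an approximating sequence $f_n$ whose asymptotic Lipschitz constants converge \emph{weakly} to $\relgradq fq$ and whose $L^q$ norms converge to $\|\relgradq fq\|_q$; strong convergence will then follow automatically.

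First, invoking the alternative characterisation of $q$-relaxed slopes stated right after the definition (obtained via Mazur's lemma from the weak-convergence characterisation), we produce Lipschitz functions $f_n$ with bounded support such that $f_n\to f$ in $L^q(X,\mm)$ together with an auxiliary sequence $g_n\in L^q(X,\mm)$ satisfying $\Lip_a(f_n,\cdot)\le g_n$ and $g_n\to \relgradq fq$ \emph{strongly} in $L^q(X,\mm)$. (The fact that one may arrange the strong limit to be precisely the minimal relaxed slope, rather than a larger one, uses the uniqueness of the norm-minimiser in the convex set of relaxed slopes, which in turn follows from the strict convexity of $\|\cdot\|_q^q$ for $1<q<\infty$.)

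Second, the sequence $\Lip_a(f_n,\cdot)$ is bounded in $L^q(X,\mm)$ by $\|g_n\|_q$, so up to subsequences it converges weakly to some $h\in L^q(X,\mm)$. Testing the pointwise inequality $\Lip_a(f_n,\cdot)\le g_n$ against arbitrary nonnegative $\phi\in L^p(X,\mm)$ and passing to the limit (weak against strong) yields $h\le \relgradq fq$ $\mm$-a.e. On the other hand, because $f_n\to f$ in $L^q$ and $\Lip_a(f_n,\cdot)\rightharpoonup h$, the function $h$ is itself a $q$-relaxed slope of $f$, so by the minimality property $\relgradq fq\le h$ $\mm$-a.e. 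Hence $h=\relgradq fq$ $\mm$-a.e., and in particular the whole sequence (not just a subsequence) converges weakly to $\relgradq fq$.

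Third, from weak lower semicontinuity of the norm we have $\|\relgradq fq\|_q\le \liminf_n\|\Lip_a(f_n,\cdot)\|_q$, while the domination $\Lip_a(f_n,\cdot)\le g_n$ together with $g_n\to\relgradq fq$ strongly yields $\limsup_n \|\Lip_a(f_n,\cdot)\|_q\le \|\relgradq fq\|_q$. Therefore $\|\Lip_a(f_n,\cdot)\|_q\to \|\relgradq fq\|_q$, and the Radon–Riesz property of the uniformly convex space $L^q(X,\mm)$ upgrades weak to strong convergence, giving \eqref{densitylip1}. The subtle point—and the main obstacle—is the second step: arranging that the weak limit of $\Lip_a(f_n,\cdot)$ is exactly $\relgradq fq$ rather than some possibly larger function. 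This is where the uniqueness of the minimiser in the definition of $\relgradq fq$ and the order passage from the $\le g_n$ bound come in.
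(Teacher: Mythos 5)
Your argument is correct and follows exactly the route the paper indicates (the argument of \cite{Ambrosio-Gigli-Savare11} plus uniform convexity of $L^q$): identify the weak limit of ${\rm Lip}_a(f_n,\cdot)$ with $|\nabla f|_{*,q}$, show the $L^q$ norms converge, and invoke the Radon--Riesz property. The only cosmetic point is that the identification in your first step is really supplied by the pointwise minimality of $|\nabla f|_{*,q}$ (Lemma~\ref{le:local}) rather than by uniqueness of the norm-minimiser alone, but your second step carries out precisely that argument, so the proof is complete.
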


Notice that in principle the integrability of $f$ could be
decoupled from the integrability of the gradient, because no global
Poincar\'e inequality can be expected at this level of generality.
Indeed, to increase the symmetry with the definition of weak upper gradient
(which involves no integrability assumption on $f$), one
might even consider the convergence $\mm$-a.e. of the approximating
functions, removing any integrability assumption. We have left the
convergence in $L^q$ because this presentation is more consistent
with the usual presentations of Sobolev spaces, and the definitions
given in \cite{Cheeger00} and \cite{Ambrosio-Gigli-Savare11}. Using
locality and a truncation argument, the definitions can be extended
to more general classes of functions, see
\eqref{eq:extendedrelaxed}.

\begin{lemma}[Pointwise minimality of $\relgradq fq$]\label{le:local}
Let $g_1,\,g_2$ be two $q$-relaxed slopes of $f$. Then
$\min\{g_1,g_2\}$ is a $q$-relaxed slope as well. In particular, not
only the $L^q$ norm of $\relgradq fq$ is minimal, but also
$\relgradq fq\leq g$ $\mm$-a.e. in $X$ for any relaxed slope $g$ of
$f$.
\end{lemma}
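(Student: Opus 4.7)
The lemma contains two assertions, and the pointwise minimality follows easily from the lattice property: fix any $q$-relaxed slope $g$ and apply the first claim with $g_1 := \relgradq{f}{q}$ and $g_2 := g$ to get that $h := \min\{\relgradq{f}{q}, g\}$ is a $q$-relaxed slope satisfying $h \leq \relgradq{f}{q}$ $\mm$-a.e. By $L^q$-minimality of $\relgradq{f}{q}$, $\|h\|_q \geq \|\relgradq{f}{q}\|_q$, and combined with the pointwise inequality this forces $h = \relgradq{f}{q}$ $\mm$-a.e., whence $\relgradq{f}{q} \leq g$ $\mm$-a.e.

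For the lattice property I would use the strong-convergence reformulation noted right after Definition~\ref{def:genuppergrad} to pick Lipschitz-with-bounded-support sequences $(f_n^i)_{n\geq 1}$, $i=1,2$, with $f_n^i \to f$ in $L^q$, ${\rm Lip}_a(f_n^i,\cdot) \leq G_n^i$, and $G_n^i \to g_i$ strongly in $L^q$. Set $A := \{g_1 \leq g_2\}$. Using inner and outer regularity of the Borel measure $\mm$ on the Polish space $X$, for each $k\geq 1$ I would select a compact set $K_k \subset A$ and an open set $V_k \supset K_k$ (both contained in a fixed bounded set) with $\mm(A \setminus K_k) + \mm(V_k \setminus K_k) < 1/k$, pick $\delta_k \in (0,1)$ so that $\{\sfd(\cdot, K_k) \leq \delta_k\} \subset V_k$, and define the Lipschitz cutoff $\chi_k(x) := \max\{0, 1 - \sfd(x, K_k)/\delta_k\}$, which equals $1$ on $K_k$, vanishes outside $V_k$, and satisfies ${\rm Lip}(\chi_k) \leq 1/\delta_k$. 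The glued sequence $h_{n,k} := \chi_k f_n^1 + (1-\chi_k) f_n^2$ is Lipschitz with bounded support and, for each fixed $k$, converges to $f$ in $L^q$ as $n\to\infty$.

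To bound ${\rm Lip}_a(h_{n,k},\cdot)$, I would write $h_{n,k}$ as both $f_n^2 + \chi_k(f_n^1-f_n^2)$ and $f_n^1 + (1-\chi_k)(f_n^2-f_n^1)$, then apply subadditivity \eqref{eq:subadd} and the Leibniz inequality \eqref{eq:leibn} for ${\rm Lip}_a$ (valid by passing $r\downarrow 0$ in the analogous bounds for ${\rm Lip}(\cdot, B(x,r))$). Taking the pointwise minimum of the two resulting estimates gives an upper bound $M_{n,k}$ that equals $G_n^1$ on $K_k$, equals $G_n^2$ outside $V_k$, is dominated by $2(G_n^1+G_n^2)$ on the transition region $V_k\setminus K_k$, and carries an additive error $\delta_k^{-1}|f_n^1-f_n^2|$. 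Finally I would perform a diagonal extraction $n = n(k)$ chosen so that $\delta_k^{-1}\|f_{n(k)}^1 - f_{n(k)}^2\|_{L^q} \to 0$; then $h_{n(k),k} \to f$ in $L^q$, and $M_{n(k),k}$ converges strongly in $L^q$ to $g_1\mathbf{1}_A + g_2\mathbf{1}_{A^c} = \min\{g_1,g_2\}$, using $\chi_k \to \mathbf{1}_A$ $\mm$-a.e., $\mm(V_k\setminus K_k)\to 0$, and the $L^q$-integrability of $g_1, g_2$. This yields $\min\{g_1,g_2\}$ as a $q$-relaxed slope.

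The main obstacle is the blow-up ${\rm Lip}(\chi_k) = 1/\delta_k \to \infty$ as the cutoff sharpens onto $A$: the Leibniz bound produces a term $\delta_k^{-1}|f_n^1-f_n^2|$ that can only be tamed by a diagonal extraction exploiting the strong $L^q$-convergence $f_n^i \to f$, i.e., by letting $n=n(k)$ outpace the sharpening of $\chi_k$. Handling the transition region $V_k\setminus K_k$ is also delicate but routine once $\mm(V_k\setminus K_k)<1/k$ is arranged.
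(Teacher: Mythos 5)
Your overall strategy is the same as the paper's: approximate $f$ by two Lipschitz sequences realizing $g_1$ and $g_2$, glue them with a Lipschitz cutoff adapted to $A=\{g_1\le g_2\}$, control ${\rm Lip}_a$ of the glued functions via subadditivity and the Leibniz inequality, and absorb the term coming from the Lipschitz constant of the cutoff using the strong $L^q$ convergence of $f_n^1-f_n^2$; your derivation of pointwise minimality from the lattice property is also the paper's argument. There is, however, a concrete flaw in your approximation step. You require a compact $K_k\subset A$ with $\mm(A\setminus K_k)<1/k$ (and $K_k$, $V_k$ inside a fixed bounded set): this is unattainable whenever $\mm(A)=\infty$, and $A$ typically does have infinite measure (it contains $\{g_1=g_2=0\}$); moreover, in this section $\mm$ is only assumed to be a nonnegative Borel measure, so inner and outer regularity with respect to $\mm$ itself is not guaranteed. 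What your limit actually needs is not smallness of $\mm(A\setminus K_k)$ and $\mm(V_k\setminus K_k)$, but smallness of $\int_{A\setminus K_k}(g_1^q+g_2^q)\,\d\mm$ and $\int_{V_k\setminus K_k}(g_1^q+g_2^q)\,\d\mm$: these are what control $\|\mathbf{1}_{K_k}G^1_{n(k)}-\mathbf{1}_A g_1\|_q$, the contribution of the transition region, and the passage from $X\setminus V_k$ to $X\setminus A$. So you should invoke inner and outer regularity of the finite Borel measure $(g_1^q+g_2^q)\mm$ (automatic on a Polish space), and drop the bounded-set requirement on $K_k$, $V_k$, which is not needed since $h_{n,k}$ already has bounded support because $f_n^1$, $f_n^2$ do. With this replacement your argument closes.

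For comparison, the paper proves the slightly stronger statement that the function equal to $g_1$ on $B$ and to $g_2$ on $X\setminus B$ is a $q$-relaxed slope for \emph{every} Borel set $B$, first reducing to $B$ open (this is where the measure-theoretic approximation is hidden, carried out through the closedness of the convex set of relaxed slopes), and then using the cutoff $\phi_r$ built from ${\rm dist}(\cdot,X\setminus B)$. The two limits are decoupled: for fixed $r$ one lets $n\to\infty$, where the error ${\rm Lip}(\phi_r)|f_{n,1}-f_{n,2}|$ vanishes strongly with no diagonal extraction, obtaining a relaxed slope equal to $g_1$ on $B_{2r}$, to $g_2$ off $\overline{B_r}$, and bounded by $g_1+2g_2$ on the transition region; then one lets $r\downarrow0$, using the closedness of the class of relaxed slopes once more. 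This organizes the same estimates more cleanly than tracking the joint dependence on $n$ and $k$.
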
 
\begin{proof} We argue as in \cite{Cheeger00}, \cite{Ambrosio-Gigli-Savare11}.
First we notice that for every $f,\,g \in \Lip(X)$
\begin{equation}
\label{eq:subadd-relgradq}\Lip_a(f+g,x) \leq \Lip_a(f,x) +  \Lip_a(g,x)  \qquad \forall x\in X,
\end{equation}
\begin{equation}
\label{eq:leibn-relgradq}\Lip_a(fg,x) \leq |f(x)|  \Lip_a(g,x) +  |g(x)| \Lip_a(f,x) \qquad \forall x\in X.
\end{equation}
Indeed \eqref{eq:subadd-relgradq} is obvious; for \eqref{eq:leibn-relgradq} we have that
$$|f(z)g(z)-f(y)g(y)| \leq |f(z)||g(z)-g(y)|+|g(y)| |f(z)-f(y)| \qquad \forall y,\,z \in X,$$
so that
$$\Lip (fg,B(x,r)) \leq \sup_{z\in B(x,r)}|f(z)|  \Lip(g,B(x,r)) +  \sup_{y\in B(x,r)}|g(y)| \Lip(f,B(x,r)) \qquad \forall x\in X$$
 and we let $r\to 0$.

It is sufficient to prove that if $B\subset X$ is a Borel set, then
$\nchi_Bg_1+\nchi_{X\setminus B}g_2$ is a $q$-relaxed slope of $f$. By
approximation, taking into account the closure of the class of
$q$-relaxed slopes, we can assume with no loss of generality that $B$ is
an open set. We fix $r>0$ and a Lipschitz function $\phi_r:X\to
[0,1]$ equal to $0$ on $X\setminus B_r$ and equal to $1$ on
$B_{2r}$, where the open sets $B_s\subset B$ are defined by
$$
B_s:=\left\{x\in X:\ {\rm dist}(x,X\setminus B)> s\right\}\subset B.
$$
Let now $f_{n,i}$, $i=1,\,2$, be Lipschitz functions with bounded support
converging to $f$ in $L^q(X,\mm)$ as $n\to\infty$, with ${\rm Lip}_a( {f_{n,i}, \cdot})$ weakly convergent to $\tilde g_i$ in $L^q(X,\mm)$ and set $f_n:=\phi_r
f_{n,1}+(1-\phi_r)f_{n,2}$. Then, ${\rm Lip}_a( {f_{n}, \cdot})= {\rm Lip}_a( {f_{n,1}, \cdot})$ on
$B_{2r}$ and ${\rm Lip}_a( {f_{n}, \cdot})= {\rm Lip}_a( {f_{n,2}, \cdot})$ on
$X\setminus\overline{B_r}$; for every $x\in \overline{B_r}\setminus B_{2r}$, by
applying \eqref{eq:subadd-relgradq} to $f_{n,2}$ and $\phi_r
(f_{n,1}-f_{n,2})$ and by applying \eqref{eq:leibn-relgradq} to $\phi_r$ and$(f_{n,1}-f_{n,2})$ , we can estimate
$$
{\rm Lip}_a( {f_{n}, x}) \leq {\rm Lip}_a( {f_{n,2}, x})+{\rm
Lip}(\phi_r)|f_{n,1}(x)-f_{n,2}(x)|+ \phi_r\bigl( {\rm Lip}_a( {f_{n,1}, x})+ {\rm Lip}_a( {f_{n,2}, x})\bigr).
$$
Since $\overline{B_r}\subset B$, by taking weak limits of a
subsequence, it follows that
$$
\nchi_{B_{2r}}g_1+\nchi_{X\setminus\overline{B_r}}g_2+\nchi_{B\setminus
B_{2r}}(g_1+2g_2)
$$
is a $q$-relaxed slope of $f$. Letting $r\downarrow 0$ gives that
$\nchi_Bg_1+\nchi_{X\setminus B}g_2$ is a $q$-relaxed slope as well.

For the second part of the statement argue by contradiction: let $g$
be a $q$-relaxed slope of $f$ and assume that $B=\{g<\relgradq fq\}$ is
such that $\mm(B)>0$. Consider the $q$-relaxed slope $g\nchi_B+\relgradq
fq \nchi_{X\setminus B}$: its $L^q$ norm is strictly less than the
$L^q$ norm of $\relgradq fq$, which is a contradiction.
\end{proof}

The previous pointwise minimality property immediately yields
\begin{equation}
\label{eq:facile} \relgradq  fq\leq {\rm Lip}_a(f,\cdot)\qquad\text{$\mm$-a.e.
in $X$}
\end{equation}
for any Lipschitz function $f:X\to\R$ with bounded support. Since both objects are
local, the inequality immediately extends by a truncation argument to all functions
$f\in L^q(X,\mm)$ with a $q$-relaxed slope, Lipschitz on bounded sets.

Also the proof of locality and chain rule is quite standard, see
\cite{Cheeger00} and \cite[Proposition~4.8]{Ambrosio-Gigli-Savare11}
for the case $q=2$ (the same proof works in the general case).

\begin{proposition}[Locality and chain rule]\label{prop:chain}
If $f\in L^q(X,\mm)$ has a $q$-relaxed slope, the following
properties hold.
\begin{itemize}
\item[(a)] $\relgradq hq=\relgradq fq$ $\mm$-a.e. in $\{h=f\}$
whenever $f$ has a $q$-relaxed slope.
\item[(b)] $\relgradq {\phi(f)}q\leq |\phi'(f)|\relgradq fq$ for any $C^1$ and Lipschitz function
$\phi$ on an interval containing the image of $f$. Equality holds if
$\phi$ is nondecreasing.
\end{itemize}
\end{proposition}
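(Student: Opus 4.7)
My plan is to derive both properties from the pointwise minimality of $\relgradq fq$ (Lemma~\ref{le:local}), together with the subadditivity \eqref{eq:subadd-relgradq} and the Leibniz-type estimate \eqref{eq:leibn-relgradq} for $\Lip_a$. The locality argument directly adapts the gluing construction already carried out in the proof of Lemma~\ref{le:local}.

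\medskip
For (a), I fix an open set $B\subset\{f=h\}$; the extension to Borel sets follows by approximation exactly as in Lemma~\ref{le:local}. Using Proposition~\ref{prop:easy}, I select Lipschitz functions with bounded support $f_n\to f$ and $h_n\to h$ in $L^q(X,\mm)$ with $\Lip_a(f_n,\cdot)\to\relgradq fq$ and $\Lip_a(h_n,\cdot)\to\relgradq hq$ strongly in $L^q$. With the cutoff $\phi_r$ from the proof of Lemma~\ref{le:local}, set $\tilde h_n:=\phi_r f_n+(1-\phi_r)h_n$. Since $\phi_r$ is supported inside $B$ and $f=h$ on $B$, we have $\tilde h_n\to h$ in $L^q$. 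Applying \eqref{eq:subadd-relgradq}--\eqref{eq:leibn-relgradq} exactly as in the proof of Lemma~\ref{le:local} gives
$$
\Lip_a(\tilde h_n,x)\leq\Lip_a(f_n,x)\ \text{ on }B_{2r},\qquad \Lip_a(\tilde h_n,x)\leq\Lip_a(h_n,x)\ \text{ on }X\setminus\overline{B_r},
$$
while on the annulus $\overline{B_r}\setminus B_{2r}$ one gets an extra error term bounded by $\Lip(\phi_r)|f_n-h_n|(x)$, which vanishes in the $L^q$ limit because $f=h$ on $B$. Passing to the strong limit in $n$ and then letting $r\downarrow 0$ shows that $\nchi_B\relgradq fq+\nchi_{X\setminus B}\relgradq hq$ is a $q$-relaxed slope of $h$; pointwise minimality then yields $\relgradq hq\leq\relgradq fq$ on $B$, and exchanging the roles of $f$ and $h$ gives equality.

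\medskip
For the chain rule inequality in (b), I apply Proposition~\ref{prop:easy} to approximate $f$ by Lipschitz functions $f_n$ with bounded support. Since $\phi\in C^1$, the mean value theorem yields
$$
\Lip(\phi\circ f_n,B(x,r))\leq\sup_{B(x,r)}|\phi'\circ f_n|\cdot\Lip(f_n,B(x,r)),
$$
and letting $r\downarrow 0$, using continuity of $\phi'$ on the bounded range of $f_n$, produces $\Lip_a(\phi\circ f_n,x)\leq|\phi'(f_n(x))|\Lip_a(f_n,x)$. To preserve the bounded-support assumption I subtract the constant $\phi(0)$; then dominated convergence (the factor $|\phi'\circ f_n|$ is bounded by $\Lip(\phi)$ and converges $\mm$-a.e.\ along a subsequence to $|\phi'(f)|$) shows that $|\phi'(f)|\relgradq fq$ is a $q$-relaxed slope of $\phi(f)-\phi(0)$; part (a) then transfers the conclusion to $\phi(f)$.

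\medskip
The equality clause in the nondecreasing case is the main obstacle. If one further assumes $\phi'>0$ everywhere, then $\phi$ has a $C^1$ Lipschitz inverse on compact subsets of its range, and applying the inequality of (b) to $\phi^{-1}$ at $\phi(f)$ directly yields $\relgradq fq\leq|\phi'(f)|^{-1}\relgradq{\phi(f)}q$, which is the missing bound. For general nondecreasing $\phi$ the zero set of $\phi'$ obstructs this inversion, and I bypass it by perturbation: set $\phi_\ep(t):=\phi(t)+\ep t$, which is strictly increasing. The strict case gives $\relgradq{\phi_\ep(f)}q=(\phi'(f)+\ep)\relgradq fq$, while subadditivity of relaxed slopes (a direct consequence of \eqref{eq:subadd-relgradq} applied at the level of the approximating sequences) yields $\relgradq{\phi_\ep(f)}q\leq\relgradq{\phi(f)}q+\ep\relgradq fq$; combining these and letting $\ep\downarrow 0$ produces the desired bound $\phi'(f)\relgradq fq\leq\relgradq{\phi(f)}q$.
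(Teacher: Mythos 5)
The paper itself does not prove this proposition (it refers to \cite{Cheeger00} and \cite[Proposition~4.8]{Ambrosio-Gigli-Savare11}), so I compare your argument with the standard one. Your part (b) is essentially correct: the approximation argument for the inequality is the usual one, and your proof of the equality case --- inverting the strictly increasing perturbation $\phi_\ep(t)=\phi(t)+\ep t$, whose inverse is $C^1$ and $\ep^{-1}$-Lipschitz on an interval containing the image of $\phi_\ep(f)$, and then removing $\ep$ via the subadditivity $\relgradq{(u+v)}q\le\relgradq uq+\relgradq vq$ --- is a clean and self-contained route to the reverse inequality. One slip: the final step of the inequality part is not an application of (a), since $\phi(f)$ and $\phi(f)-\phi(0)$ agree nowhere when $\phi(0)\neq 0$; what you need there is invariance of the relaxed slope under addition of constants (or simply the reduction to $\phi(0)=0$, which is anyway forced when $\mm(X)=\infty$, since otherwise $\phi(f)\notin L^q(X,\mm)$ and $\relgradq{\phi(f)}q$ is undefined).

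Part (a), however, has a genuine gap. The set $\{f=h\}$ is merely Borel and may have empty interior while carrying positive measure, so there may be no open $B\subset\{f=h\}$ on which to run your gluing. The reduction ``by approximation exactly as in Lemma~\ref{le:local}'' does not transfer: there one \emph{enlarges} an arbitrary Borel set to open supersets and uses closedness of the class of relaxed slopes, which is harmless because the claim is wanted for every Borel set. Here enlarging $\{f=h\}$ to an open superset $B$ destroys exactly the identity $f=h$ on the transition annulus $\overline{B_r}\setminus B_{2r}$, so the error term $\Lip(\phi_r)|f_n-h_n|$ converges to $\Lip(\phi_r)|f-h|$, which is no longer zero there and cannot be controlled as $r\downarrow0$ since $\Lip(\phi_r)\geq 1/r$. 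The standard argument runs in the opposite direction to yours: locality is \emph{deduced from} the chain rule. One first shows that $\relgradq gq=0$ $\mm$-a.e.\ on $\{g=0\}$ whenever $g$ has a relaxed slope, by applying (b) to functions $\phi_\ep$ with $0\le\phi_\ep'\le1$, $\phi_\ep\equiv0$ on $[-\ep,\ep]$ and $\phi_\ep\to{\rm id}$ uniformly: then $\phi_\ep'(g)\relgradq gq$ is a relaxed slope of $\phi_\ep(g)$ vanishing on $\{|g|\le\ep\}$, and a weak $L^q$ limit as $\ep\downarrow0$ (using the stability of relaxed slopes under this operation, via the diagonal argument following Definition~\ref{def:genuppergrad}) yields a relaxed slope of $g$ vanishing a.e.\ on $\{g=0\}$, so pointwise minimality concludes. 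Part (a) then follows from $\relgradq hq\le\relgradq fq+\relgradq{(h-f)}q$ applied on $\{h-f=0\}$, together with the symmetric inequality.
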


\subsection{$q$-weak upper gradients and $|\nabla f|_{w,q}$}

Recall that the evaluation maps $\rme_t:C([0,1],X)\to X$ are defined
by $\rme_t(\gamma):=\gamma_t$. We also introduce the restriction
maps ${\rm restr}_t^s: C([0,1],X)\to C([0,1],X)$, $0\le t\le s\le
1$, given by
\begin{equation}
{\rm restr}_t^s(\gamma)_r:=\gamma_{(1-r)t+rs},\label{eq:93}
\end{equation}
so that ${\rm restr}_t^s$ ``stretches'' the restriction of the curve
to $[s,t]$ to the whole of $[0,1]$.

Our definition of $q$-weak upper gradient is inspired by
\cite{Koskela-MacManus}, \cite{Shanmugalingam00}, 
allowing for exceptional curves in \eqref{eq:uppergradient}, but with a different notion
of exceptional set, compared to \cite{Koskela-MacManus}, \cite{Shanmugalingam00}.

\begin{definition}[Test plans and negligible sets of curves]\label{def:testplans}
We say that a probability measure $\ppi\in\prob{C([0,1],X)}$ is a
$p$-\emph{test plan} if $\ppi$ is concentrated on $AC^p([0,1],X)$,
$\iint_0^1|\dot\gamma_t|^p\d t\,\d\ppi<\infty$ and there exists a
constant $C(\ppi)$ such that
\begin{equation}
(\e_t)_ \sharp\ppi \leq C(\ppi)\mm\qquad\forall t\in[0,1].
\label{eq:1}
\end{equation}
A set $A\subset C([0,1],X)$ is said to be
$q$-\emph{negligible} if it is contained in a $\ppi$-negligible set for any $p$-test plan $\ppi$. 
A property which holds for every $\gamma\in C([0,1],X)$, except
possibly a $q$-negligible set, is said to hold for $q$-almost every
curve.
\end{definition}
Observe that, by definition, $C([0,1],X)\setminus AC^p([0,1],X)$ is
$q$-negligible, so the notion starts to be meaningful when we look
at subsets of $AC^p([0,1],X)$. 
\begin{remark}
  \label{re:easy}
  \upshape
  An easy consequence of condition \eqref{eq:1} is that if two
  $\mm$-measurable functions $f,\,g:X\to\R$ coincide up to a
  $\mm$-negligible set and $\mathcal T$ is an at most countable subset
  of $[0,1]$, then the functions
  $f\circ \gamma$ and $g\circ \gamma$ coincide in $\mathcal T$ 
  for $q$-almost every curve
  $\gamma$. 

  Moreover, choosing an arbitrary $p$-test plan $\ppi$ and applying Fubini's
  Theorem to the product measure $\Leb 1\times \ppi$
  in $(0,1)\times C([0,1];X)$ we also obtain that
  $f\circ\gamma=g\circ\gamma$ $\Leb 1$-a.e.\ in $(0,1)$ for
  $\ppi$-a.e.\ curve $\gamma$; since $\ppi$ is arbitrary, the same
  property holds for $q$-a.e.\ $\gamma$.
\end{remark}

Coupled with the definition of
$q$-negligible set of curves, there are the definitions of $q$-weak upper gradient and
 of functions which are Sobolev along $q$-a.e. curve.
\begin{definition}[$q$-weak upper gradients]
A Borel function
$g:X\to[0,\infty]$ is a $q$-weak upper gradient of $f:X\to \R$ if
\begin{equation}
\label{eq:inweak} \left|\int_{\partial\gamma}f\right|\leq
\int_\gamma g<\infty\qquad\text{for $q$-a.e. $\gamma$.}
\end{equation}
\end{definition}

\begin{definition}[Sobolev functions along $q$-a.e. curve]
 A function $f:X\to\R$ is Sobolev along $q$-a.e. curve if for
$q$-a.e. curve $\gamma$ the function $f\circ\gamma$ coincides a.e.
in $[0,1]$ and in $\{0,1\}$ with an absolutely continuous map
$f_\gamma:[0,1]\to\R$.
\end{definition}

By Remark \ref{re:easy} applied to $\mathcal T:=\{0,1\}$, \eqref{eq:inweak} does not depend on
the particular representative of $f$ in the class of $\mm$-measurable
function coinciding with $f$ up to a $\mm$-negligible set. 
The same Remark also shows that  the property of being Sobolev along
$q$-q.e.\ curve $\gamma$ is independent of the representative in the
class of $\mm$-measurable functions coinciding with $f$ $\mm$-a.e.\
in $X$.

In the next proposition, based on Lemma~\ref{lem:Fibonacci}, we prove that the existence of a $q$-weak
upper gradient $g$ implies Sobolev regularity along $q$-a.e.\ curve. 

\begin{proposition}
  \label{prop:restr}
  Let $f:X\to\R$ be $\mm$-measurable, and let $g$ be a $q$-weak upper gradient. Then $f$ is Sobolev along $q$-a.e. curve.
\end{proposition}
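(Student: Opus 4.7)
The strategy is to apply the $q$-weak upper gradient property of $g$ not just to a curve $\gamma$ but also to all of its sub-arcs, then to invoke Lemma~\ref{lem:Fibonacci} along $\gamma$ to produce an absolutely continuous representative of $f\circ\gamma$.

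Fix an arbitrary $p$-test plan $\ppi$; it suffices to show that $f$ is Sobolev along $\ppi$-a.e.\ curve, since $\ppi$ is arbitrary. For every $0\leq s<t\leq 1$ consider the push-forward $\ppi_s^t:=(\mathrm{restr}_s^t)_\sharp\ppi$. A direct computation using \eqref{eq:93} shows that $(\e_r)_\sharp\ppi_s^t=(\e_{(1-r)s+rt})_\sharp\ppi\leq C(\ppi)\mm$ for every $r\in[0,1]$, and a change of variables gives $\iint_0^1|\dot\eta_r|^p\,dr\,d\ppi_s^t(\eta)=(t-s)^{p-1}\iint_s^t|\dot\gamma_r|^p\,dr\,d\ppi(\gamma)<\infty$. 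Hence $\ppi_s^t$ is again a $p$-test plan with the same constant $C(\ppi)$. Applying \eqref{eq:inweak} to $\ppi_s^t$ and reparametrizing yields, for every fixed $(s,t)$, the inequality
\[
|f(\gamma_t)-f(\gamma_s)|\leq \int_s^t g(\gamma_u)|\dot\gamma_u|\,du
\]
for $\ppi$-a.e.\ $\gamma$, the exceptional set depending on $(s,t)$.

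The next step is to swap the order of quantifiers by Fubini. Fix a Borel representative of $f$ (the choice is irrelevant by Remark~\ref{re:easy}), and consider the set
\[
N:=\Bigl\{(s,t,\gamma)\in(0,1)^2\times C([0,1];X):\ s<t,\ |f(\gamma_t)-f(\gamma_s)|>\int_s^t g(\gamma_u)|\dot\gamma_u|\,du\Bigr\}.
\]
By the previous step each $(s,t)$-section of $N$ is $\ppi$-negligible, so $N$ is $\Leb^2\times\ppi$-negligible by Fubini. Consequently, for $\ppi$-a.e.\ curve $\gamma$ the inequality $|f(\gamma_t)-f(\gamma_s)|\leq\bigl|\int_s^t g(\gamma_u)|\dot\gamma_u|\,du\bigr|$ holds for $\Leb^2$-a.e.\ $(s,t)\in(0,1)^2$. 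Moreover, applying the weak upper gradient property to $\ppi$ itself ensures $\int_0^1 g(\gamma_u)|\dot\gamma_u|\,du<\infty$ for $\ppi$-a.e.\ $\gamma$, so that $g(\gamma_\cdot)|\dot\gamma_\cdot|\in L^1(0,1)$.

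Now Lemma~\ref{lem:Fibonacci} (with $q=1$) applied to $t\mapsto f(\gamma_t)$ produces, for $\ppi$-a.e.\ $\gamma$, a function $f_\gamma\in W^{1,1}(0,1)$ (hence absolutely continuous) coinciding a.e.\ with $f\circ\gamma$. To complete the verification that $f_\gamma$ agrees with $f\circ\gamma$ also at $\{0,1\}$, I would reuse the restriction argument with $s=0$ and with $t=1$: for $\ppi$-a.e.\ $\gamma$ and $\Leb^1$-a.e.\ $r$,
\[
|f(\gamma_r)-f(\gamma_0)|\leq\int_0^r g(\gamma_u)|\dot\gamma_u|\,du,\qquad |f(\gamma_1)-f(\gamma_r)|\leq\int_r^1 g(\gamma_u)|\dot\gamma_u|\,du,
\]
and the right-hand sides tend to $0$ as $r\to 0^+$ and $r\to 1^-$ respectively. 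Taking such $r$ along which also $f(\gamma_r)=f_\gamma(r)$ and using the continuity of $f_\gamma$ gives $f_\gamma(0)=f(\gamma_0)$ and $f_\gamma(1)=f(\gamma_1)$ for $\ppi$-a.e.\ $\gamma$, which is the desired Sobolev regularity.

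The main subtlety is the measurability/Fubini step: the weak upper gradient inequality is a priori guaranteed only curve-by-curve with an exceptional set that varies with $(s,t)$, and one must verify that the joint exceptional set is $\Leb^2\times\ppi$-negligible. This is precisely where the stability of the test-plan property under $\mathrm{restr}_s^t$, together with the uniform bound $C(\ppi_s^t)\leq C(\ppi)$, is essential.
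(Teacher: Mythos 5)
Your proof is correct and follows essentially the same route as the paper's: restricting the test plan via $\mathrm{restr}_s^t$, applying Fubini to the product measure $\Leb{2}\times\ppi$ to swap the quantifiers, invoking Lemma~\ref{lem:Fibonacci} along $\ppi$-a.e.\ curve, and then handling the endpoints $\{0,1\}$ with the one-parameter version of the restriction argument. You merely make explicit a few details the paper leaves implicit (the verification that $\ppi_s^t$ is a $p$-test plan with the same constant, and the choice of a Borel representative to make the Fubini set measurable), so no changes are needed.
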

\begin{proof}
  Notice that if $\ppi$ is a $p$-test plan, so is $({\rm
restr}_t^s)_\sharp\ppi$. Hence if $g$ is a $q$-weak upper gradient
of $f$ such that $\int_\gamma g<\infty$ for
$q$-a.e.\ $\gamma$, then for every $t<s$ in $[0,1]$ it holds
    \[
    |f(\gamma_s)-f(\gamma_t)|\leq \int_t^s g(\gamma_r)|\dot\gamma_r|\,\d
    r \qquad\text{for $q$-a.e. $\gamma$.}
    \]
    Let $\ppi$ be a $p$-test plan: by Fubini's theorem applied
    to the product measure $\Leb2\times\ppi$ in $(0,1)^2\times
    C([0,1];X)$, it follows that for $\ppi$-a.e. $\gamma$ the function
     $f$ satisfies
    \[
    |f(\gamma_s)-f(\gamma_t)|\leq \Bigl|\int_t^s g(\gamma_r)|\dot\gamma_r|\,\d
    r \Bigr|\qquad\text{for $\Leb{2}$-a.e. $(t,s)\in (0,1)^2$.}
    \]
    An analogous argument shows that 
    \begin{equation}
      \label{eq:2}
      \left\{
    \begin{aligned}
      \textstyle |f(\gamma_s)-f(\gamma_0)|&\textstyle 
      \leq \int_0^s
      g(\gamma_r)|\dot\gamma_r|\,\d r\\
      \textstyle |f(\gamma_1)-f(\gamma_s)|&\textstyle \leq \int_s^1
      g(\gamma_r)|\dot\gamma_r|\,\d r
    \end{aligned}\right.
    \qquad\text{for $\Leb{1}$-a.e. $s\in (0,1)$.}
\end{equation}
 Since $g\circ \gamma|\dot \gamma|\in L^1(0,1)$ for
    $\ppi$-a.e.\ $\gamma$,  
    by Lemma~\ref{lem:Fibonacci} it follows that $f\circ\gamma\in W^{1,1}(0,1)$
    for $\ppi$-a.e. $\gamma$, and
    \begin{equation}\label{eq:pointwisewug}
      \biggl|\frac{\d}{\dt}(f\circ\gamma)\biggr|\leq
      g\circ\gamma|\dot\gamma|\quad\text{a.e. in $(0,1)$, for
        $\ppi$-a.e. $\gamma$.}
    \end{equation}
  Since $\ppi$ is arbitrary, we conclude that $f\circ\gamma\in
  W^{1,1}(0,1)$ for $q$-a.e.\ $\gamma$, and therefore it admits an
  absolutely continuous representative $f_\gamma$; moreover,
  by \eqref{eq:2}, it is immediate to check that $f(\gamma(t))=f_\gamma(t)$ for $t\in \{0,1\}$ and $q$-a.e.\ $\gamma$.
\end{proof}

Using the same argument given in the previous proposition it is
immediate to show that 
\begin{equation}\label{eq:locweak}
\text{$g_i$, $i=1,2$ $q$-weak upper gradients of $f$}\quad\Longrightarrow\quad \text{$\min\{g_1,g_2\}$ $q$-weak upper gradient of $f$.}
\end{equation}
Using this stability property we can recover, as we did for relaxed slopes, a distinguished
minimal object.

\begin{definition}[Minimal $q$-weak upper gradient]
  Let $f:X\to\R$ be a $\mm$-measurable function having a $q$-weak upper gradient.
  The minimal $q$-weak upper gradient $\weakgradq fq$ of $f$
  is the $q$-weak upper gradient characterized, up to
$\mm$-negligible sets, by the property
\begin{equation}\label{eq:defweakgrad}
  \weakgradq fq\leq g\qquad\text{$\mm$-a.e. in $X$, for every $q$-weak upper
    gradient $g$ of $f$.}
\end{equation}
\end{definition}

Uniqueness of the minimal weak upper gradient is obvious. For
existence, since $\mm$ is $\sigma$-finite we can find a Borel and
$\mm$-integrable function $\theta:X\to (0,\infty)$ and $\weakgradq
fq :=\inf_n g_n$, where $g_n$ are $q$-weak upper gradients which
provide a minimizing sequence in
$$
\inf\left\{\int_X \theta\, {\rm tan}^{-1}g\,\d\mm:\ \text{$g$ is a
$q$-weak upper gradient of $f$}\right\}.
$$
We immediately see, thanks to \eqref{eq:locweak}, that we can assume
with no loss of generality that $g_{n+1}\leq g_n$. Hence, by
monotone convergence, the function $\weakgradq fq$ is a $q$-weak
upper gradient of $f$ and $\int_X \theta\,{\rm tan}^{-1}g\,\d\mm$ is
minimal at $g=\weakgradq fq$. This minimality, in conjunction with
\eqref{eq:locweak}, gives \eqref{eq:defweakgrad}.

Next we consider the stability of $q$-weak upper gradients (analogous to the stability result
given in \cite[Lemma~4.11]{Shanmugalingam00}). We shall actually need a slightly more general
statement, which involves a weaker version of the upper gradient property (when $\varepsilon=0$
we recover the previous definition, since curves with 0 length are constant).

\begin{definition}[$q$-weak upper gradient up to scale $\ep$]\label{dfn:wug-discr} Let $f:X\to\R$. We say that a Borel
function $g:X\to [0,\infty)$ is a 
$q$-weak upper gradient of $f$ up to scale $\varepsilon\geq 0$ if  
for $q$-a.e. curve $\gamma\in AC^p([0,1];X)$ such that
$$ \varepsilon<\int_0^1 |\dot{\gamma}_t|\, \d t$$
it holds
\begin{equation}\label{eqn:grad-discr}
\biggl| \int_{\partial\gamma} f  \biggr|\leq \int_\gamma g<\infty .
\end{equation}
\end{definition}

\begin{theorem}[Stability w.r.t. $\mm$-a.e. convergence]\label{thm:stabweak}
Assume that $f_n$ are $\mm$-measurable, $\varepsilon_n\geq 0$ 
and that $g_n\in L^q(X,\mm)$ are $q$-weak upper gradients of $f_n$ up to scale $\varepsilon_n$. Assume
furthermore that $f_n(x)\to f(x)\in\R$ for $\mm$-a.e. $x\in X$, $\varepsilon_n\to\varepsilon$ and
that $(g_n)$ weakly converges to $g$ in $L^q(X,\mm)$. Then $g$ is a
$q$-weak upper gradient of $f$ up to scale $\varepsilon$.
\end{theorem}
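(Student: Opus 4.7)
The plan is to reduce to strong convergence via Mazur's lemma, and then pass to the inequality curve-by-curve along an arbitrary test plan. Since $q$-negligibility is tested against all $p$-test plans, it suffices to fix a $p$-test plan $\ppi$ and prove that $|f(\gamma_1)-f(\gamma_0)|\leq\int_\gamma g<\infty$ for $\ppi$-a.e.\ $\gamma$ with $\ell(\gamma):=\int_0^1|\dot\gamma_t|\,\d t>\varepsilon$.

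By Mazur's lemma applied to $g_n\rightharpoonup g$ in $L^q(X,\mm)$, there exist convex combinations $\tilde g_n:=\sum_{k=n}^{N(n)}\alpha_{n,k}g_k$ converging to $g$ strongly in $L^q$, and hence $\mm$-a.e.\ along a further subsequence. Set $\tilde f_n:=\sum_{k=n}^{N(n)}\alpha_{n,k}f_k$ and $\tilde\varepsilon_n:=\max_{n\leq k\leq N(n)}\varepsilon_k$, so that $\tilde\varepsilon_n\to\varepsilon$. The key observation is that $\tilde g_n$ is a $q$-weak upper gradient of $\tilde f_n$ up to scale $\tilde\varepsilon_n$: intersecting the finitely many $q$-generic sets on which the inequality holds for each pair $(f_k,g_k)$ with $n\leq k\leq N(n)$, the triangle inequality and linearity of $\int_\gamma(\cdot)$ give
$$|\tilde f_n(\gamma_1)-\tilde f_n(\gamma_0)|\leq\sum_{k}\alpha_{n,k}|f_k(\gamma_1)-f_k(\gamma_0)|\leq\sum_{k}\alpha_{n,k}\int_\gamma g_k=\int_\gamma\tilde g_n$$
whenever $\ell(\gamma)>\tilde\varepsilon_n$.

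Now I pass to the limit along $\ppi$. Convex combinations preserve $\mm$-a.e.\ convergence, so $\tilde f_n\to f$ $\mm$-a.e.; combined with $(\e_i)_\sharp\ppi\leq C(\ppi)\mm$ for $i\in\{0,1\}$, this yields $\tilde f_n(\gamma_i)\to f(\gamma_i)$ for $\ppi$-a.e.\ $\gamma$. For the line integrals, a Fubini--H\"older computation bounds $\int\bigl(\int_\gamma|\tilde g_n-g|\bigr)\,\d\ppi$ by a constant times $\|\tilde g_n-g\|_{L^q}$, exploiting both $(\e_t)_\sharp\ppi\leq C(\ppi)\mm$ and the finite $p$-energy of $\ppi$; the analogous estimate with $g$ alone also shows $\int_\gamma g<\infty$ for $\ppi$-a.e.\ $\gamma$. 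Extracting a further subsequence, $\int_\gamma\tilde g_n\to\int_\gamma g$ for $\ppi$-a.e.\ $\gamma$. For such a $\gamma$ with $\ell(\gamma)>\varepsilon$, we eventually have $\ell(\gamma)>\tilde\varepsilon_n$, so passing to the limit in the inequality for $(\tilde f_n,\tilde g_n)$ yields the desired bound.

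The main obstacle lies in the second paragraph: the weak-to-strong passage via Mazur forces convex combinations of the \emph{functions} $f_n$ as well as their gradients, and the upper-gradient inequality must be re-verified for the pairs $(\tilde f_n,\tilde g_n)$ at the enlarged scale $\tilde\varepsilon_n$. The presence of the positive scale $\varepsilon$ in the hypothesis complicates the bookkeeping (one must ensure $\ell(\gamma)>\tilde\varepsilon_n$ holds eventually, which is precisely why the assumption $\varepsilon_n\to\varepsilon$ enters), but does not alter the overall structure: setting $\varepsilon_n=\varepsilon=0$ recovers the classical stability of weak upper gradients.
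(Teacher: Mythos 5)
Your proposal is correct and follows essentially the same route as the paper: Mazur's lemma to pass from weak to strong $L^q$ convergence of the gradients (forcing the corresponding convex combinations of the $f_n$), the Fubini--H\"older estimate against an arbitrary $p$-test plan to extract $\ppi$-a.e.\ convergence of the line integrals, and a curve-by-curve passage to the limit. The only cosmetic difference is in handling the scale: the paper restricts $\ppi$ at the outset to curves with length exceeding some $\varepsilon'>\varepsilon$ and takes indices with $\varepsilon_h\leq\varepsilon'$, while you let $\tilde\varepsilon_n\to\varepsilon$ and verify $\ell(\gamma)>\tilde\varepsilon_n$ eventually for each fixed curve; both are valid.
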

\begin{proof}
Fix a $p$-test plan $\ppi$. 
We have to show that \eqref{eqn:grad-discr}
holds for $\ppi$-a.e. $\gamma$ with $\int_0^1|\dot\gamma_t|\,\d t>\varepsilon$.
Possibly restricting $\ppi$ to a smaller set of curves, we can assume with no
loss of generality that
$$
\int_0^1|\dot\gamma_t|\,\d t>\varepsilon'\qquad\text{for $\ppi$-a.e. $\gamma$}
$$
for some $\varepsilon'>\varepsilon$. We consider in the sequel integers $h$ sufficiently large, 
such that $\varepsilon_h\leq\varepsilon'$.

By Mazur's theorem we can find convex
combinations
$$
h_n:=\sum_{i=N_h+1}^{N_{h+1}}\alpha_ig_i\qquad\text{with
$\alpha_i\geq 0$, $\sum_{i=N_h+1}^{N_{h+1}}\alpha_i=1$,
$N_h\to\infty$}
$$
converging strongly to $g$ in $L^q(X,\mm)$. Denoting by $\tilde f_n$
the corresponding convex combinations of $f_n$, $h_n$ are $q$-weak upper
gradients of $\tilde f_n$ and still $\tilde f_n\to f$ $\mm$-a.e. in
$X$.

Since for every nonnegative Borel function $\varphi:X\to [0,\infty]$
it holds (with $C=C(\ppi)$)
\begin{align}
  \notag\int\Big(\int_{\gamma}\varphi\Big)\,\d\ppi&=
  \int\Big(\int_0^1 \varphi(\gamma_t)|\dot
  \gamma_t|\,\d t\Big)\,\d\ppi
  \le
  \int\Big(\int_0^1\varphi^q(\gamma_t)\,\d
  t\Big)^{1/q}
  \Big(\int_0^1 |\dot
  \gamma_t|^p\,\d t\Big)^{1/p}\,\d\ppi
  \\&\notag
  \le
  \Big(\int_0^1 \int\varphi^q\,\d(\rme_t)_\sharp\ppi\,\d
  t\Big)^{1/q}
  \Big(\iint_0^1|\dot\gamma_t|^p\,\d t\,\d\ppi\Big)^{1/p}
\\ &\le  \Big(C\int\varphi^q\,\d\mm\Big)^{1/q}  \Big(\iint_0^1|\dot\gamma_t|^p\,\d t\,\d\ppi\Big)^{1/p},
\label{eq:21}
 \end{align}
we obtain
$$
\int\int_{\gamma}|h_n-g|
\,\d\ppi\leq C^{1/q}\bigl(\iint_0^1|\dot\gamma_t|^p\,\d
t\,\d\ppi\bigr)^{1/p}
\|h_n-g\|_q
\to 0.
$$
Hence we can find a subsequence $n(k)$  such that
$$\lim_{k\to\infty}\int_\gamma|h_{n(k)}-g|\to 0\qquad\text{
for $\ppi$-a.e. $\gamma$.}$$ 
Since $\tilde{f}_n$ converge
$\mm$-a.e. to $f$ and the marginals of $\ppi$ are absolutely
continuous w.r.t. $\mm$ we have also that for $\ppi$-a.e. $\gamma$
it holds $\tilde{f}_n(\gamma_0)\to f(\gamma_0)$ and
$\tilde{f}_n(\gamma_1)\to f(\gamma_1)$.

If we fix a curve $\gamma$ satisfying these convergence properties,
we can pass to the limit as $k\to\infty$ in the inequalities
$| \int_{\partial\gamma} \tilde{f}_{n(k)}  | \leq \int_\gamma h_{n(k)}$ to get
$| \int_{\partial\gamma} f  | \leq \int_\gamma g$.
\end{proof}

Combining Proposition~\ref{prop:easy} with the fact that the asymptotic Lipschitz
constant is an upper gradient (and in particular a $q$-weak upper gradient), the
previous stability property gives that $|\nabla f|_{*,q}$ is a $q$-weak upper gradient.
Then, \eqref{eq:defweakgrad} gives
\begin{equation}\label{allinequalities1}
\weakgradq fq\leq |\nabla f|_{*,q}\qquad\text{$\mm$-a.e. in $X$}
\end{equation}
whenever $f\in L^q(X,\mm)$ has a $q$-relaxed slope. The proof of the converse inequality
(under no extra assumption on the metric measure structure)
requires much deeper ideas, described in the next two sections.

\section{Gradient flow of ${\bf C}_q$ and energy dissipation}\label{sec:Chq}

In this section we assume that $(X,\sfd)$ is complete and separable, and that
$\mm$ is a finite Borel measure.

As in the previous sections, $q\in (1,\infty)$ and $p$ is the dual exponent. In order
to apply the theory of gradient flows of convex functionals in
Hilbert spaces, when $q>2$ we need to extend $\relgradq fq$ also to
functions in $L^2(X,\mm)$ (because Definition~\ref{def:genuppergrad}
was given for $L^q(X,\mm)$ functions). To this aim, we denote
$f_N:=\max\{-N,\min\{f,N\}\}$ and set
\begin{equation}\label{eq:mathcalC}
\mathcal C:=\left\{f:X\to\R:\ \text{$f_N$ has a $q$-relaxed slope
for all $N\in\N$}\right\}.
\end{equation}
Accordingly, for all $f\in\mathcal C$ we set
\begin{equation}\label{eq:extendedrelaxed}
\relgradq fq:=\relgradq {f_N}q\qquad\text{$\mm$-a.e. in $\{|f|<N\}$}
\end{equation}
for all $N\in\N$. We can use the locality property in
Proposition~\ref{prop:chain}(a) to show that this definition is well
posed, up to $\mm$-negligible sets, and consistent with the previous
one. Furthermore, locality and chain rules still apply, so we shall
not use a distinguished notation for the new gradient.

We define an auxiliary functional, suitable for the Hilbertian energy dissipation estimates, by
\begin{equation}\label{def:Cheeger}
{\bf C}_q(f):=\frac{1}{q}\int_X |\nabla f|_{*,q}^q \,\d\mm,
\end{equation}
set to $+\infty$ if $f\in L^2(X,\mm)\setminus\mathcal C$.

\begin{theorem} \label{thm:cheeger} The functional
${\bf C}_q$ is convex and lower semicontinuous in $L^2(X,\mm)$.
\end{theorem}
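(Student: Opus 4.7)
I would split the proof into convexity and lower semicontinuity, in both cases reducing the $L^2$ setting to the simpler situation $f\in L^q(X,\mm)$ through the truncation definition \eqref{eq:extendedrelaxed}.

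\emph{Convexity on $L^q\cap\mathcal C$.} For $f,g\in L^q\cap\mathcal C$ and $\lambda\in(0,1)$, Proposition~\ref{prop:easy} yields Lipschitz approximations $f_n\to f$ and $g_n\to g$ with bounded support such that $\Lip_a(f_n,\cdot)\to\relgradq fq$ and $\Lip_a(g_n,\cdot)\to\relgradq gq$ strongly in $L^q$. Combining the subadditivity \eqref{eq:subadd-relgradq} with the $1$-homogeneity of $\Lip_a$,
\[
\Lip_a(\lambda f_n+(1-\lambda)g_n,\cdot)\leq \lambda\Lip_a(f_n,\cdot)+(1-\lambda)\Lip_a(g_n,\cdot)\longrightarrow \lambda\relgradq fq+(1-\lambda)\relgradq gq\quad\text{in }L^q.
\]
The strong-$L^q$ variant of Definition~\ref{def:genuppergrad} (obtained via Mazur's lemma, as recalled right after the definition) identifies the right-hand side as a $q$-relaxed slope of $\lambda f+(1-\lambda)g$, and pointwise minimality (Lemma~\ref{le:local}) gives
\[
\relgradq{\lambda f+(1-\lambda)g}q\leq \lambda\relgradq fq+(1-\lambda)\relgradq gq\quad\text{$\mm$-a.e.}
\]
Raising to the $q$th power, using convexity of $t\mapsto t^q$, and integrating completes this step.

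\emph{Lower semicontinuity on $L^2$.} Let $f_n\to f$ in $L^2(X,\mm)$ with $L:=\liminf_n{\bf C}_q(f_n)<\infty$; passing to a subsequence assume $\lim_n{\bf C}_q(f_n)=L$. Fix $N\in\N$. The truncations $(f_n)_N$ are uniformly bounded and converge to $f_N$ in $L^2$, hence also in $L^q$ since $\mm$ is finite. By locality (Proposition~\ref{prop:chain}(a)) and the vanishing of relaxed slopes on level sets where the function is constant, $\relgradq{(f_n)_N}q\leq\relgradq{f_n}q$ $\mm$-a.e., so ${\bf C}_q((f_n)_N)\leq{\bf C}_q(f_n)$. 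Applying Proposition~\ref{prop:easy} and diagonalizing in $n$, I select Lipschitz functions $h_n$ with bounded support such that $h_n\to f_N$ in $L^q$ and $\|\Lip_a(h_n,\cdot)\|_q^q\leq q\,{\bf C}_q((f_n)_N)+1/n$. Weak $L^q$-compactness extracts a subsequence with $\Lip_a(h_n,\cdot)\rightharpoonup \tilde g$ in $L^q$, and by Definition~\ref{def:genuppergrad} $\tilde g$ is a $q$-relaxed slope of $f_N$. Minimality combined with weak $L^q$ lower semicontinuity of the norm gives
\[
q\,{\bf C}_q(f_N)\leq \|\tilde g\|_q^q\leq \liminf_n\|\Lip_a(h_n,\cdot)\|_q^q\leq qL.
\]
Since this holds for every $N$, we have $f\in\mathcal C$; letting $N\to\infty$ and applying monotone convergence with the definition \eqref{eq:extendedrelaxed} (which forces $\relgradq{f_N}q=\relgradq fq$ on $\{|f|<N\}$ and $\relgradq{f_N}q=0$ on $\{|f|>N\}$) yields ${\bf C}_q(f)\leq L$.

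\emph{Convexity on $L^2$.} For $f,g\in L^2\cap\mathcal C$, the truncations $f_N,g_N\in L^q\cap\mathcal C$ converge to $f,g$ in $L^2$. The $L^q$-convexity applied to $f_N,g_N$ gives ${\bf C}_q(\lambda f_N+(1-\lambda)g_N)\leq \lambda{\bf C}_q(f_N)+(1-\lambda){\bf C}_q(g_N)\leq \lambda{\bf C}_q(f)+(1-\lambda){\bf C}_q(g)$ (the last inequality again by locality). The already-proved lower semicontinuity applied as $N\to\infty$ closes the argument. The main bookkeeping obstacle is the interplay between truncation and the extended definition \eqref{eq:extendedrelaxed}: everything rests on the fact that truncating does not increase the relaxed slope and that $\relgradq fq$ on the growing sets $\{|f|<N\}$ is captured faithfully by $\relgradq{f_N}q$, making monotone convergence available in the final passage $N\to\infty$.
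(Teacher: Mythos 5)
Your proposal is correct and follows essentially the same route as the paper: the paper also reduces lower semicontinuity to the uniformly bounded case via the truncations $f^N$, uses weak $L^q$-compactness of the (approximating) slopes together with the closedness of the set of $q$-relaxed slopes to pass to the limit at each fixed $N$, and concludes by monotone convergence; your convexity argument via subadditivity of $\Lip_a$ and pointwise minimality is exactly the ``elementary'' step the paper leaves to the reader. The only difference is organizational (you truncate before extracting weak limits, and you re-derive the closedness of relaxed slopes through Proposition~\ref{prop:easy} and a diagonal argument rather than invoking it directly), which does not change the substance.
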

\begin{proof} The proof of convexity is elementary, so we focus on
lower semicontinuity. Let $(f_n)$ be convergent to $f$ in
$L^2(X,\mm)$ and assume, possibly extracting a subsequence
and with no loss of generality, that ${\bf C}_q(f_n)$ converges to a
finite limit.

Assume first that all $f_n$ are uniformly bounded, so that $f_n\to f$ also in
$L^q(X,\mm)$ (because $\mm$ is finite). Let
$f_{n(k)}$ be a subsequence such that $\relgradq {f_{n(k)}}q$ weakly
converges to $g$ in $L^q(X,\mm)$. Then $g$ is a $q$-relaxed slope of
$f$ and
$$
{\bf C}_q(f)\leq\frac1q\int_X|g|^q\,\d\mm\leq\liminf_{k\to\infty}\frac1q
\int_X|\nabla f_{n(k)}|^q_{*,q}\,\d\mm
=\liminf_{n\to\infty}{\bf C}_q(f_n).
$$
In the general case when $f_n\in{\mathcal C}$ we consider the
functions $f^N_n:=\max\{-N,\min\{f,N\}\}$ to conclude from the
inequality $|\nabla f^N_n|_{*,q}\leq|\nabla f_n|_{*,q}$ that
$f^N:=\max\{-N,\min\{f,N\}\}$ has $q$-relaxed slope for any $N\in\N$
and
$$
\int_X|\nabla f^N|_{*,q}^q\,\d\mm\leq \liminf_{n\to\infty}
\int_X|\nabla f^N_n|_{*,q}^q\,\d\mm\leq\liminf_{n\to\infty}
\int_X|\nabla f_n|_{*,q}^q\,\d\mm.
$$
Passing to the limit as $N\to\infty$, the conclusion follows by
monotone convergence.
\end{proof}

\begin{remark}\label{rem:basiclsc} {\rm More generally, the same argument proves the
$L^2(X,\mm)$-lower semicontinuity of the functional
$$
f\mapsto\int_X \frac{|\nabla f|_{*,q}^q}{|f|^\alpha}\,\d\mm
$$
in $\mathcal C$, for any $\alpha>0$. Indeed, locality and chain rule
allow the reduction to nonnegative functions $f_n$ and we can use
the truncation argument of Theorem~\ref{thm:cheeger} to reduce
ourselves to functions with values in an interval $[c,C]$ with
$0<c\leq C<\infty$. In this class, we can again use the chain rule
to prove the identity
$$
\int_X|\nabla f^\beta|^q_{*,q}\,\d\mm= |\beta|^q\int_X\frac{|\nabla
f|_{*,q}^q}{|f|^\alpha}\,\d\mm
$$
with $\beta:=1-\alpha/q$ to obtain the result when $\alpha\neq q$.
If $\alpha=q$ we use a logarithmic transformation.
 }\fr
\end{remark}

Since the finiteness domain of ${\bf C}_q$ is dense in $L^2(X,\mm)$ (it
includes bounded Lipschitz functions), the Hilbertian theory of
gradient flows (see for instance \cite{Brezis73},
\cite{Ambrosio-Gigli-Savare08}) can be applied to Cheeger's
functional \eqref{def:Cheeger} to provide, for all $f_0\in
L^2(X,\mm)$, a locally absolutely continuous map $t\mapsto f_t$ from
$(0,\infty)$ to $L^2(X,\mm)$, with $f_t\to f_0$ as $t\downarrow 0$,
whose derivative satisfies
\begin{equation}\label{eq:ODE}
\frac{d}{dt}f_t\in -\partial^- {\bf C}_q(f_t)\qquad\text{for a.e. $t\in
(0,\infty)$.}
\end{equation}

Having in mind the regularizing effect of gradient flows, namely the
selection of elements with minimal $L^2(X,\mm)$ norm in
$\partial^-{\bf C}_q$, the following definition is natural.

\begin{definition}[$q$-Laplacian]\label{def:delta}
The $q$-Laplacian $\Delta_q f$ of $f\in L^2(X,\mm)$ is defined for
those $f$ such that $\partial^- {\bf C}_q(f)\neq\emptyset$. For those $f$,
$-\Delta_q f$ is the element of minimal $L^2(X,\mm)$ norm in
$\partial^-{\bf C}_q(f)$. The domain of $\Delta_q$ will be denoted by
$D(\Delta_q)$.
\end{definition}

It should be observed that, even in the case $q=2$, in general the
Laplacian is \emph{not} a linear operator. For instance, if $X=\R^2$
endowed with the sup norm $\|(x,y)\|=\max\{|x|,|y|\}$, then
$$
{\bf C}_2(f)=\int_{\R^2}\biggl(\biggl|\frac{\partial f}{\partial x}\biggr|+\biggl|\frac{\partial f}{\partial y}\biggr|\biggr)^2
\,\d x \d y.
$$
Since ${\bf C}_2$ is not a quadratic form, its subdifferential is not linear.

Coming back to our general framework, the trivial implication
\[
v\in\partial^-{\bf C}_q(f)\qquad\Longrightarrow\qquad \lambda^{q-1}
v\in\partial^-{\bf C}_q(\lambda f),\quad\forall \lambda\in\R,
\]
still 
ensures that the $q$-Laplacian (and so the gradient flow of ${\bf C}_q$)
is $(q-1)$-homogenous. 

We can now write
$$
\frac{\d}{\d t}f_t=\Delta_q f_t
$$
for gradient flows $f_t$ of ${\bf C}_q$, the derivative being understood
in $L^2(X,\mm)$, in accordance with the classical case.

\begin{proposition}[Integration by parts]
\label{prop:deltaineq} For all $f\in D(\Delta_q)$, $g\in D({\bf C}_q)$ it
holds
\begin{equation}
\label{eq:delta1} -\int_X g\Delta_q f\,\d\mm\leq \int_X \relgradq
gq|\nabla f|_{*,q}^{q-1}\,\d\mm.
\end{equation}
Equality holds if $g=\phi(f)$ with $\phi\in C^1(\R)$ with bounded
derivative on the image of $f$.
\end{proposition}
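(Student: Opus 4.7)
The plan is to exploit the variational characterization of $\Delta_q f$ from Definition~\ref{def:delta}: since $-\Delta_q f$ is the minimal-norm element of $\partial^-{\bf C}_q(f)$, for every $g\in D({\bf C}_q)$ and every $\varepsilon\in\R$,
$$
{\bf C}_q(f+\varepsilon g)-{\bf C}_q(f)\geq \varepsilon\int_X (-\Delta_q f)\,g\,\d\mm.
$$
Taking $\varepsilon>0$, dividing by $\varepsilon$, and letting $\varepsilon\downarrow 0$ reduces \eqref{eq:delta1} to an upper bound on the right derivative of ${\bf C}_q$ along $g$.

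For that bound I would use two ingredients. First, the subadditivity \eqref{eq:subadd-relgradq} of $\Lip_a$ is inherited by the minimal $q$-relaxed slope: $|\nabla(f+\varepsilon g)|_{*,q}\leq |\nabla f|_{*,q}+\varepsilon|\nabla g|_{*,q}$ $\mm$-a.e.\ (this is the analogue of \eqref{eq:subadd} for relaxed slopes, obtained via Mazur's lemma as noted after Definition~\ref{def:genuppergrad}, and extended to the class $\mathcal C$ of \eqref{eq:mathcalC} via the truncation rule \eqref{eq:extendedrelaxed} and locality). Second, for $a,b\geq 0$ one has the elementary convexity estimate $(a+\varepsilon b)^q-a^q\leq q\varepsilon b(a+\varepsilon b)^{q-1}$, obtained by integrating $\tfrac{\d}{\d t}(a+tb)^q=qb(a+tb)^{q-1}$ over $[0,\varepsilon]$. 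Combining these gives
$$
\frac{{\bf C}_q(f+\varepsilon g)-{\bf C}_q(f)}{\varepsilon}\leq \int_X|\nabla g|_{*,q}\bigl(|\nabla f|_{*,q}+\varepsilon|\nabla g|_{*,q}\bigr)^{q-1}\,\d\mm,
$$
and dominated convergence as $\varepsilon\downarrow 0$ yields \eqref{eq:delta1}. The dominating function is $|\nabla g|_{*,q}(|\nabla f|_{*,q}+|\nabla g|_{*,q})^{q-1}$, which lies in $L^1(X,\mm)$ by H\"older with exponents $q,p$.

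For the equality case $g=\phi(f)$, let $M:=\sup|\phi'|$ on the image of $f$; for $|\varepsilon|<1/M$ the function $t\mapsto t+\varepsilon\phi(t)$ is nondecreasing, so the equality case of the chain rule in Proposition~\ref{prop:chain}(b) gives $|\nabla(f+\varepsilon\phi(f))|_{*,q}=(1+\varepsilon\phi'(f))|\nabla f|_{*,q}$ $\mm$-a.e., whence
$$
{\bf C}_q(f+\varepsilon\phi(f))=\frac1q\int_X(1+\varepsilon\phi'(f))^q|\nabla f|_{*,q}^q\,\d\mm
$$
is smooth in $\varepsilon$ near $0$ with derivative $\int_X\phi'(f)|\nabla f|_{*,q}^q\,\d\mm$. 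Comparing the one-sided incremental quotients at $\varepsilon=0^\pm$ with the subdifferential inequality pins down $-\int_X\phi(f)\Delta_q f\,\d\mm=\int_X\phi'(f)|\nabla f|_{*,q}^q\,\d\mm$, and the chain rule applied both to $\phi$ and to the nondecreasing shift $t\mapsto\phi(t)+Mt$ yields the pointwise identity $|\nabla\phi(f)|_{*,q}=|\phi'(f)||\nabla f|_{*,q}$, so the right-hand side of \eqref{eq:delta1} coincides with the left-hand side whenever $\phi'\geq 0$ on the essential range of $f$. \textbf{The main technical obstacle} is pushing subadditivity and the chain rule through the truncation procedure \eqref{eq:extendedrelaxed} (needed when $q>2$ and $g\notin L^q$): this is handled by restricting the argument to level sets $\{|f|,|g|<N\}$ via locality (Proposition~\ref{prop:chain}(a)) and letting $N\to\infty$ by monotone convergence.
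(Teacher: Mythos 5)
Your proposal is correct and follows essentially the same route as the paper: the subdifferential inequality for $-\Delta_q f$, the subadditivity $\relgradq{(f+\eps g)}q\le\relgradq fq+\eps\relgradq gq$ pushed through the truncation \eqref{eq:extendedrelaxed}, a first-order expansion of $\eps\mapsto{\bf C}_q(f+\eps g)$, and for the equality case the chain rule giving $\relgradq{(f+\eps\phi(f))}q=(1+\eps\phi'(f))\relgradq fq$ together with the two-sided incremental quotients. Your explicit dominating function and the remark that the stated ``equality'' really requires $\phi'\ge 0$ (or an absolute value on $\phi'$) are slightly more careful than the paper's own write-up, but the argument is the same.
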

\begin{proof}
Since $-\Delta_q f\in\partial^-{\bf C}_q(f)$ it holds
\[
{\bf C}_q(f)-\int_X \eps g\Delta_q f\,\d\mm\leq {\bf C}_q(f+\eps
g),\qquad\forall g\in L^q(X,\mm),\,\,\eps\in\R.
\]
For $\eps>0$, $\relgradq fq+\eps \relgradq gq$ is a $q$-relaxed
slope of $f+\eps g$ (possibly not minimal) whenever $f$ and $g$ have
$q$-relaxed slope. By truncation, it is immediate to obtain from
this fact that $f,\,g\in\mathcal C$ implies $f+\eps g\in\mathcal C$
and
$$
\relgradq {(f+\eps g)}q \leq\relgradq fq+\eps \relgradq
gq\qquad\text{$\mm$-a.e. in $X$.}
$$
Thus it holds $q{\bf C}_q(f+\eps g)\leq\int_X(\relgradq fq+\eps\relgradq
gq)^q\,\d\mm$ and therefore
\[
-\int_X\eps g\Delta_q f\,\d\mm\leq \frac1q\int_X(\relgradq
fq+\eps\relgradq gq)^q-|\nabla f|_{*,q}^q\,\d\mm=\eps\int_X\relgradq
gq|\nabla f|^{q-1}_{*,q}\,\d\mm+o(\eps).
\]
Dividing by $\eps$ and letting $\eps\downarrow 0$ we get
\eqref{eq:delta1}.

For the second statement we recall that $\relgradq {(f+\eps
\phi(f))}q=(1+\eps \phi'(f))\relgradq fq$ for $|\eps|$ small enough.
Hence
\[
{\bf C}_q(f+\eps \phi(f))-{\bf C}_q(f)= \frac{1}{q}\int_X|\nabla
f|_{*,q}^q\bigl((1+\eps \phi'(f))^q-1\bigr)\,\d\mm=\eps\int_X|\nabla
f|_{*,q}^q \phi'(f)\,\d\mm+o(\eps),
\]
which implies that for any $v\in \partial^-{\bf C}_q(f)$ it holds
$\int_Xv \phi(f)\,\d\mm=\int_X|\nabla f|_{*,q}^q\phi'(f)\,\d\mm$,
and gives the thesis with $v=-\Delta_q f$.
\end{proof}

\begin{proposition}[Some properties of the gradient flow of ${\bf C}_q$]\label{prop:basecal}
Let $f_0\in L^2(X,\mm)$ and let $(f_t)$ be the gradient flow of
${\bf C}_q$ starting from $f_0$. Then the following properties hold.\\*
\noindent (Mass preservation) $\int f_t\,\d\mm=\int f_0\,\d\mm$ for
any $t\geq 0$.\\* \noindent (Maximum principle) If $f_0\leq C$
(resp. $f_0\geq c$) $\mm$-a.e. in $X$, then $f_t\leq C$ (resp
$f_t\geq c$) $\mm$-a.e. in $X$ for any $t\geq 0$.\\* (Energy
dissipation) Suppose $0<c\leq f_0\leq C<\infty$ $\mm$-a.e. in $X$
and $\Phi\in C^2([c,C])$. Then $t\mapsto\int\Phi(f_t)\,\d\mm$ is
locally absolutely continuous in $(0,\infty)$ and it holds
\[
\frac{\d}{\dt}\int \Phi(f_t)\,\d\mm=-\int\Phi''(f_t)|\nabla
f_t|_{*,q}^q\,\d\mm\qquad\text{for a.e. $t\in (0,\infty)$.}
\]
\end{proposition}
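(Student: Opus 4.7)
\emph{Overall strategy.} I would prove the three assertions in the given order, since each uses the previous. Throughout, the key analytic tool is the integration by parts inequality of Proposition~\ref{prop:deltaineq}, whose equality case provides the chain rule for functionals of the form $\int \Phi(f)\,\d\mm$.

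\emph{Mass preservation.} The crucial observation is that, since $\mm$ is finite, the constant function $1$ lies in $L^2(X,\mm)$ and has vanishing $q$-relaxed slope. Indeed, taking $\chi_n$ the Lipschitz cutoff equal to $1$ on $B(x_0,n)$, vanishing outside $B(x_0,2n)$ with ${\rm Lip}(\chi_n)\leq 1/n$, one has $\chi_n\to 1$ in $L^2(X,\mm)$ and $\int_X{\rm Lip}_a^q(\chi_n,\cdot)\,\d\mm\leq n^{-q}\mm(X)\to 0$, so $\relgradq 1q=0$ $\mm$-a.e. Applying \eqref{eq:delta1} with $g=1$ and $g=-1$ then gives $\int_X\Delta_q f_t\,\d\mm=0$. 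Since $t\mapsto f_t\in L^2(X,\mm)$ is locally absolutely continuous with derivative $\Delta_q f_t$, integrating in time and applying Fubini yields $\int_X f_t\,\d\mm=\int_X f_0\,\d\mm$.

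\emph{Maximum principle.} To promote $f_0\leq C$ to $f_t\leq C$, I would test against smooth approximations of $\Psi(r):=\tfrac12((r-C)_+)^2$. Pick bounded, nonnegative, convex $\Psi_\ep\in C^2(\R)$ with $\Psi_\ep\equiv 0$ on $(-\infty,C]$, both $\Psi'_\ep$ and $\Psi''_\ep$ globally bounded, and $\Psi_\ep\nearrow\Psi$ as $\ep\downarrow 0$. The local $L^2$-absolute continuity of $f_\cdot$ together with the Lipschitz regularity of $\Psi_\ep$ give the chain rule in time, and the equality case of Proposition~\ref{prop:deltaineq} applied to $\phi=\Psi'_\ep\in C^1(\R)$ (whose derivative $\Psi''_\ep$ is bounded) yields
\[
\frac{\d}{\d t}\int_X\Psi_\ep(f_t)\,\d\mm
=\int_X\Psi'_\ep(f_t)\Delta_q f_t\,\d\mm
=-\int_X\Psi''_\ep(f_t)|\nabla f_t|_{*,q}^q\,\d\mm\leq 0.
\]
Hence $\int_X\Psi_\ep(f_t)\,\d\mm\leq\int_X\Psi_\ep(f_0)\,\d\mm=0$; sending $\ep\downarrow 0$ via monotone convergence gives $\int_X((f_t-C)_+)^2\,\d\mm=0$, i.e.\ $f_t\leq C$ $\mm$-a.e. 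The lower bound is symmetric.

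\emph{Energy dissipation.} By the maximum principle, $c\leq f_t\leq C$ $\mm$-a.e.\ for every $t\geq 0$, so $\Phi(f_t)$ is well-defined and bounded. Extend $\Phi'\in C^1([c,C])$ to some $\tilde\phi\in C^1(\R)$ with globally bounded derivative; this extension agrees with $\Phi'$ on the image of $f_t$. Then the same chain rule in time plus the equality case of Proposition~\ref{prop:deltaineq} with $\phi=\tilde\phi$ produce
\[
\frac{\d}{\d t}\int_X\Phi(f_t)\,\d\mm
=\int_X\Phi'(f_t)\Delta_q f_t\,\d\mm
=-\int_X\Phi''(f_t)|\nabla f_t|_{*,q}^q\,\d\mm,
\]
with the required local absolute continuity of the left-hand side following from that of $t\mapsto f_t\in L^2$ and the Lipschitz regularity of $\Phi$ on $[c,C]$.

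\emph{Main obstacle.} The central technical point is the chain rule in time
$\tfrac{\d}{\d t}\int_X\Phi(f_t)\,\d\mm=\int_X\Phi'(f_t)\Delta_q f_t\,\d\mm$, which must be derived by combining the Bochner identity $f_t-f_s=\int_s^t\Delta_q f_r\,\d r$ in $L^2$ with the Taylor expansion $\Phi(b)-\Phi(a)=\Phi'(a)(b-a)+O(|b-a|^2)$ and the $L^2$-continuity of $r\mapsto\Phi'(f_r)$. The delicacy lies in ensuring that all error terms are controlled by integrable quantities: for the maximum principle this is achieved by construction through the global boundedness of $\Psi_\ep$ and $\Psi'_\ep$, while for the energy dissipation one relies on the a priori pointwise bounds $c\leq f_t\leq C$ provided by the maximum principle, which is precisely why it must be established first.
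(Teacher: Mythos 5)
Your treatment of mass preservation and of the energy dissipation identity coincides with the paper's: both hinge on \eqref{eq:delta1} applied with $g=\mathbf 1$ (where $\relgradq{\mathbf 1}{q}=0$, which your cutoff argument justifies more carefully than the paper does, since \eqref{eq:facile} is stated only for functions with bounded support) and, respectively, on the equality case of Proposition~\ref{prop:deltaineq} with $\phi=\Phi'$ once the maximum principle confines $f_t$ to $[c,C]$. Where you genuinely diverge is the maximum principle. The paper proves it at the level of the implicit Euler scheme: if $f\leq C$, the truncation $\min\{f^\tau,C\}$ does not increase ${\bf C}_q$ (by locality) and strictly decreases the $L^2$ penalty unless $\mm(\{f^\tau>C\})=0$, so each discrete minimization step preserves the bound, and one concludes by convergence of the scheme as $\tau\downarrow 0$. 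You instead run a Lyapunov argument on $t\mapsto\int\Psi_\ep(f_t)\,\d\mm$ with $\Psi_\ep$ a smooth convex approximation of $\tfrac12((r-C)_+)^2$, exploiting the dissipation identity itself. This is legitimate and self-consistent (no circularity with the third part, since the time chain rule for a $C^1$ nonlinearity with globally Lipschitz derivative and quadratic growth needs no $L^\infty$ bound on $f_t$), but it is more demanding: you must justify $\tfrac{\d}{\d t}\int\Psi_\ep(f_t)\,\d\mm=\int\Psi'_\ep(f_t)\Delta_qf_t\,\d\mm=-\int\Psi''_\ep(f_t)\relgradq{f_t}{q}^q\,\d\mm$ for possibly unbounded $f_t$, whereas the variational argument uses only the locality of ${\bf C}_q$ and no differentiation in time at all. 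One slip to correct: a convex function on $\R$ that is bounded is constant, so your $\Psi_\ep$ cannot simultaneously be bounded, convex, vanish on $(-\infty,C]$ and increase to $\Psi$. Drop the boundedness requirement; what you actually need is $\Psi''_\ep$ globally bounded (hence $\Psi'_\ep$ Lipschitz and $\Psi_\ep$ of quadratic growth), which together with $f_t\in L^2(X,\mm)$ and $\mm(X)<\infty$ makes every integral finite and lets the argument go through unchanged.
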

\begin{proof} (Mass preservation) Just notice that from \eqref{eq:delta1} we get
\[
\left|\frac{\d}{\dt}\int f_t\,\d\mm\right|=\left|\int
\mathbf{1}\cdot\Delta_q f_t\,\d\mm\right|\leq\int\relgradq{\mathbf
1}q{|\nabla f_t|_{*,q}^q}\,\d\mm=0\quad\text{for a.e. $t>0$},
\]
where $\mathbf 1$ is the function identically equal to 1, which has
minimal $q$-relaxed slope equal to 0 by \eqref{eq:facile}.\\*
(Maximum principle) Fix $f\in L^2(X,\mm)$, $\tau>0$ and, according
to the so-called implicit Euler scheme, let $f^\tau$ be the unique
minimizer of
\[
g\qquad\mapsto\qquad {\bf C}_q(g)+\frac{1}{2\tau}\int_X|g-f|^2\,\d\mm.
\]
Assume that $f\leq C$. We claim that in this case $f^\tau\leq C$ as
well. Indeed, if this is not the case we can consider the competitor
$g:=\min\{f^\tau,C\}$ in the above minimization problem. By locality
we get ${\bf C}_q(g)\leq {\bf C}_q(f^\tau)$ and the $L^2$ distance of $f$ and $g$
is strictly smaller than the one of $f$ and $f^\tau$ as soon as
$\mm(\{f^\tau>C\})>0$, which is a contradiction. Starting from
$f_0$, iterating this procedure, and using the fact that the
implicit Euler scheme converges as $\tau\downarrow 0$ (see
\cite{Brezis73}, \cite{Ambrosio-Gigli-Savare08} for details) to the
gradient flow we get the conclusion.\\* (Energy dissipation) Since
$t\mapsto f_t\in L^2(X,\mm)$ is locally absolutely continuous and,
by the maximum principle, $f_t$ take their values in $[c,C]$
$\mm$-a.e., from the fact that $\Phi$ is Lipschitz in $[c,C]$ we get
the claimed absolute continuity statement. Now notice that we have
$\tfrac{\d}{\d t}\int \Phi(f_t) \,\d\mm=\int \Phi'(f_t)\Delta_q
f_t\,\d\mm$ for a.e. $t>0$. Since $\Phi'$ belongs to $C^1([c,C])$,
from \eqref{eq:delta1} with $g=\Phi'(f_t)$ we get the conclusion.
\end{proof}

We start with the following proposition, which relates energy
dissipation to a (sharp) combination of $q$-weak gradients and
metric dissipation in $W_p$.

\begin{proposition}\label{prop:boundweak}
Assume that $\mm$ is a finite measure, let $\mu_t=f_t\mm$ be a curve in $AC^p([0,1],(\prob X,W_p))$. Assume
that for some $0<c<C<\infty$ it holds $c\leq f_t\leq C$ $\mm$-a.e.
in $X$ for any $t\in[0,1]$, and that $f_0$ is Sobolev along $q$-a.e.
curve with $\weakgradq{f_0}q\in L^q(X,\mm)$. Then for all $\Phi\in
C^2([c,C])$ convex it holds
\[
\int \Phi(f_0)\,\d\mm-\int\Phi(f_t)\,\d\mm\leq
\frac1q\iint_0^t\bigl(\Phi''(f_0)|\nabla f_0|_{w,q}\bigr)^qf_s\,\d
s\,\d\mm+\frac1p\int_0^t|\dot\mu_s|^p\,\d s\qquad\forall t>0.
\]
\end{proposition}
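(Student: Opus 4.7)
The plan is to combine the superposition principle for the $W_p$-absolutely continuous curve $(\mu_t)$ with convexity of $\Phi$ and the weak upper gradient property applied to $h:=\Phi'\circ f_0$, closed by H\"older and Young.

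First, represent $(\mu_t)$ by a plan $\ppi\in\prob{C([0,1],X)}$ via Proposition~\ref{prop:lisini}, concentrated on $AC^p([0,1],X)$, with $(\e_s)_\sharp\ppi=\mu_s$ for all $s\in[0,1]$ and $\int|\dot\gamma_s|^p\,\d\ppi=|\dot\mu_s|^p$ for a.e. $s$. (If $\sfd$ is not bounded, replace it first by the equivalent distance $\sfd\wedge 1$, which leaves $W_p$-absolute continuity, $|\dot\mu_s|$ and the notion of $q$-weak upper gradient unchanged on bounded sets.) The identity $(\e_s)_\sharp\ppi=f_s\mm\leq C\mm$ shows that $\ppi$ is a $p$-test plan in the sense of Definition~\ref{def:testplans}.

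Second, convexity of $\Phi$ gives pointwise $\Phi(f_0)-\Phi(f_t)\leq\Phi'(f_0)(f_0-f_t)$, so integrating against $\mm$ and recognizing $f_0\mm=\mu_0$, $f_t\mm=\mu_t$,
\[
\int\Phi(f_0)\,\d\mm-\int\Phi(f_t)\,\d\mm\leq\int h\,\d\mu_0-\int h\,\d\mu_t=\int\bigl(h(\gamma_0)-h(\gamma_t)\bigr)\,\d\ppi(\gamma).
\]

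Third, since $\Phi'\in C^1([c,C])$ with derivative $\Phi''\geq 0$ bounded, whenever $f_0\circ\gamma$ is absolutely continuous so is $h\circ\gamma$, with $|\tfrac{\d}{\d s}(h\circ\gamma)|\leq \Phi''(f_0(\gamma_s))\weakgradq{f_0}q(\gamma_s)|\dot\gamma_s|$ a.e. Hence $h$ is Sobolev along $q$-a.e. curve and $\Phi''(f_0)\weakgradq{f_0}q$ is a $q$-weak upper gradient of $h$. Applying this inequality along $\ppi$-a.e. curve and using Fubini,
\[
\int\bigl(h(\gamma_0)-h(\gamma_t)\bigr)\,\d\ppi\leq\int_0^t\!\!\int\Phi''(f_0)(\gamma_s)\,\weakgradq{f_0}q(\gamma_s)\,|\dot\gamma_s|\,\d\ppi\,\d s.
\]

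Fourth, fix $s$ and apply H\"older's inequality in $\ppi$ with exponents $q$ and $p$: the first factor becomes $\bigl(\int(\Phi''(f_0)\weakgradq{f_0}q)^q f_s\,\d\mm\bigr)^{1/q}$ by the push-forward $(\e_s)_\sharp\ppi=f_s\mm$, and the second factor equals $|\dot\mu_s|$ by \eqref{eq:Lisini}. Young's inequality $ab\leq a^q/q+b^p/p$ then yields
\[
\int\Phi''(f_0)(\gamma_s)\,\weakgradq{f_0}q(\gamma_s)\,|\dot\gamma_s|\,\d\ppi\leq\frac1q\int\bigl(\Phi''(f_0)\weakgradq{f_0}q\bigr)^q f_s\,\d\mm+\frac1p|\dot\mu_s|^p,
\]
and integration in $s\in(0,t)$ gives the desired bound. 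The only mildly delicate step is the chain-rule verification in the third paragraph, which is automatic here since $\Phi''$ is bounded on the range $[c,C]$ of $f_0$; everything else is bookkeeping of push-forwards and standard inequalities.
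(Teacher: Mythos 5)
Your proof is correct and follows essentially the same route as the paper: superposition via Proposition~\ref{prop:lisini}, convexity to reduce to $\Phi'(f_0)(f_0-f_t)$, the chain rule showing $\Phi''(f_0)\weakgradq{f_0}q$ is a $q$-weak upper gradient of $\Phi'(f_0)$, and then Young's inequality (the paper applies Young pointwise along curves rather than H\"older first, but the resulting bound is identical). Your parenthetical care about the boundedness hypothesis in the superposition principle is a reasonable extra precaution that the paper itself glosses over.
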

\begin{proof} Let $\ppi\in\prob{C([0,1],X)}$ be a plan associated to the curve
$(\mu_t)$ as in Proposition~\ref{prop:lisini}. The assumption
$f_t\leq C$ $\mm$-a.e. and the fact that
$\iint_0^1|\dot\gamma_t|^p\,\d
t\,\d\ppi(\gamma)=\int|\dot\mu_t|^p\,\d t<\infty$ guarantee that
$\ppi$ is a $p$-test plan. Now notice that it holds
$\weakgradq{\Phi'(f_0)}q=\Phi''(f_0)\weakgradq{f_0}q$ (it follows
easily from the characterization \eqref{eq:pointwisewug}), thus we
get
\[
\begin{split}
\int \Phi(f_0)-\int\Phi(f_t)\,\d\mm&\leq
\int \Phi'(f_0)(f_0-f_t)\,\d\mm=\int \Phi'(f_0)\circ \e_0-\Phi'(f_0)\circ \e_t\,\d\ppi\\
&\leq\iint_0^t\Phi''(f_0(\gamma_s))\weakgradq{f_0}q(\gamma_s)|\dot\gamma_s|\,\d s\,\d\ppi(\gamma)\\
&\leq\frac1q\iint_0^t\bigl(\Phi''(f_0(\gamma_s))|\nabla
f_0|_{w,q}(\gamma_s)\bigr)^q\,\d s\,\d\ppi(\gamma)
+\frac1p\iint_0^t|\dot\gamma_s|^p\,\d s\,\d\ppi(\gamma)\\
&=\frac1q\iint_0^t\bigl(\Phi''(f_0)|\nabla f_0|_{w,q}\bigr)^qf_s\,\d
s\,\d\mm+\frac1p\int_0^t|\dot\mu_s|^p\,\d s.
\end{split}
\]
\end{proof}

The key argument to achieve the identification is the following
lemma which gives a sharp bound on the $W_p$-speed of the
$L^2$-gradient flow of ${\bf C}_q$. This lemma has been introduced in
\cite{Kuwada10} and then used in
\cite{GigliKuwadaOhta10,Ambrosio-Gigli-Savare11} to study the heat
flow on metric measure spaces.

\begin{lemma}[Kuwada's lemma]\label{le:kuwada}
Assume that $\mm$ is a finite measure, let $f_0\in L^2(X,\mm)$ and let $(f_t)$ be the gradient flow of
${\bf C}_q$ starting from $f_0$. Assume that for some $0<c<C<\infty$ it
holds $c\leq f_0\leq C$ $\mm$-a.e. in $X$, and that $\int
f_0\,\d\mm=1$. Then the curve $t\mapsto \mu_t:=f_t\mm\in\prob X$ is
absolutely continuous w.r.t. $W_p$ and it holds
\[
|\dot\mu_t|^p\leq\int\frac{|\nabla f_t|_{*,q}^q}{f_t^{p-1}}\,\d
\mm\qquad\text{for a.e. $t\in (0,\infty)$.}
\]
\end{lemma}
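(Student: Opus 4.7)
The plan is to combine the Kantorovich dual formulation of $W_p$ via the Hopf--Lax semigroup with the Hamilton--Jacobi subsolution estimate from Section~3 and the integration-by-parts bound for $\Delta_q$ from Proposition~\ref{prop:deltaineq}. The dual formula to exploit is
$$
\int Q_r\phi\,\d\nu-\int\phi\,\d\mu\leq\frac{W_p^p(\mu,\nu)}{p r^{p-1}}\qquad\forall\phi\in\Lip_b(X),\ r>0,
$$
which follows from the constraint $Q_r\phi(y)-\phi(x)\leq\sfd^p(x,y)/(pr^{p-1})$, with equality attained upon taking the supremum over $\phi$ (Kantorovich duality for the cost $\sfd^p/p$). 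The strategy is therefore to bound the left-hand side, uniformly in $\phi$, by $\frac{1}{p}\int_s^t\!\int|\nabla f_r|_{*,q}^q/f_r^{p-1}\,\d\mm\,\d r$, and then invoke duality.

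Fix $\phi\in\Lip_b(X)$, set $r_0:=t-s$ and consider $\eta(r):=\int Q_{r-s}\phi\,f_r\,\d\mm$ on $[s,t]$. I would first verify that $\eta$ is locally absolutely continuous: Proposition~\ref{prop:timederivative} gives that $(r,x)\mapsto Q_{r-s}\phi(x)$ is Lipschitz in $r$ uniformly in $x$ (with a bound on $\frac{\d}{\d r}Q_{r-s}\phi$), while $r\mapsto f_r$ is locally absolutely continuous in $L^2(X,\mm)$ by the theory of Hilbertian gradient flows, and the maximum principle in Proposition~\ref{prop:basecal} keeps $f_r$ trapped in $[c,C]$. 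Combining these, one can differentiate under the integral:
$$
\eta'(r)=\int\bigl(\tfrac{\d}{\d r}Q_{r-s}\phi\bigr) f_r\,\d\mm+\int Q_{r-s}\phi\,\Delta_q f_r\,\d\mm.
$$
For the first term I would apply the HJ-subsolution inequality \eqref{eq:hjbsusbis} to obtain $\frac{\d}{\d r}Q_{r-s}\phi\leq -\frac{1}{q}\Lip_a^q(Q_{r-s}\phi,\cdot)$. For the second term I would invoke Proposition~\ref{prop:deltaineq} with $g=Q_{r-s}\phi$, combined with \eqref{eq:facile} which gives $|\nabla Q_{r-s}\phi|_{*,q}\leq\Lip_a(Q_{r-s}\phi,\cdot)$, to get
$$
\int Q_{r-s}\phi\,\Delta_q f_r\,\d\mm\leq\int\Lip_a(Q_{r-s}\phi,\cdot)\,|\nabla f_r|_{*,q}^{q-1}\,\d\mm.
$$
Then Young's inequality $ab\leq a^q/q+b^p/p$ with $a=\Lip_a(Q_{r-s}\phi,\cdot)\,f_r^{1/q}$ and $b=|\nabla f_r|_{*,q}^{q-1}/f_r^{1/q}$ (using $(q-1)p=q$ and $p/q=p-1$) produces exactly the cancellation
$$
\eta'(r)\leq-\tfrac{1}{q}\!\int\!\Lip_a^q(Q_{r-s}\phi,\cdot) f_r\,\d\mm+\tfrac{1}{q}\!\int\!\Lip_a^q(Q_{r-s}\phi,\cdot) f_r\,\d\mm+\tfrac{1}{p}\!\int\!\frac{|\nabla f_r|_{*,q}^q}{f_r^{p-1}}\d\mm,
$$
so after integration in $r\in[s,t]$ we obtain the desired uniform bound on $\int Q_{t-s}\phi\,\d\mu_t-\int\phi\,\d\mu_s$.

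Taking the supremum over $\phi\in\Lip_b(X)$ and using Kantorovich duality yields
$$
\frac{W_p^p(\mu_s,\mu_t)}{p(t-s)^{p-1}}\leq\frac{1}{p}\int_s^t\!\int\!\frac{|\nabla f_r|_{*,q}^q}{f_r^{p-1}}\,\d\mm\,\d r.
$$
Setting $g(r):=\int|\nabla f_r|_{*,q}^q/f_r^{p-1}\,\d\mm$, which is in $L^1_{\rm loc}$ by the energy dissipation identity of Proposition~\ref{prop:basecal} applied to $\Phi(u)=u^{2-p}/((2-p)(1-p))$ (or, when $p=2$, $\Phi(u)=-\log u$), we deduce $W_p^p(\mu_s,\mu_t)\leq(t-s)^{p-1}\int_s^t g(r)\,\d r$. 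From this Hölder-type estimate, $\mu$ is absolutely continuous in $(\prob X,W_p)$ and dividing by $(t-s)^p$ and taking $t\downarrow s$ at Lebesgue points of $g$ gives the pointwise inequality $|\dot\mu_s|^p\leq g(s)$ for a.e.\ $s$.

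The main technical obstacles I anticipate are: (i) rigorously justifying differentiation under the integral sign in $\eta'(r)$, which requires combining the Lipschitz-in-$r$ bound on $Q_{r-s}\phi$ (Proposition~\ref{prop:timederivative}) with the $L^2$-absolute continuity of $r\mapsto f_r$ and perhaps a Leibniz-rule argument that handles the fact that the product-rule identity is only available a.e.; (ii) checking that Proposition~\ref{prop:deltaineq} applies with $g=Q_{r-s}\phi$, which uses that $Q_{r-s}\phi$ is Lipschitz with bounded support properties—here the inequality $\relgradq{Q_{r-s}\phi}{q}\leq\Lip_a(Q_{r-s}\phi,\cdot)$ from \eqref{eq:facile} is crucial; (iii) deducing $W_p$-absolute continuity (not just Hölder continuity) from the quantitative estimate on $W_p^p(\mu_s,\mu_t)$, which proceeds through the standard characterization of $AC^p$ curves.
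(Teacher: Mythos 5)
Your argument is essentially the paper's proof: the same interpolation $r\mapsto\int Q\varphi\, f_r\,\d\mm$, the Hamilton--Jacobi subsolution bound of Theorem~\ref{thm:subsol}, the integration-by-parts inequality \eqref{eq:delta1} combined with \eqref{eq:facile}, and Young's inequality producing the exact cancellation of the $\Lip_a^q$ terms; your parametrization $Q_{r-s}\phi$ versus the paper's $Q_{\tau/\ell}\varphi$ merely moves the factor $\ell^{p-1}$ from the weighted Young inequality into the rescaled duality formula, and the concluding Lebesgue-point argument is identical. The one step the paper carries out in detail that you only flag as an ``obstacle'' is the reduction of the Kantorovich supremum from ${\rm Lip}_b(X)$ to nonnegative Lipschitz $\varphi$ with \emph{bounded support} (the cutoff argument leading to \eqref{eq:dualityQbounded}), which is what guarantees that $Q_{r-s}\phi$ lies in $D({\bf C}_q)$ so that \eqref{eq:facile} and Proposition~\ref{prop:deltaineq} are actually applicable.
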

\begin{proof}
We start from the duality formula \eqref{eq:dualitabase} (written
with $\varphi=-\psi$)
\begin{equation}\label{eq:dualityQ}
\frac{W_p^p(\mu,\nu)}p=\sup_{\varphi\in{\rm Lip}_b(X)}\int_X
Q_1\varphi\, d\nu-\int_X\varphi\,d\mu.
\end{equation}
where $Q_t\varphi$ is defined in \eqref{eq:Nicola1} and
\eqref{eq:Nicola2}, so that $Q_1\varphi=\psi^c$. 

We prove that the duality formula \eqref{eq:dualityQ} is still true if the supremum in the right-hand side is taken over nonnegative 
and bounded $\varphi \in {\rm Lip}(X)$ with bounded support
\begin{equation}\label{eq:dualityQbounded}
\frac{W_p^p(\mu,\nu)}p=\sup\Big\{\int_X
Q_1\varphi\, d\nu-\int_X\varphi\,d\mu : \varphi \in {\rm Lip}(X), \, \varphi\geq 0,\text{ with bounded support} \Big\}.
\end{equation}
The duality formula \eqref{eq:dualityQ} holds also if the supremum is taken over bounded nonnegative $\varphi$ in ${\rm Lip}(X)$ up to a translation.  
In order to prove the equivalence it is enough to show that for every $\varphi \in {\rm Lip}(X)$ bounded and nonnegative
\begin{equation}\label{eqn:conv-phi}
\lim_{r \to \infty} \Big\{ \int_X
Q_1[\chi_r\varphi]\, d\nu-\int_X\chi_r\varphi\,d\mu \Big\}= \int_X
Q_1\varphi\, d\nu-\int_X\varphi\,d\mu,
\end{equation}
where $\chi_r$ is a cutoff function which is nonnegative, $1$ in $B(x_0,r)$ and $0$ outside $B(x_0,r+1)$ for some $x_0 \in X$ fixed.
By the dominated convergence theorem we have that
$$\lim_{r \to \infty}\int_X\chi_r\varphi\,d\mu=\int_X\varphi\,d\mu.$$

\null
From \eqref{eq:Nicola2} it follows that $Q_1[\chi_r \varphi]$ is nonnegative and 
$$
Q_1[\chi_r \varphi](x)=\inf_{y\in X} \left\{\chi_r(y)\varphi(y)+\frac{\sfd^p(x,y)}{p} \right\} \leq \inf_{y\in X} \left\{\varphi(y)+\frac{\sfd^p(x,y)}{p} \right\} = Q_1\varphi(x).
$$
Moreover, setting $B_{r,\varphi}$ the ball of center $x_0$ and radius $r-({\rm Lip(\varphi)})^{1/p}$, we have that  
$Q_1[\chi_r \varphi]= Q_1\varphi$ in  $B_{r,\varphi}$. Hence
\begin{equation}\label{eqn:conv-Qphi}
 \Big| \int_X Q_1[\chi_r\varphi]\, d\nu - \int_X
Q_1\varphi\, d\nu \Big| = \int_{B^c_{r,\varphi}} |Q_1[\chi_r\varphi] -
Q_1\varphi| \, d\nu \leq 2  \int_{B^c_{r,\varphi}} Q_1\varphi \, d\nu
\end{equation}
and the last term goes to $0$ as $r\to \infty$.
From \eqref{eqn:conv-phi} and \eqref{eqn:conv-Qphi} we obtain \eqref{eq:dualityQbounded}.

Fix now $\varphi\in{\rm
Lip}(X)$ nonnegative with bounded support and recall that $Q_t \phi$ has bounded support for every $t>0$ 
and that (Proposition~\ref{prop:timederivative}) the
map $t\mapsto Q_t\varphi$ is Lipschitz with values in $C(X)$, in
particular also as a $L^2(X,\mm)$-valued map.

Fix also $0\leq t<s$, set $\ell=(s-t)$ and recall that since $(f_t)$
is a gradient flow of ${\bf C}_q$ in $L^2(X,\mm)$, the map $[0,\ell]\ni
\tau\mapsto f_{t+\tau}$ is absolutely continuous with values in
$L^2(X,\mm)$. Therefore, since both factors are uniformly bounded,
the map $[0,\ell]\ni\tau\mapsto Q_{\frac\tau\ell}\varphi f_{t+\tau}$
is absolutely continuous with values in $L^2(X,\mm)$. In addition,
the equality
\[
\frac{Q_{\frac{\tau+h}\ell}\varphi
f_{t+\tau+h}-Q_{\frac{\tau}\ell}\varphi
f_{t+\tau}}{h}=f_{t+\tau}\frac{Q_{\frac{\tau+h}\ell}-Q_{\frac\tau\ell}\varphi
}{h}+Q_{\frac{\tau+h}\ell}\varphi\frac{ f_{t+\tau+h}-
f_{t+\tau}}{h},
\]
together with the uniform continuity of $(x,\tau)\mapsto
Q_{\frac\tau\ell}\varphi(x)$ shows that the derivative of
$\tau\mapsto Q_{\frac\tau\ell}\varphi f_{t+\tau}$ can be computed
via the Leibniz rule.

We have:
\begin{equation}
\label{eq:step1}
\begin{split}
\int_X Q_1\varphi\,\d\mu_s-\int_X\varphi \,\d\mu_t&
=\int Q_1\varphi f_{t+\ell}\,\d\mm-\int_X\varphi f_t\,\d\mm
=\int_X\int_0^\ell\frac{\d}{\d\tau}\big(Q_{\frac\tau\ell}\varphi f_{t+\tau}\big)d\tau \,\d\mm\\
&\leq\int_X\int_0^\ell 
-\frac{{\rm Lip}^q_a(Q_{\frac\tau\ell}\varphi,\cdot)}{q\ell}f_{t+\tau}+
Q_{\frac\tau\ell}\varphi \Delta_q f_{t+\tau}\,\d\tau \,\d\mm,\\
\end{split}
\end{equation}
having used Theorem~\ref{thm:subsol}.

Observe that by inequalities \eqref{eq:delta1} and \eqref{eq:facile}
we have
\begin{equation}
\label{eq:sarannouguali}
\begin{split}
\int_X Q_{\frac\tau\ell}\varphi \Delta_q f_{t+\tau} \,\d\mm& \leq
\int_X\relgradq{Q_{\frac\tau\ell}\varphi}q|\nabla
f_{t+\tau}|_{*,q}^{q-1}\,\d\mm\leq \int_X{\rm Lip}_a(
Q_{\frac\tau\ell}\varphi)|\nabla f_{t+\tau}|_{*,q}^{q-1} \,\d\mm\\
&\leq \frac1{q\ell}\int_X{\rm Lip}_a^q(Q_{\frac\tau\ell}\varphi,\cdot)
f_{t+\tau}d\mm+\frac{\ell^{p-1}} p\int_X\frac{|\nabla
f_{t+\tau}|_{*,q}^q}{f_{t+\tau}^{p-1}}\,\d\mm.
\end{split}
\end{equation}
Plugging this inequality in \eqref{eq:step1}, we obtain
\[
\int_X Q_1\varphi \,\d\mu_s-\int_X\varphi \,\d\mu_t\leq
\frac{\ell^{p-1}} p\int_0^\ell\int_X\frac{|\nabla
f_{t+\tau}|_{*,q}^q}{f_{t+\tau}^{p-1}}\,\d\mm.
\]
This latter bound does not depend on $\varphi$, so from
\eqref{eq:dualityQ} we deduce
\[
W_p^p(\mu_t,\mu_s)\leq \ell^{p-1}\int_0^\ell\int_X\frac{|\nabla
f_{t+\tau}|_{*,q}^q}{f^{p-1}_{t+\tau}}\,\d\mm.
\]
At Lebesgue points of $r\mapsto\int_X|\nabla
f_r|_{*,q}^q/f_r^{p-1}\,\d\mm$ where the metric speed exists we
obtain the stated pointwise bound on the metric speed.
\end{proof}

\section{Equivalence of gradients}\label{sequivalence}

In this section we assume that $(X,\sfd)$ is complete and separable, and that
$\mm$ is finite on bounded sets. We prove the equivalence of weak gradients, considering first
the simpler case of a finite measure $\mm$. 

\begin{theorem}\label{thm:graduguali}
Let $f\in L^q(X,\mm)$. Then $f$ has a $q$-relaxed slope if and only if
$f$ has a $q$-weak upper gradient and $\relgradq fq=\weakgradq fq$ $\mm$-a.e. in $X$.
\end{theorem}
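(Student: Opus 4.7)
The nontrivial direction is to prove that if $f \in L^q(X,\mm)$ admits a $q$-weak upper gradient then it has a $q$-relaxed slope and $\relgradq fq \leq \weakgradq fq$ $\mm$-a.e.; the opposite pointwise bound \eqref{allinequalities1} follows from Theorem~\ref{thm:stabweak} applied to the asymptotic Lipschitz constants of the recovery sequence given by Proposition~\ref{prop:easy}. First I would reduce to $\mm$ finite by multiplying $f$ by Lipschitz cutoffs supported on bounded sets, and then, by truncation and locality (applying the result to a suitable shift of $\min(\max(f,-N),N)$ and then letting $N \to \infty$), to the case $0 < c \leq f \leq C < \infty$ $\mm$-a.e., where in particular $f \in L^2(X,\mm)$.

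The heart of the proof is the bounded case. Consider the gradient flow $(f_t)$ of ${\bf C}_q$ starting at $f$. By the maximum principle (Proposition~\ref{prop:basecal}) one has $c \leq f_t \leq C$ for every $t$; by Kuwada's Lemma~\ref{le:kuwada} the curve $\mu_t := f_t\mm$ lies in $AC^p([0,T];(\prob X,W_p))$ with $|\dot\mu_t|^p \leq \int_X |\nabla f_t|_{*,q}^q / f_t^{p-1}\,\d\mm$. I would pick $\Phi \in C^2([c,C])$ convex with $\Phi''(r)=r^{1-p}$; then the energy dissipation identity in Proposition~\ref{prop:basecal} rewrites the left-hand side of the estimate in Proposition~\ref{prop:boundweak} as $\int_0^t\int_X |\nabla f_s|_{*,q}^q / f_s^{p-1}\,\d\mm\,\d s$, while the algebraic identity $q(1-p)=-p$ and Kuwada's bound turn its right-hand side into
\[
\frac{1}{q}\int_0^t\!\int_X \frac{|\nabla f|_{w,q}^q\,f_s}{f^p}\,\d\mm\,\d s + \frac{1}{p}\int_0^t\!\int_X \frac{|\nabla f_s|_{*,q}^q}{f_s^{p-1}}\,\d\mm\,\d s.
\]
Since $1-1/p=1/q$, the second summand is absorbed by the left-hand side, and one obtains
\[
\int_0^t\!\int_X \frac{|\nabla f_s|_{*,q}^q}{f_s^{p-1}}\,\d\mm\,\d s \leq \int_0^t\!\int_X \frac{|\nabla f|_{w,q}^q\,f_s}{f^p}\,\d\mm\,\d s.
\]

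Dividing by $t$ and sending $t\downarrow 0$: on the right, dominated convergence (with $c\leq f_s,f\leq C$ and $f_s\to f$ in $L^2$) gives the limit $\int_X |\nabla f|_{w,q}^q/f^{p-1}\,\d\mm$; on the left, the trivial estimate $\liminf_{t\downarrow 0} t^{-1}\int_0^t g(s)\,\d s \geq \liminf_{s\downarrow 0} g(s)$ combined with the $L^2(X,\mm)$-lower semicontinuity of Remark~\ref{rem:basiclsc} yields the lower bound $\int_X |\nabla f|_{*,q}^q/f^{p-1}\,\d\mm$. Finiteness of the right-hand side then forces $f$ to have a $q$-relaxed slope, and combined with the pointwise bound $\weakgradq fq \leq \relgradq fq$ and the fact that $f^{p-1}>0$ $\mm$-a.e., the resulting integral inequality forces the $\mm$-a.e.\ nonnegative integrand $|\nabla f|_{*,q}^q-|\nabla f|_{w,q}^q$ to vanish pointwise, completing the bounded case.

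The main obstacle is precisely the sharp absorption argument of the second paragraph: the Hilbertian dissipation identity, Kuwada's metric-speed bound and the upper estimate of Proposition~\ref{prop:boundweak} are each borderline sharp, and cohere only through the precise choice $\Phi''(r)=r^{1-p}$ together with the conjugate identity $1/p+1/q=1$; it is also essential that $\relgradq fq$ is defined via the asymptotic Lipschitz constant, so that Theorem~\ref{thm:stabweak} applies to it and the opposite pointwise inequality \eqref{allinequalities1} holds, closing the loop.
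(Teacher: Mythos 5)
Your proposal is correct and follows essentially the same route as the paper's proof: reduction to $0<c\leq f\leq C$ with $\mm$ finite, the gradient flow of ${\bf C}_q$ with the choice $\Phi''(r)=r^{1-p}$, the combination of the energy dissipation identity with Proposition~\ref{prop:boundweak} and Kuwada's Lemma~\ref{le:kuwada}, absorption of the $1/p$ term, and the lower semicontinuity of Remark~\ref{rem:basiclsc} as $t\downarrow 0$, closed by the opposite inequality \eqref{allinequalities1}. The only cosmetic difference is that the paper proves the finite-measure case first and then extends to measures finite on bounded sets via cutoffs and locality on balls, whereas you present the localization as an upfront reduction.
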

\begin{proof} One implication and the inequality $\geq$ have already been established in \eqref{allinequalities1}.
We prove the converse ones first for finite measures, and then in the general case.

So, assume for the moment that $\mm(X)<\infty$.
Up to a truncation argument and addition of a constant, we can
assume that $0<c\leq f\leq C<\infty$ $\mm$-a.e. for some $0<c\leq
C<\infty$. Let $(g_t)$ be the $L^2$-gradient flow of ${\bf C}_q$ starting
from $g_0:=f$ and let us choose $\Phi\in C^2([c,C])$ in such a way
that $\Phi''(z)=z^{1-p}$ in $[c,C]$. Recall that $c\leq g_t\leq C$
$\mm$-a.e. in $X$ and that from Proposition~\ref{prop:basecal} we
have
\begin{equation}\label{eq:Amerio}
\int\Phi(g_0)\,\d\mm-\int\Phi(g_t)\,\d\mm=\int_0^t\int_X\Phi''(g_s)|\nabla
g_s|_{*,q}^q\d\mm\,\d s\qquad\forall t\in [0,\infty).
\end{equation}
In particular this gives that $\int_0^\infty\int_X\Phi''(g_s)|\nabla
g_s|_{*,q}^q\,\d\mm\,\d s$ is finite. Setting $\mu_t=g_t\mm$,
Lemma~\ref{le:kuwada} and the lower bound on $g_t$ give that
$\mu_t\in AC^p\bigl((0,\infty),(\prob X,W_p)\bigr)$, so that
Proposition~\ref{prop:boundweak} and Lemma~\ref{le:kuwada} yield
\[
\int \Phi(g_0)\,\d\mm-\int
\Phi(g_t)\,\d\mm\leq\frac1q\int_0^t\int_X\bigl(\Phi''(g_0)|\nabla
g_0|_{w,q}\bigr)^q g_s\,\d\mm\,\d s+\frac1p\int_0^t\int_X\frac{|\nabla
g_s|_{*,q}^q}{g_s^{p-1}}\,\d\mm\,\d s.
\]
Hence, comparing this last expression with \eqref{eq:Amerio}, our
choice of $\Phi$ gives
\[
\frac1q\iint_0^t\,\frac{|\nabla g_s|_{*,q}^q}{g_s^{p-1}}\d
s\,\d\mm\leq\int_0^t\int_X\frac1q \bigl(\frac{|\nabla
g_0|_{w,q}}{g_0^{p-1}}\bigr)^q g_s\,\d\mm\,\d s.
\]
Now, the bound $f\geq c>0$ ensures $\Phi''(g_0)|\nabla g_0|_{*,q}\in
 L^q(X,\mm)$. In addition, the
maximum principle together with the convergence of $g_s$ to $g_0$ in
$L^2(X,\mm)$ as $s\downarrow 0$ grants that the convergence is also
weak$^*$ in $L^\infty(X,\mm)$, therefore
\[
\limsup_{t\downarrow 0}\frac{1}{t}\iint_0^t\,\frac{|\nabla
g_s|_{*,q}^q}{g_s^{p-1}}\d s\,\d\mm\leq\int_X \frac{|\nabla
g_0|_{w,q}^q}{g_0^{q(p-1)}}g_0\d\mm=\int_X \frac{|\nabla
g_0|_{w,q}^q}{g_0^{p-1}}\,\d\mm.
\]
The lower semicontinuity property stated in
Remark~\ref{rem:basiclsc} with $\alpha=p-1$ then gives
\[
\int_X \frac{|\nabla g_0|_{*,q}^q}{g_0^{p-1}}\,\d\mm\leq \int_X
\frac{|\nabla g_0|_{w,q}^q}{g_0^{p-1}}\,\d\mm.
\]
This, together with the inequality $\weakgradq {g_0}q\leq\relgradq
{g_0}q$ $\mm$-a.e. in $X$, gives the conclusion.

Finally, we consider the general case of a measure $\mm$ finite on bounded sets. Let $X_n=\overline{B}(x_0,n)$, $n>1$, and notice that trivially it holds
\begin{equation}\label{eq:localoca}
|\nabla f|_{X_n,w,q}\leq|\nabla f|_{w,q}\qquad\text{$\mm$-a.e. in $X_n$,}
\end{equation}
because the class of test plans relative to $X_n$ is smaller. 
Hence, if we apply the equivalence result in $X_n$, we can find Lipschitz functions $f_k:X_n\to\R$ which converge to
$f$ in $L^q(X_n,\mm)$ and satisfy ${\rm Lip}_{a,X_n}(f_k,\cdot)\to |\nabla f|_{w,X_n}$ in $L^q(X_n,\mm)$.
If $\psi_n:X\to [0,1]$ is a $2$-Lipschitz function identically equal to 1 on $\overline{B}(x_0,n-1)$ and with support
contained in $B(0,n-\tfrac{1}{4})$, the functions $\psi_n f_k$ can obviously be thought as Lipschitz functions with bounded support
on $X$ and satisfy (thanks to \eqref{eq:leibn-relgradq})
$$
{\rm Lip}_a(\psi_nf_k)\leq\psi_n{\rm Lip}_{a,X_n}(f_k)+2\chi_n|f_k|,
$$
where $\chi_n$ is the characteristic function of $\overline{B}(0,n)\setminus B(0,n-1)$. 
Passing to the limit as $k\to\infty$ (notice that multiplication by $\psi_n$ allows to turn $L^q(X_n,\mm)$ convergence of
the asymptotic Lipschitz constants to $L^q(X,\mm)$ convergence, and similarly for $f_k$) 
it follows that $\psi_n f$ has $q$-relaxed slope, and that
$$
|\nabla (\psi_n f)|_{*,q}\leq |\nabla f|_{X_n,w,q}+2\chi_n|f|\qquad\text{$\mm$-a.e. in $X$.}
$$
Invoking \eqref{eq:localoca} we obtain
$$
|\nabla (\psi_n f)|_{*,q}\leq |\nabla f|_{w,q}+2\chi_n|f|\qquad\text{$\mm$-a.e. in $X$.}
$$
Eventually we let $n\to\infty$ to conclude, by a diagonal argument, that $f$ has a $q$-relaxed slope
and that $|\nabla f|_{*,q}\leq |\nabla f|_{w,q}$ $\mm$-a.e. in $X$.
\end{proof}

The proof of the previous result provides, by a similar argument, the following locality result.

\begin{proposition}\label{prop:locaopen}
If $f$ has a $q$-weak upper gradient and $A\subset X$ is open, then
\begin{equation}\label{eq:locaopen1}
|\nabla f|_{\overline{A},w,q}=|\nabla f|_{w,q}\qquad\text{$\mm$-a.e. in $A$.}
\end{equation}
\end{proposition}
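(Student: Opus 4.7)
The easy direction $|\nabla f|_{\overline A,w,q}\le |\nabla f|_{w,q}$ actually holds $\mm$-a.e.\ on all of $\overline A$, and is immediate: viewing $C([0,1],\overline A)\subset C([0,1],X)$, every $p$-test plan on the subspace $(\overline A,\sfd,\mm)$ is automatically a $p$-test plan on $(X,\sfd,\mm)$, since the bound $(\e_t)_\sharp\ppi\le C\mm$ is preserved. Hence the restriction of $|\nabla f|_{w,q}$ to $\overline A$ is a $q$-weak upper gradient of $f$ on $\overline A$, and minimality of $|\nabla f|_{\overline A,w,q}$ gives the claim. The real content is the opposite inequality on the open set $A$, and for this the plan is to mimic the concluding paragraph of the proof of Theorem~\ref{thm:graduguali}, letting $\overline A$ play the role of the closed ball $X_n$ and using a cutoff adapted to $A$.

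Concretely, after a standard truncation I apply Theorem~\ref{thm:graduguali} on the complete separable space $(\overline A,\sfd,\mm)$, on which $\mm$ is still finite on bounded sets, together with Proposition~\ref{prop:easy}, to produce Lipschitz functions $f_k:\overline A\to\R$ with bounded support satisfying $f_k\to f$ in $L^q(\overline A,\mm)$ and $\Lip_{a,\overline A}(f_k,\cdot)\to |\nabla f|_{\overline A,w,q}$ in $L^q(\overline A,\mm)$. For fixed $x_0\in X$ and $n\in\N$ I then choose a Lipschitz cutoff $\psi_n:X\to[0,1]$ of the form
$$
\psi_n(x)=\min\bigl(1,\max(0,n\,{\rm dist}(x,X\setminus A)-1)\bigr)\,\eta_n(\sfd(x,x_0)),
$$
where $\eta_n:[0,\infty)\to[0,1]$ is a $1$-Lipschitz ball cutoff equal to $1$ on $[0,n]$ and vanishing on $[n+1,\infty)$. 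By construction $\psi_n\equiv 1$ on the open set $A_n:=\{x\in B(x_0,n):{\rm dist}(x,X\setminus A)>2/n\}$, $\psi_n\equiv 0$ outside $A\cap B(x_0,n+1)$, and $\Lip(\psi_n)\le 2n$. The elementary bound $\psi_n(y)\le n\,\sfd(y,z)$ for $y\in\overline A$ and $z\in X\setminus A$ ensures that the product $\psi_n f_k$, extended by $0$ on $X\setminus\overline A$, is Lipschitz on $X$ with bounded support.

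The Leibniz inequality~\eqref{eq:leibn-relgradq} for $\Lip_a$ gives, $\mm$-a.e.\ on $X$, the pointwise bound
$$
\Lip_a(\psi_n f_k,x)\le \psi_n(x)\,\Lip_{a,\overline A}(f_k,x)+2n\,|f_k(x)|\,\mathbf 1_{X\setminus A_n}(x),
$$
since $\psi_n$ is locally constant on the open set $A_n$ and hence $\Lip_a(\psi_n,\cdot)=0$ there. Letting $k\to\infty$, the strong $L^q$-convergence of the right-hand side together with the weak-convergence characterization of relaxed slopes (Definition~\ref{def:genuppergrad} and the Mazur-lemma remark following it) yields that $\psi_n f$ has a $q$-relaxed slope on $X$ with
$$
|\nabla(\psi_n f)|_{*,q}\le \psi_n\,|\nabla f|_{\overline A,w,q}+2n\,|f|\,\mathbf 1_{X\setminus A_n}\qquad \mm\text{-a.e.\ in }X.
$$
On $A_n$ the right-hand side collapses to $|\nabla f|_{\overline A,w,q}$ and $\psi_n f\equiv f$ on a neighborhood of every point; the locality property in Proposition~\ref{prop:chain}(a) therefore gives $|\nabla f|_{*,q}\le |\nabla f|_{\overline A,w,q}$ $\mm$-a.e.\ in $A_n$. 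Since $A_n\uparrow A$ as $n\to\infty$, the inequality extends $\mm$-a.e.\ to $A$, and the equivalence $|\nabla f|_{*,q}=|\nabla f|_{w,q}$ from Theorem~\ref{thm:graduguali} closes the argument.

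The core technical hurdle is the simultaneous design of $\psi_n$: it must vanish on $X\setminus A$ so that $\psi_n f_k$ stays Lipschitz across $\partial A$ after extension by zero (which forces its size near $\partial A$ to be controlled by ${\rm dist}(\cdot,X\setminus A)$), yet it must be locally constant on a family of open sets exhausting $A$ so that the parasitic term $2n\,|f|\,\mathbf 1_{X\setminus A_n}$ disappears exactly where the bound is needed. The distance-based choice above reconciles these competing requirements, after which the proof tracks the last paragraph of Theorem~\ref{thm:graduguali} essentially verbatim.
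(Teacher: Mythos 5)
Your proof is correct and follows essentially the same route as the paper's: a Lipschitz cutoff supported in $A$ and equal to $1$ on an exhausting family of open subsets of $A$, the Leibniz inequality \eqref{eq:leibn-relgradq} applied to the approximating sequence from Proposition~\ref{prop:easy} on $\overline A$, locality of the relaxed slope where the cutoff equals $1$, and the identification of weak and relaxed gradients on both $\overline A$ and $X$. The only difference is that the paper uses a generic cutoff attached to an arbitrary open set $B$ with ${\rm dist}(B,X\setminus A)>0$ rather than your explicit distance-based construction; the substance is identical.
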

\begin{proof}
We already noticed that, by definition, $|\nabla f|_{\overline{A},w,q}\leq |\nabla f|_{w,q}$ $\mm$-a.e. in $\overline{A}$.
Let $B\subset A$ be an open set with ${\rm dist}(B,X\setminus A)>0$ and let $\psi:X\to [0,1]$ be a Lipschitz cut-off function 
with support contained in $A$ and equal to $1$ on a neighbourhood of $B$. If $f_n\in\Lip(\overline{A})$ have bounded support,
converge to $f$ in $L^q(\overline{A},\mm)$ and satisfy $\Lip_a(f_n,\cdot)\to|\nabla f|_{\overline{A},*,q}$ in $L^q(\overline{A},\mm)$, 
we can consider the functions $f_n\psi$ and use \eqref{eq:leibn-relgradq} to obtain that $\psi|\nabla f|_{\overline{A},*,q}+\Lip(\psi)\chi|f|$ is a $q$-relaxed slope
of $f$ in $X$, where $\chi$ is the characteristic function of the set $\overline{\{\psi<1\}}$. Since $\chi\equiv 0$ on $B$ it follows that
$$
|\nabla f|_{*,q}\leq |\nabla f|_{\overline{A},*,q}\qquad\text{$\mm$-a.e. in $B$.}
$$ 
Letting $B\uparrow A$ and using the identification of gradients the proof is achieved.
\end{proof}

In particular, since any open set $A\subset X$ can be written as the increasing union of open subsets $A_n$ with $\overline{A}_n\subset A$,
it will make sense to speak of the weak gradient on $A$ of a function $f:A\to\R$ having a weak gradient when restricted to $\overline{A}_n$
for all $n$; suffices to define $|\nabla f|_{w,q}:A\to [0,\infty)$ by
\begin{equation}\label{eq:locaopen}
|\nabla f|_{w,q}:=|\nabla f|_{\overline{A}_n,w,q}\qquad\text{$\mm$-a.e. on $A_n$}
\end{equation}
and the definition is well posed $\mm$-a.e. in $X$ thanks to Proposition~\ref{prop:locaopen}.

\section{Reflexivity of $W^{1,q}(X,\sfd,\mm)$, $1<q<\infty$}

In this section we prove that the Sobolev spaces $W^{1,q}(X,\sfd,\mm)$ are reflexive when $1<q<\infty$, $(X,\sfd)$ is doubling and separable,
and $\mm$ is finite on bounded sets.
Our strategy is to build, by a finite difference scheme, a family of functionals which provide a discrete approximation of Cheeger's energy.
The definition of the approximate functionals relies on the existence of nice partitions of doubling metric spaces. 

\begin{lemma}\label{lemma:dec}
For every $\delta >0$ there exist $\ell_\delta\in\N\cup\{\infty\}$ and pairs 
set-point $(A_i^{\delta},  z^{\delta}_i)$, $0\leq i<\ell_\delta$, where $A_i^{\delta} \subset X$ are Borel sets and $z_i^{\delta}\in X$, 
satisfying:
\begin{itemize}
 \item[(i)] the sets $A_i^\delta$, $0\leq i<\ell_\delta$, are a partition of $X$ and $\sfd(z_i^\delta,z_j^\delta)>\delta$ whenever $i\neq j$;
 \item[(ii)] $A_i^{\delta}$ are comparable to balls centered at $z_i^\delta$, namely
 $$B \left(z_i^{\delta},\frac{\delta}{3}\right) \subset A_i^{\delta} \subset B \left(z_i^{\delta},\frac 54 \delta\right).$$
\end{itemize}
\end{lemma}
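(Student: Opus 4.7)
The plan is to obtain the centers $\{z_i^\delta\}$ as a maximal $\delta$-separated subset of $X$ and then to define the $A_i^\delta$ as a Voronoi-type partition around these centers, breaking ties by the index ordering.

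First I would apply Zorn's lemma to the family of subsets $S\subset X$ satisfying $\sfd(z,z')>\delta$ for every pair of distinct $z,z'\in S$, ordered by inclusion; the union of a chain of such sets still has pairwise separation strictly greater than $\delta$, so Zorn's lemma yields a maximal element $\{z_i^\delta\}_{i\in I_\delta}$. The open balls $B(z_i^\delta,\delta/2)$ are pairwise disjoint (any point in two such balls would force $\sfd(z_i^\delta,z_j^\delta)<\delta$), so separability of $(\supp\mm,\sfd)$ forces $I_\delta$ to be at most countable, and after enumeration I may identify it with $\{0,1,\ldots,\ell_\delta-1\}$ or $\mathbb N$. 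Maximality ensures that for every $x\in X$ there is some $i$ with $\sfd(x,z_i^\delta)\leq \delta$, for otherwise $\{z_i^\delta\}\cup\{x\}$ would be a strictly larger $\delta$-separated set.

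Next I would define
\[
A_i^\delta:=\bigl\{x\in X\;:\;\sfd(x,z_i^\delta)\leq \sfd(x,z_j^\delta)\text{ for every }j,\text{ and }\sfd(x,z_i^\delta)<\sfd(x,z_j^\delta)\text{ for every }j<i\bigr\}.
\]
These are Borel and pairwise disjoint by construction. Under the doubling hypothesis governing the section, the set $\{j:\sfd(x,z_j^\delta)\leq \delta\}$ is finite for each $x$, so the infimum $\inf_j\sfd(x,z_j^\delta)$ is attained and every $x$ lies in some $A_i^\delta$; hence $\{A_i^\delta\}$ is a partition of $X$, giving property (i) together with the separation of the $z_i^\delta$. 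For property (ii), if $x\in A_i^\delta$ and $k$ is an index with $\sfd(x,z_k^\delta)\leq \delta$ (provided by maximality), then $\sfd(x,z_i^\delta)\leq \sfd(x,z_k^\delta)\leq \delta$, so $A_i^\delta\subset \overline B(z_i^\delta,\delta)\subset B(z_i^\delta,5\delta/4)$. Conversely, for $x\in B(z_i^\delta,\delta/3)$ the triangle inequality gives, for every $j\neq i$,
\[
\sfd(x,z_j^\delta)\geq \sfd(z_i^\delta,z_j^\delta)-\sfd(x,z_i^\delta)>\delta-\frac{\delta}{3}=\frac{2\delta}{3}>\sfd(x,z_i^\delta),
\]
so $z_i^\delta$ is the strict nearest center to $x$ and hence $x\in A_i^\delta$.

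The only subtle point, and hence the main would-be obstacle in a more abstract setting, is ensuring a measurable nearest-neighbor assignment when infinitely many centers could accumulate at the same distance from some $x$. In the doubling setting of the paper this pathology does not arise, as noted above, and the lexicographic rule makes each $A_i^\delta$ manifestly Borel; so once the maximal separated set and the partition are in place, (i) and (ii) follow by the elementary triangle-inequality estimates above.
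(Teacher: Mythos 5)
Your proof is correct under the standing hypotheses of Section~7, but it takes a different route from the paper at exactly the point you flag as ``the only subtle point.'' The paper does not assume that the nearest center to $x$ exists: it selects the centers greedily from a countable dense set (rather than by Zorn's lemma, though that difference is immaterial) and then defines the cells by an $\ep$-\emph{relaxed} Voronoi rule, namely $x\in A_i^\delta$ iff $i$ is the smallest index with $\sfd(x,z_i^\delta)\le\sfd(x,z_j^\delta)+\ep$ for all $j$; the tolerance $\ep$ guarantees that the set of admissible indices is nonempty even when $\inf_j\sfd(x,z_j^\delta)$ is not attained, and the final choice $\ep=\delta/8$ still yields the inclusions $B(z_i^\delta,\delta/3)\subset A_i^\delta\subset B(z_i^\delta,5\delta/4)$. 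Your exact nearest-neighbor rule instead needs attainment of the infimum, which you correctly derive from the doubling hypothesis (finitely many $\delta$-separated centers within distance $\delta$ of any $x$); the paper itself remarks that with local compactness one may take $\ep=0$, which is essentially your argument. What the paper's extra generality buys is that the same lemma is reused verbatim in Appendix~B for Polish spaces that are \emph{not} assumed doubling; there your construction would genuinely fail (one can build a separable space in which $\inf_j\sfd(x,z_j^\delta)$ is not attained, so $x$ would lie in no cell), and the $\ep$-relaxation is the needed fix. So: correct as a proof of the lemma in Section~7, but if you want the statement in the generality in which the paper actually uses it, you should adopt the relaxed Voronoi rule.
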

\begin{proof} Let us fix once for all a countable dense set $\{x_k\}_{k \in \N}$. Then, starting from $z_0^\delta=x_0$, we proceed in this way:
\begin{itemize}
 \item for $i\geq 1$, set recursively
 $$ B_i = X \setminus \bigcup_{j <i} \overline{B} (z_j^{\delta}, \delta );$$
 \item if $B_i=\emptyset$ for some $i\geq 1$, then the procedure stops. Otherwise, take $z_i^{\delta} = x_{k_i}$ where
 $$ k_i= \min \{ k \in \N \; : \;  x_k \in B_i \}. $$
\end{itemize}
We claim that for every $\ep >0$ we have that
$$ \bigcup_{i= 0}^{\infty} B( z_i^{\delta} , \delta + \varepsilon) = X.$$
To show this it is sufficient to note that for every $x \in X$ we have a point $x_j$ such that $\sfd (x_j,x) < \ep$; then either $x_j=z_i^{\delta}$ 
for some $i$ or $x_j \in \overline{B}(z_i^{\delta}, \delta)$. In both cases we get
\begin{equation}\label{eqn:exist} 
\forall x \in X \; \exists i \in \N \qquad \text{such that $\sfd( z_i^{\delta} , x) < \delta + \ep$.}
\end{equation}
Now we define the sets $A_i^{\delta}$ similarly to a Voronoi diagram constructed from the starting point $z_i^{\delta}$: 
$$ A_0^{\delta} = \left\{ x \in X \; : \; \sfd ( x, z_0^{\delta} ) \leq \sfd ( x, z_j^{\delta} )+ \ep \quad \forall j > 0 \right\}, $$
$$ A_i^{\delta} = \Big\{ x \in X \setminus \Big( \bigcup_{j < i } A_j^{\delta} \Big) \; : \; \sfd ( x, z_i^{\delta} ) \leq \sfd ( x, z_j^{\delta} )+ \ep \quad \forall j > i \Big\}. $$ 
By construction all these sets are Borel and disjoint. We can also give a dual definition: $x \in A_{k}^{\delta}$ iff
$$ k= \min I_x\qquad\text{where}\qquad I_x=\left\{ i \in \N \; : \; \sfd ( x, z_i^{\delta} ) \leq \sfd ( x, z_j^{\delta} )+ \ep \quad \forall j \in \N \right\}. $$
In other words, we're minimizing the quantity $ \sfd ( x, z_i^{\delta})$ and among those indeces $i$ who are minimizing up to $\ep$ we take the least one $i_x$. This proves that $I_x$ is non empty and by this quasi minimality and \eqref{eqn:exist} we obtain $ \sfd ( x, z_{i_x}^{\delta} ) \leq \inf_{i\in \N} \sfd ( x, z_{i}^{\delta}) + \ep < \delta + 2 \ep$. Furthermore if $ \sfd ( x, z_i^{\delta} ) < \delta /2 - \ep /2 $ then $I_x = \{ i \}$. Indeed, suppose there is another $j \in I_x$ with $j \neq i$, then $\sfd ( z_j^{\delta} ,x ) \leq \sfd (z_i^{\delta},x ) + \ep \leq \delta+ \ep/2$ and so
$$ \delta < \sfd ( z_i^{\delta} , z_j^{\delta} ) \leq \sfd (z_i^{\delta}, x ) + \sfd (z_j^{\delta},x ) \leq \delta. $$
We just showed that
$$ B\left( z_i^{\delta} , \frac{\delta}2 - \frac{\ep}2 \right) \subset A_i^{\delta} \subset B(z_i^{\delta} , \delta + 2 \ep ). $$
The dual definition gives us that $A_i^\delta$ are a partition of $X$, and (ii) is satisfied choosing $\ep = \delta/8$.
\end{proof}

Note that this construction is quite simpler if $X$ is locally compact, which is always the case if
$(X,\sfd)$ is doubling and complete. In this case we can choose $\ep=0$.

We remark that partitions with additional properties have also been studied in the literature. For example, in \cite{Chr} \emph{dyadic} partitions of a 
doubling metric measure space are constructed.

\begin{definition} [Dyadic partition] A dyadic partition is made by a sequence $(\ell_h)\subset\N\cup\{\infty\}$ 
and by collections of disjoint sets (called cubes) $ \Delta^h= 
 \{ A^h_i \}_{1\leq i<\ell(h)\}}$ such that for every $h\in\N$ the following properties hold:
\begin{itemize}
 \item $ \mm\bigl(X\setminus\bigcup_i A^h_i\bigr)=0$;
 \item for every $i\in [1,\ell_{h+1})$ there exists a unique $j\in [1,\ell_h)$ such that $A^{h+1}_i \subset A^h_j$;
 \item for every $i\in [1,\ell_h)$ there exists $z^h_i\in X$ such that $B(z^h_i,a_0\delta^h) \subset A^h_i\subset B(z^h_i, a_1 \delta^h)$ for some 
positive constants $\delta,\,a_0,\,a_1$ independent of $i$ and $h$.
\end{itemize}
\end{definition}

In \cite{Chr} existence of dyadic decompositions is proved, with $\delta$, $a_1$ and $a_0$ depending on the constant $\tilde{c}_D$
in \eqref{eqn:doubll}.
Although some more properties of the partition might give additional information on the functionals that we are going to construct, 
for the sake of simplicity we just work with the partition given by Lemma~\ref{lemma:dec}.

In order to define our discrete gradients we give more terminology.
We say that $A^{\delta}_i$ is a \emph{neighbor} of $A^{\delta}_j$, and we denote by $A^{\delta}_i  \sim A^{\delta}_j$,  
if their distance is less than $\delta$. In particular $A^{\delta}_i  \sim A^{\delta}_j$ implies that $\sfd(z_i^{\delta},z_j^{\delta}) \leq 4 \delta$:
indeed, if $\tilde{z}^\delta_i\in A^\delta_i$ and $\tilde{z}^\delta_j\in A^\delta_j$ satisfy $\sfd(\tilde{z}_i^\delta,\tilde{z}_j^\delta)<\delta'$ we have
$$
\sfd(z^\delta_i,z^\delta_j)\leq\sfd(z^\delta_i,\tilde{z}_i^\delta)+\sfd(\tilde{z}_i^\delta,\tilde{z}_j^\delta)+\sfd(\tilde{z}_j^\delta,z_j^\delta)\leq
\frac{10}{4}\delta+\delta'
$$
and letting $\delta'\downarrow\delta$ we get $$\sfd(z_i^{\delta},z_j^{\delta}) \leq \frac{14}{4}\delta\leq 4 \delta.$$
This leads us to the first important property of doubling spaces:
\begin{equation}\label{rmk:finite-neighd}
\text{In a $c_D$-doubling metric space $(X,\sfd)$, every $A^\delta_i$ has at most $c_D^3$ neighbors.}
\end{equation}
Indeed, we can cover $B(z_i^{\delta},4\delta)$ with $c_D^3$ balls with radius $\delta/2$ but each 
of them, by the condition $\sfd(z_i^\delta,z_j^\delta)>\delta$, can contain only one of the $z_j^{\delta}$'s.

Now we fix $\delta \in (0,1)$ and we consider a partition $A_i^{\delta}$ of $\supp\mm$ on scale $\delta$.
For every  $u \in L^q(X, \mm)$ we define the average $u_{\delta,i}$ of $u$ in each cell of the partition by $ \fint_{A^\delta_i} u\,\d\mm$. 
We denote by $\PCh{\delta}(X)$, which depends on the chosen decomposition as well, the set of functions $u \in L^q(X, \mm)$ constant on each 
cell of the partition at scale $\delta$, namely
$$u(x) = u_{\delta,i} \qquad \mbox{for $\mm$-a.e.\ } x\in {A^\delta_i}.$$
We define a linear projection functional $\Ph{\delta}:  L^q(X, \mm) \to  \PCh{\delta}(X)$ by
 $\Ph{\delta} u (x) = u_{\delta,i}$ for every $x \in A^\delta_i$.

The proof of the following lemma is elementary.

\begin{lemma} \label{lem:contractivity}
$\Ph{\delta}$ are contractions in $L^q(X,\mm)$ and $\Ph{\delta}u\to u$ in $L^q(X,\mm)$ as $\delta\downarrow 0$ for
all $u\in L^q(X,\mm)$. 
\end{lemma}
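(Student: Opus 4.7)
The plan is to establish the two statements in sequence. For the contraction property, I would apply Jensen's inequality to the convex function $t\mapsto |t|^q$ on each cell: for every $i$,
$$
|u_{\delta,i}|^q = \Bigl|\fint_{A^\delta_i} u\,\d\mm\Bigr|^q \leq \fint_{A^\delta_i}|u|^q\,\d\mm.
$$
Multiplying by $\mm(A^\delta_i)$ and summing, using that $\{A^\delta_i\}$ partitions $X$ (modulo an $\mm$-null set, since $\supp\mm$ carries all the $\mm$-mass) yields $\|\Ph{\delta}u\|_q^q \leq \|u\|_q^q$. This is just the conditional-expectation contraction in disguise.

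For the convergence, the standard route is a three-epsilon argument based on density of Lipschitz functions with bounded support in $L^q(X,\mm)$, which is available since $(X,\sfd)$ is separable and $\mm$ is finite on bounded sets. So, given $\varepsilon>0$, I would pick $v\in\Lip_c(X)$ with $\|u-v\|_q<\varepsilon$ and split
$$
\|u-\Ph{\delta}u\|_q \leq \|u-v\|_q + \|v-\Ph{\delta}v\|_q + \|\Ph{\delta}(v-u)\|_q \leq 2\varepsilon + \|v-\Ph{\delta}v\|_q,
$$
using the contraction property on the third term. It then remains to show $\Ph{\delta}v\to v$ in $L^q$ for $v\in\Lip_c(X)$.

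For this reduction, I would use the geometric bound $A^\delta_i\subset B(z^\delta_i,\tfrac{5}{4}\delta)$ from Lemma~\ref{lemma:dec}(ii), which gives $\mathop{\rm diam}(A^\delta_i)\leq \tfrac{5}{2}\delta$. Thus, writing $L:=\Lip(v)$, for $\mm$-a.e. $x\in A^\delta_i$,
$$
|v(x)-\Ph{\delta}v(x)| = \Bigl|\fint_{A^\delta_i}(v(x)-v(y))\,\d\mm(y)\Bigr| \leq \tfrac{5}{2}L\delta.
$$
Moreover $\Ph{\delta}v$ is supported in the union of those cells meeting $\supp v$, which is contained in a $\tfrac{5}{2}\delta$-neighbourhood $N_\delta$ of $\supp v$; in particular $\mm(\{v\neq 0\}\cup\{\Ph{\delta}v\neq 0\})\leq \mm(N_1)<\infty$ for all $\delta\leq 1$. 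Hence
$$
\|v-\Ph{\delta}v\|_q^q \leq (\tfrac{5}{2}L\delta)^q\,\mm(N_1)\to 0
$$
as $\delta\downarrow 0$, which closes the argument.

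The only real point of care is the choice of density class and the finiteness of the mass of the enlarged support; no step is especially delicate, and no doubling assumption is actually required for this lemma.
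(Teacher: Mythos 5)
Your proof is correct and follows essentially the same route as the paper's: contractivity via Jensen's inequality on each cell, and convergence via reduction to a dense class of functions with bounded support combined with the uniform diameter bound ${\rm diam}(A_i^\delta)\le\tfrac52\delta$ from Lemma~\ref{lemma:dec}(ii). The only (minor) divergence is that the paper takes bounded \emph{continuous} functions with bounded support and invokes doubling (compactness of bounded closed sets, hence uniform continuity) before applying dominated convergence, whereas your choice of Lipschitz test functions gives the quantitative estimate directly and, as you correctly observe, makes the doubling hypothesis unnecessary for this particular lemma.
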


Indeed, the contractivity of $\Ph{\delta}$ is a simple consequence of Jensen's inequality and it suffices to check the convergence
of $\Ph{\delta}$ as $\delta\downarrow 0$ on a dense subset of $L^q(X,\mm)$. Since $\mm$ is finite on bounded sets, suffices to
consider bounded continuous functions with bounded support. Since bounded closed sets are compact,
by the doubling property, it follows that any such function $u$ is uniformly continuous, so that $\Ph{\delta}u\to u$ pointwise
as $\delta\downarrow 0$. Then, we can use the dominated convergence theorem to conclude.

We now define an  approximate gradient as follows: it
 is constant on the cell $A_i^\delta$ for every $\delta,\,i\in \N$ and it takes the value
$$ |\Dh{\delta} u |^q (x) :=\frac 1{\delta^q} \sum_{A^\delta_j \sim A^\delta_i} | u_{\delta,i} - u_{\delta,j}|^q \qquad \forall x \in A^\delta_i.$$
We can accordingly define the functional $\Fh{\delta,q}: L^q(X, \mm)\to [0,\infty]$ by
 \begin{equation}\label{defn:fh}
  \Fh{\delta,q} (u) := \int_X | \Dh{\delta} u | ^q (x) \,\d \mm (x).
 \end{equation}
 
 Now, using the weak gradients, we define a functional $\Ch:L^q(X,\mm)\to [0,\infty]$ that
 we call \emph{Cheeger} energy, formally similar to the
 one \eqref{def:Cheeger} used in Section~\ref{sec:Chq}, for the purposes of energy dissipation estimates and 
 equivalence of weak gradients. Namely, we set
 $$
 \Ch_q(u):=
 \begin{cases}
 \int_X|\nabla u|_{w,q}^q\,\d\mm &\text{if $u$ has a $q$-relaxed slope}\cr
 +\infty &\text{otherwise.}
 \end{cases}
 $$

At this level of generality, we cannot expect that the functionals $\Fh{\delta,q}$
$\Gamma$-converge as $\delta\downarrow 0$. However, since $L^q(X, \mm)$ is a complete 
and separable metric space, from the compactness property of $\Gamma$-convergence stated in Proposition~\ref{prop:Gammacompact}
we obtain that the 
functionals $\Fh{\delta,q}$ have $\Gamma$-limit points as $\delta\downarrow 0$.
 
\begin{theorem}\label{thm:main}
Let $(X,\sfd,\mm)$ be a metric measure space with $(\supp\mm,\sfd)$ complete and doubling, $\mm$ finite on bounded sets.
Let $\F$ be a $\Gamma$-limit point of $\Fh{\delta,q}$ as $\delta\downarrow 0$, namely
$$ \F:= \Gamma \!- \! \lim_{k\to \infty} \mathcal{F}_{\delta_k,q}, $$
for some infinitesimal sequence $(\delta_k)$, where the $\Gamma$-limit is computed with respect to the $L^q(X,\mm)$ distance.
Then:
\begin{enumerate}
\item[(a)] $\F$ is equivalent to the Cheeger energy $\Ch_q$, namely there exists $\eta=\eta(q,c_D)$ such that
\begin{equation}\label{ts:est-f-ch}
\frac{1}{\eta} \, \Ch_q (u) \leq \F (u) \leq \eta \, \Ch_q (u)\qquad\forall u\in L^q(X,\mm).
\end{equation}
\item[(b)] The norm on $W^{1,q}(X,\sfd,\mm)$ defined by
\begin{equation}\label{eqn:equiv-metr}
\left( \|u\|^q_q+\F(u) \right)^{1/q} \qquad \forall u\in W^{1,q}(X,\sfd,\mm)
\end{equation}
is uniformly convex.
Moreover, the seminorm $\Ftwo^{1/2}$ is Hilbertian, namely 
\begin{equation}\label{eqn:quadr}
\Ftwo(u+v) + \Ftwo(u-v) = 2 \big(\Ftwo(u)+\Ftwo(v) \big) \qquad \forall u,\,v \in W^{1,2}(X,\sfd,\mm).
\end{equation}
\end{enumerate}
\end{theorem}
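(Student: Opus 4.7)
My plan splits part (a) into two one-sided bounds ($\F\sim\Ch_q$) and deduces part (b) by transferring, via Lemma~\ref{lemma:unif-conv}, the uniform convexity of the discrete $\ell^q$-type seminorms to their $\Gamma$-limit. Throughout I freely use that $\F$, being a $\Gamma$-limit in $L^q(X,\mm)$, is automatically $L^q$-lower semicontinuous.

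\textbf{Upper bound in (a).} For a Lipschitz $u$ with bounded support I take the recovery candidate $u_\delta:=\Ph{\delta}u$. By Lemma~\ref{lemma:dec}, if $A_i^\delta\sim A_j^\delta$ and $x\in A_i^\delta$, $y\in A_j^\delta$, then $\sfd(x,y)\leq\mathrm{diam}(A_i^\delta)+\delta+\mathrm{diam}(A_j^\delta)\leq 6\delta$, so
\[
|u_{\delta,i}-u_{\delta,j}|\leq 6\delta\,\Lip\bigl(u,B(z_i^\delta,5\delta)\bigr).
\]
Combined with the $c_D^3$-neighbor bound \eqref{rmk:finite-neighd}, this gives $|\Dh{\delta}u_\delta|^q(x)\leq 6^q c_D^3\,\Lip\bigl(u,B(x,7\delta)\bigr)^q$. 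Integrating and letting $\delta\downarrow 0$ (dominated convergence, using that $\Lip(u,B(x,7\delta))\to\Lip_a(u,x)$), the $\Gamma$-liminf property then yields $\F(u)\leq 6^q c_D^3\int_X\Lip_a(u,\cdot)^q\,\d\mm$. Proposition~\ref{prop:easy} together with $L^q$-lower semicontinuity of $\F$ extends this to every $u\in W^{1,q}(X,\sfd,\mm)$.

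\textbf{Lower bound in (a).} Given $u$ with $\F(u)<\infty$, I pick a recovery sequence $u_{\delta_k}\in\PCh{\delta_k}$ (obtained by composing any recovery sequence with $\Ph{\delta_k}$) such that $\Fh{\delta_k,q}(u_{\delta_k})\to\F(u)$ and $u_{\delta_k}\to u$ in $L^q$. I construct a Lipschitz partition of unity $\{\phi_i^\delta\}$ subordinate to the enlarged cells $\{x:\sfd(x,A_i^\delta)<\delta\}$, with $\Lip(\phi_i^\delta)\leq C/\delta$ and $\sum_i\phi_i^\delta\equiv 1$, and define the smoothing $\tilde u_\delta:=\sum_i u_{\delta,i}\phi_i^\delta$. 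For $x\in A_j^\delta$, writing $\tilde u_\delta-u_{\delta,j}=\sum_{i:\,A_i^\delta\sim A_j^\delta}(u_{\delta,i}-u_{\delta,j})\phi_i^\delta$ and combining \eqref{eq:leibn-relgradq} with Hölder (the number of nonzero terms being controlled by $c_D^3$) yields
\[
\Lip_a(\tilde u_\delta,x)^q\leq C(q,c_D)\,|\Dh{\delta}u_\delta|^q(x).
\]
A similar Hölder estimate shows $\|\tilde u_{\delta_k}-u_{\delta_k}\|_q^q\leq C\delta_k^q\,\Fh{\delta_k,q}(u_{\delta_k})\to 0$, so $\tilde u_{\delta_k}\to u$ in $L^q$. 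Since $\{\Lip_a(\tilde u_{\delta_k},\cdot)\}$ is bounded in $L^q$, extracting a weakly convergent subsequence and appealing to Definition~\ref{def:genuppergrad} exhibits its weak limit as a $q$-relaxed slope of $u$; this gives $\int_X|\nabla u|_{*,q}^q\,\d\mm\leq C(q,c_D)\,\F(u)$, which by Theorem~\ref{thm:graduguali} is the desired lower bound on $\Ch_q$. A standard cutoff argument handles $u$ without bounded support.

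\textbf{Part (b).} On $\PCh{\delta}(X)$ the functional
\[
N_\delta^q(u):=\|u\|_q^q+\Fh{\delta,q}(u)
\]
is, after weighting by cell masses, the $q$-th power of the $\ell^q$-norm on $\R^{I_\delta}\oplus\R^{E_\delta}$, where $I_\delta$ indexes the cells and $E_\delta$ the neighbor pairs; hence $N_\delta$ is uniformly convex with a modulus $\omega_q$ depending only on $q$ by Clarkson. Extending via $\tilde N_\delta(u):=N_\delta(\Ph{\delta}u)$ preserves $1$-homogeneity and uniform convexity thanks to the linearity of $\Ph{\delta}$; since $\Fh{\delta,q}(\Ph{\delta}u)=\Fh{\delta,q}(u)$ and $\Ph{\delta}u\to u$ in $L^q$ by Lemma~\ref{lem:contractivity}, the functionals $\tilde N_{\delta_k}^q$ $\Gamma$-converge in $L^q$ to $\|\cdot\|_q^q+\F$, so $\tilde N_{\delta_k}$ $\Gamma$-converge to $(\|\cdot\|_q^q+\F)^{1/q}$. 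Lemma~\ref{lemma:unif-conv} then transfers uniform convexity to the $\Gamma$-limit, which by (a) is a norm equivalent to the standard $W^{1,q}$-norm. For $q=2$, $\Fh{\delta,2}$ is a quadratic form, so the parallelogram identity holds at the discrete level; applying the $\Gamma$-$\liminf$ inequality to recovery sequences for $u,v$ and separately for $u\pm v$ yields both inequalities in \eqref{eqn:quadr}. The main obstacle is the lower-bound step in (a): constructing the partition of unity with controlled Lipschitz constants in an arbitrary doubling metric space and verifying the slope bound $\Lip_a(\tilde u_\delta)\leq C(q,c_D)|\Dh{\delta}u_\delta|$, where the doubling condition enters crucially via the uniform bound on the number of neighboring cells.
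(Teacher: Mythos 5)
Your upper bound in (a) and your part (b) are essentially the paper's argument: the same $6^qc_D^3\,\Lip(u,B(x,6\delta))^q$ estimate on the discrete gradient (using the constant/projected sequence in the $\Gamma$-liminf inequality, then Proposition~\ref{prop:easy} and lower semicontinuity of the $\Gamma$-limit), and the same route to uniform convexity via the linear isometry onto a discrete $L^q$ space, Clarkson's inequalities and Lemma~\ref{lemma:unif-conv}, with the $q=2$ identity obtained by applying the one-sided parallelogram inequality to $u=(u'+v')/2$, $v=(u'-v')/2$.

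The lower bound is where you genuinely depart from the paper, and where there is a gap. The paper never builds Lipschitz interpolants: it proves (Lemma~\ref{lem:ugdiscrete}) that $4|\Dh{\delta}u|$ is a $q$-weak upper gradient of $\Ph{\delta}u$ \emph{up to scale} $\delta/2$, and then invokes the stability Theorem~\ref{thm:stabweak} with scales $\varepsilon_k=\delta_k/2\to 0$ applied directly to the $L^q$ functions $\Ph{\delta_k}u_k$; this yields $4g\geq|\nabla u|_{w,q}$ for the weak limit $g$ and the constant $4^{-q}$. Your partition-of-unity smoothing $\tilde u_\delta=\sum_i u_{\delta,i}\phi_i^\delta$ is a viable alternative (the estimates $\Lip_a(\tilde u_\delta,x)\leq C(q,c_D)|\Dh{\delta}u_\delta|(x)$ and $\|\tilde u_\delta-\Ph{\delta}u\|_q\leq C\delta\,\Fh{\delta,q}(u)^{1/q}$ do go through, provided you subordinate the partition to $\{\sfd(\cdot,A_i^\delta)<\delta/2\}$ so that only genuine neighbors of the cell containing $x$ contribute near $x$), \emph{but} you cannot appeal to Definition~\ref{def:genuppergrad} as stated: that definition requires the approximating functions to be globally Lipschitz with bounded support, whereas $\tilde u_\delta$ is only locally Lipschitz (its local Lipschitz constant is controlled by $|\Dh{\delta}u_\delta|$, which is merely in $L^q$) and need not have bounded support, since the recovery sequence $u_{\delta_k}$ is an arbitrary $L^q$ sequence. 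Truncating $u_{\delta_k}$ does not obviously repair this, because truncation can \emph{increase} the cell averages' differences and hence $\Fh{\delta,q}$. The clean fix is to reroute through the same tool the paper uses: $\Lip_a(\tilde u_\delta,\cdot)$ is an upper gradient of the locally Lipschitz function $\tilde u_\delta$, hence a $q$-weak upper gradient, so Theorem~\ref{thm:stabweak} (with $\varepsilon_n=0$) applied to $\tilde u_{\delta_k}\to u$ gives that the weak $L^q$ limit dominates $|\nabla u|_{w,q}/C$, and Theorem~\ref{thm:graduguali} converts this into the bound on $\Ch_q$. With that substitution your lower bound is correct, at the price of a worse constant than the paper's $4^q$ and of the extra work of constructing the partition of unity.
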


\begin{corollary}[Reflexivity of $W^{1,q}(X,\sfd,\mm)$]\label{cor:refl}
Let $(X,\sfd,\mm)$ be a metric measure space with $(\supp\mm,\sfd)$ doubling and $\mm$ finite on bounded sets.
The Sobolev space $W^{1,q}(X,\sfd,\mm)$ of functions $u\in L^q(X,\mm)$ with a $q$-relaxed slope, 
endowed with the usual norm
\begin{equation}\label{eqn:Sob-metr}
\left( \|u\|_q^q+\Ch_q(u) \right)^{1/q} \qquad \forall u\in W^{1,q}(X,\sfd,\mm),
\end{equation}
is reflexive. \end{corollary}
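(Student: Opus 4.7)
The plan is to deduce the corollary directly from Theorem~\ref{thm:main}: that theorem endows $W^{1,q}(X,\sfd,\mm)$ with an equivalent uniformly convex norm, and reflexivity then follows from Milman--Pettis together with the fact that reflexivity is invariant under equivalent norms.

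First I would invoke Proposition~\ref{prop:Gammacompact} in the separable space $L^q(X,\mm)$ to extract a $\Gamma$-limit point $\F$ of the family $\Fh{\delta,q}$ along some infinitesimal sequence $\delta_k\downarrow 0$. By part~(a) of Theorem~\ref{thm:main}, the two-sided estimate \eqref{ts:est-f-ch} shows that the finiteness domain of $\F$ coincides with $W^{1,q}(X,\sfd,\mm)$ and that
$$
N(u):=\bigl(\|u\|_q^q+\F(u)\bigr)^{1/q}
$$
defines on $W^{1,q}(X,\sfd,\mm)$ a norm equivalent to the standard one in \eqref{eqn:Sob-metr}, with constants depending only on $q$ and $c_D$.

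Next I would check that $W^{1,q}(X,\sfd,\mm)$ is complete with respect to $N$. If $(u_n)$ is $N$-Cauchy, then it converges to some $u$ in $L^q(X,\mm)$ and $\F(u_n-u_m)\to 0$ as $n,m\to\infty$. Since any $\Gamma$-limit in a metric space is lower semicontinuous, for each fixed $m$ we have
$$
\F(u-u_m)\leq\liminf_{n\to\infty}\F(u_n-u_m),
$$
so $\F(u-u_m)\to 0$ as $m\to\infty$. Combined with \eqref{ts:est-f-ch} this forces $u-u_m\in W^{1,q}(X,\sfd,\mm)$ for large $m$; by the subadditivity \eqref{eq:subadd-relgradq} of the asymptotic Lipschitz constant (and hence subadditivity of the relaxed slope), $W^{1,q}(X,\sfd,\mm)$ is a vector space, so $u=u_m+(u-u_m)\in W^{1,q}(X,\sfd,\mm)$ and $u_m\to u$ in $N$.

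Finally, Theorem~\ref{thm:main}(b) says that $N$ is uniformly convex, so $(W^{1,q}(X,\sfd,\mm),N)$ is a uniformly convex Banach space, hence reflexive by the Milman--Pettis theorem; equivalence of $N$ with the norm in \eqref{eqn:Sob-metr} transports reflexivity to the standard Sobolev norm. I do not anticipate any real obstacle: the only residual technicality is the completeness check, which is routine given that $\Gamma$-limits are automatically lower semicontinuous, since all the analytic content is already absorbed into Theorem~\ref{thm:main}.
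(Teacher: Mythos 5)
Your proposal is correct and follows essentially the same route as the paper: pass to the equivalent norm $\bigl(\|u\|_q^q+\F(u)\bigr)^{1/q}$ provided by Theorem~\ref{thm:main}, use its uniform convexity from part (b), and conclude by Milman--Pettis together with the invariance of reflexivity under equivalent norms. The only addition is your explicit completeness check via lower semicontinuity of the $\Gamma$-limit, which the paper leaves implicit; it is a correct and harmless supplement.
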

 \begin{proof}  Since the Banach norms \eqref{eqn:equiv-metr} and  \eqref{eqn:Sob-metr} on $W^{1,q}(X,\sfd,\mm)$ are equivalent thanks to \eqref{ts:est-f-ch}
 and reflexivity is invariant, we can work with the first norm. The Banach space $W^{1,q}(X,\sfd,\mm)$ endowed with the first norm is reflexive 
 by uniform convexity and Milman-Pettis theorem. 
 \end{proof}
 
We can also prove, by standard functional-analytic arguments, that reflexivity implies separability.

\begin{proposition}[Separability of $W^{1,q}(X,\sfd,\mm)$]
 If $W^{1,q}(X,\sfd,\mm)$ is reflexive, then it is separable and bounded Lipschitz functions with bounded support are dense.
 \end{proposition}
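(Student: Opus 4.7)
The plan is to establish the density of bounded Lipschitz functions with bounded support first, and then deduce separability from it.

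\textbf{Density.} Given $u \in W^{1,q}(X,\sfd,\mm)$, Proposition~\ref{prop:easy} furnishes Lipschitz functions $f_n$ with bounded support such that $f_n \to u$ in $L^q(X,\mm)$ and ${\rm Lip}_a(f_n,\cdot) \to |\nabla u|_{*,q}$ in $L^q(X,\mm)$. Since $|\nabla f_n|_{*,q} \leq {\rm Lip}_a(f_n,\cdot)$ by \eqref{eq:facile}, $(f_n)$ is bounded in $W^{1,q}$. Reflexivity yields a weakly convergent subsequence whose weak limit must coincide with $u$ (by strong $L^q$-convergence), and Mazur's lemma then produces finite convex combinations of $(f_n)$ converging to $u$ in the $W^{1,q}$-norm. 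These combinations are again Lipschitz with bounded support (and hence also bounded, since a Lipschitz function supported in a bounded set is bounded), which proves the density statement.

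\textbf{A countable family in $W^{1,q}$.} Fix $x_0 \in \supp\mm$; since $(\supp\mm,\sfd)$ is doubling and complete (as a closed subset of the complete space $(X,\sfd)$), the sets $K_n := \overline{B}(x_0,n) \cap \supp\mm$ are compact, and so $C(K_n)$ is separable under the sup norm. For each $n \in \N$ define
$$
\mathcal{D}_n := \left\{f \in \Lip(X) : \supp f \subset \overline{B}(x_0,n),\ \Lip(f) \leq n,\ \|f\|_\infty \leq n\right\};
$$
the restrictions to $K_n$ form a subset of $C(K_n)$ and therefore admit a countable sup-norm dense family. Choose representatives $\{f_{n,k}\}_{k \in \N} \subset \mathcal{D}_n$ of such a family and let $\mathcal{A}$ be the countable collection of all finite rational convex combinations of the $f_{n,k}$.

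\textbf{Conclusion.} To show $\mathcal{A}$ is $W^{1,q}$-dense, by the density just proved it suffices to approximate an arbitrary $g \in \bigcup_n \mathcal{D}_n$ in the $W^{1,q}$-norm by elements of $\mathcal{A}$. If $g \in \mathcal{D}_N$, pick $(f_{N,k_j})_j$ with $f_{N,k_j} \to g$ uniformly on $K_N$; since all the supports lie in $\overline{B}(x_0,N)$ and $\mm(\overline{B}(x_0,N))<\infty$, sup-norm convergence upgrades to $L^q$-convergence. The uniform Lipschitz bound $\Lip(f_{N,k_j}) \leq N$ keeps $(f_{N,k_j})$ bounded in $W^{1,q}$; reflexivity promotes the $L^q$-convergence to weak $W^{1,q}$-convergence to $g$, and Mazur's lemma then yields rational convex combinations in $\mathcal{A}$ converging strongly to $g$ in $W^{1,q}$. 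A diagonal argument finishes the proof.

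The principal obstacle is the passage from sup-norm approximation of a Lipschitz function with bounded support to a genuine $W^{1,q}$-norm approximation by elements of a fixed countable family; this relies essentially on reflexivity combined with Mazur's lemma to convert weak convergence into strong convergence of convex combinations.
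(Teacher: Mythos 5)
Your proof is correct. The density step is essentially the paper's own argument: Proposition~\ref{prop:easy} produces a sequence that is bounded in $W^{1,q}(X,\sfd,\mm)$ and converges in $L^q(X,\mm)$, reflexivity upgrades this to weak convergence in $W^{1,q}$, and convexity (Mazur's lemma) turns weak density of the convex set of Lipschitz functions with bounded support into strong density. For separability, however, you construct the countable family by a genuinely different route. The paper takes, for each $M$, a countable $L^q(X,\mm)$-dense subset of the sublevel set $\{f\in\Lip(X)\cap L^q(X,\mm):\ \int_X|\nabla f|^q\,\d\mm\le M\}$, stable under rational convex combinations; this uses only the separability of $L^q(X,\mm)$ and no geometry of $(X,\sfd)$. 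You instead exploit compactness of the sets $K_n=\overline B(x_0,n)\cap\supp\mm$ (hence completeness and metric doubling of $\supp\mm$, which are standing assumptions of Section~7 but are not needed for the paper's version of this step) and the separability of $C(K_n)$, building a uniform Lipschitz bound into the classes $\mathcal D_n$; this bound controls the Sobolev norm through $|\nabla f|_{*,q}\le{\rm Lip}_a(f,\cdot)\le\Lip(f)$, i.e.\ \eqref{eq:facile} combined with \eqref{eq:asymlip}, which neatly sidesteps any comparison between the slope and the relaxed slope. Both constructions then feed into the identical reflexivity-plus-Mazur mechanism. The one point you should make explicit is that Mazur's lemma yields real convex coefficients, which must be perturbed to rational ones to land in $\mathcal A$; this is harmless because the functions being combined are uniformly bounded in $W^{1,q}$, so a small change of coefficients changes the norm by little.
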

 \begin{proof}
 The density of Lipschitz functions with bounded support follows at once from the density
 of this convex set in the weak topology, ensured by Proposition~\ref{prop:easy}. In order to prove separability, suffices to consider 
 for any $M$ a countable and $L^q(X,\mm)$-dense subset ${\cal D}_M$ of 
 $${\cal L}_M:=\left\{f\in\Lip(X)\cap L^q(X,\mm):\ \int_X|\nabla f|^q\,\d\mm\leq M\right\},$$ 
 stable under convex combinations with rational coefficients. The weak closure of ${\cal D}_M$ obviously contains ${\cal L}_M$,
 by reflexivity (because if $f_n\in{\cal D}_M$ converge to $f\in{\cal L}_M$ in $L^q(X,\mm)$, then $f_n\to f$ weakly in $W^{1,q}(X,\sfd,\mm)$); 
 being this closure convex, it coincides with the strong closure of ${\cal D}_M$.
 This way we obtain that the closure in the strong topology of $\cup_M{\cal D}_M$ contains all Lipschitz functions with bounded support. 
\end{proof}

The strategy of the proof of statement (a) in Theorem~\ref{thm:main} consists in proving the estimate from above of $\F$ with
relaxed gradients and the estimate from below with weak gradients. Then, the equivalence between weak and relaxed gradients
provides the result. In the estimate from below it will be useful the discrete version of the $q$-weak upper gradient property given in
Definition~\ref{dfn:wug-discr}.

In the following lemma we prove that for every $u \in L^q(X,\mm)$ we have that $4|\Dh{\delta} u | $ is a $q$-weak 
upper gradient for $\Ph{\delta} u$ up to scale $\delta/2$.

\begin{lemma} \label{lem:ugdiscrete} Let $\gamma \in AC^p([0,1];X)$. Then we have that
\begin{equation}\label{eqn:grad-discr-a-b}
|\Ph{\delta} u ( \gamma_b) - \Ph{\delta} u ( \gamma_a)  | \leq 4
\int_a^b  | \Dh{\delta} u| ( \gamma_t)  | \dot {\gamma}_t | \, \d t 
\qquad\text{for all $a<b$  s.t. $\int_a^b |\dot{\gamma}_t|\,\d t > {\delta}/2$.}
\end{equation}
In particular $4|\Dh{\delta} u|$ is a $q$-weak upper gradient of $\Ph{\delta}u$ up to scale $\delta/2$.
\end{lemma}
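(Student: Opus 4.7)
The strategy is to partition $[a,b]$ into finitely many consecutive subintervals on each of which the endpoints of $\gamma$ lie in cells that are either equal or neighbors in the sense defined just before \eqref{rmk:finite-neighd}, and then to convert the resulting cell-level bound for $\Ph{\delta} u$ into a genuine line integral of $|\Dh{\delta} u|$. The delicate calibration is that each subinterval must have arclength strictly below $\delta$ (so that all intermediate cells visited are neighbors of both endpoint cells), but at least $\delta/2$ (so that the resulting constant is no larger than $4$). Setting $L:=\int_a^b|\dot\gamma_t|\,\d t>\delta/2$, the intermediate value theorem applied to $t\mapsto\int_a^t|\dot\gamma_r|\,\d r$ lets us split $[a,b]$ as $a=t_0<t_1<\cdots<t_n=b$ with $\int_{t_k}^{t_{k+1}}|\dot\gamma_r|\,\d r=\ell$, where $\ell:=L/n\in[\delta/2,\delta)$ is secured by choosing, for instance, $n=\lfloor L/\delta\rfloor+1$ (so that $n>L/\delta$, while $L>\delta/2$ gives $n\le 2L/\delta$).

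The second step is the cell analysis on a single piece $[t_k,t_{k+1}]$. Denote by $A^\delta_{j_k},A^\delta_{j(t)},A^\delta_{j_{k+1}}$ the cells containing $\gamma_{t_k},\gamma_t,\gamma_{t_{k+1}}$ for $t\in[t_k,t_{k+1}]$. Because $\sfd(\gamma_t,\gamma_{t_k})\le\ell<\delta$ and $\sfd(\gamma_t,\gamma_{t_{k+1}})\le\ell<\delta$, the cell $A^\delta_{j(t)}$ is either equal to, or a neighbor of, both $A^\delta_{j_k}$ and $A^\delta_{j_{k+1}}$. The defining formula for $|\Dh{\delta} u|$ then implies $|u_{\delta,j(t)}-u_{\delta,l}|\le\delta\,|\Dh{\delta} u|(\gamma_t)$ for every neighbor $A^\delta_l\sim A^\delta_{j(t)}$ (and trivially for $l=j(t)$), so the triangle inequality
$$|u_{\delta,j_k}-u_{\delta,j_{k+1}}|\le|u_{\delta,j_k}-u_{\delta,j(t)}|+|u_{\delta,j(t)}-u_{\delta,j_{k+1}}|$$
yields the pointwise bound $|u_{\delta,j_k}-u_{\delta,j_{k+1}}|\le 2\delta\,|\Dh{\delta} u|(\gamma_t)$ valid for every $t\in[t_k,t_{k+1}]$.

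The remaining steps are now routine. Multiplying by $|\dot\gamma_t|$ and integrating in $t$ over $[t_k,t_{k+1}]$ gives $\ell\,|u_{\delta,j_k}-u_{\delta,j_{k+1}}|\le 2\delta\int_{t_k}^{t_{k+1}}|\Dh{\delta} u|(\gamma_t)|\dot\gamma_t|\,\d t$; dividing by $\ell\ge\delta/2$ replaces the prefactor $2\delta/\ell$ by at most $4$, and telescoping $|\Ph{\delta} u(\gamma_b)-\Ph{\delta} u(\gamma_a)|\le\sum_{k}|u_{\delta,j_{k+1}}-u_{\delta,j_k}|$ produces \eqref{eqn:grad-discr-a-b}. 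For the ``in particular'' assertion, note that whenever $|\Dh{\delta} u|\in L^q(X,\mm)$ (which covers the case $\Fh{\delta,q}(u)<\infty$ relevant to subsequent arguments), the test-plan bound \eqref{eq:21} ensures $\int_\gamma|\Dh{\delta} u|<\infty$ for $q$-a.e.\ curve, so combined with the proven pointwise inequality this is precisely the statement that $4|\Dh{\delta} u|$ is a $q$-weak upper gradient of $\Ph{\delta} u$ up to scale $\delta/2$.

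The main obstacle is tuning the constant to be exactly $4$: one cannot directly compare $|\Dh{\delta} u|(\gamma_{t_k})$ with the $t$-integral over $[t_k,t_{k+1}]$, because the curve may exit the cell $A^\delta_{j_k}$ immediately after $t_k$ and the value $|\Dh{\delta} u|(\gamma_{t_k})$ is not visible to the integrand elsewhere. Thus the comparison must be routed through an arbitrary intermediate cell $A^\delta_{j(t)}$, picking up a triangle-inequality factor $2$; this in turn forces the arclength window $\ell\in[\delta/2,\delta)$, which is only available thanks to the lower bound $L>\delta/2$ built into the ``up to scale $\delta/2$'' hypothesis.
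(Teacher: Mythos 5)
Your proof is correct and follows essentially the same route as the paper's: slice $[a,b]$ into subintervals of arclength between $\delta/2$ and $\delta$, observe that for each intermediate time $t$ the cell of $\gamma_t$ neighbors (or equals) the two endpoint cells, deduce the pointwise bound on $|u_{\delta,j_k}-u_{\delta,j_{k+1}}|$, integrate, and telescope. The only cosmetic difference is that you bound the two cell differences separately by $\delta\,|\Dh{\delta}u|(\gamma_t)$ (giving the factor $2\delta$), while the paper combines them via $x^q+y^q\ge 2^{1-q}(x+y)^q$ to get the slightly sharper $2^{1-1/q}\delta$; both yield the constant $4$ after dividing by $\ell\ge\delta/2$.
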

\begin{proof}
It is enough to prove the inequality under the more restrictive assumption that
\begin{equation}\label{eqn:cond-a-b}
\frac{ \delta}2  \leq \int_a^b |\dot{\gamma}_t| \,\d t  \leq  \delta,
\end{equation}
because then we can slice every interval $(a,b)$ that is longer than $\delta/2$ into subintervals that satisfy \eqref{eqn:cond-a-b}, and 
we get \eqref{eqn:grad-discr} by adding the inequalities for subintervals and using triangular inequality.

Now we prove \eqref{eqn:grad-discr} for every $a,\,b \in [0,1]$ such that \eqref{eqn:cond-a-b} holds. Take any time $t\in [a,b]$; by assumption, it is 
clear that $\sfd(\gamma_t,\gamma_a)\leq\delta$ and $\sfd(\gamma_t,\gamma_b)\leq\delta$, so that the cells relative to 
$\gamma_a$ and $\gamma_b$ are both neighbors of the one relative to $\gamma_t$. By definition then we have:
$$
|\Dh{\delta} u|^q ( \gamma_t) \geq \frac 1{ \delta^q}\left( | \Ph{\delta} u(\gamma_b) - \Ph{\delta} u(\gamma_t)|^q  
+ | \Ph{\delta} u(\gamma_t) - \Ph{\delta} u(\gamma_a)|^q  \right) 
\geq \frac 1{2^{q-1}\delta^q} | \Ph{\delta} u ( \gamma_b) - \Ph{\delta} u (\gamma_b) |^q.
$$
Taking the $q$-th root and integrating in $t$ we get
$$\int_a^b | \Dh{\delta} u| (\gamma_t)| \dot{\gamma}_t| \, \d t  \geq 
\frac {|\Ph{\delta} u ( \gamma_b) - \Ph{\delta} u ( \gamma_a)|}{ 2^{1-1/q} \delta } 
\int_a^b | \dot{ \gamma}_t | \, \d t \geq \frac{1}{2} |\Ph{\delta} u ( \gamma_b) - \Ph{\delta} u ( \gamma_a)|,
$$
which proves \eqref{eqn:grad-discr-a-b}.
\end{proof}

We can now prove Theorem~\ref{thm:main}.

\noindent {\it Proof of the first inequality in \eqref{ts:est-f-ch}.} We prove that there exists a constant $\eta_1=\eta_1(c_D)$ such that 
\begin{equation}\label{ts:easy-ineq}
\Fh{q}(u) \leq \eta_1\int_X|\nabla f|_{*,q}^q\,\d\mm \qquad \forall u \in L^q(X, \mm).
\end{equation}
Let $u:X\to \R$ be a Lipschitz function with bounded support. We prove that
\begin{equation}\label{eqn:grad-lip}
| \Dh{\delta} u|^q(x) \leq 6^qc_D^3(\Lip (u,B(x,6\delta)))^q.
\end{equation}
Indeed, let us consider $i,\, j\in [1,\ell_\delta)$ such that $A_i^\delta$ and $A_j^\delta$ are neighbors.
For every $x\in A^\delta_i$, $y\in A^\delta_j$ we have that $\sfd(x,y) \leq (10/4+10/4+1)\delta= 6\delta$ and that $y\in 
B(z_i^\delta,19\delta/4)\subset B(z^\delta_i,5\delta)$.
Hence 
$$\frac { | u_{\delta,i} - u_{\delta,j}|}{\delta} \leq \frac1{\delta\mm(A_i^\delta)\mm(A_j^\delta)}
\int_{A_i^\delta\times A_j^\delta} |u(x)-u(y)| \, d\mm(x) \, d\mm(y) 
\leq 6\Lip (u,B(z_i^\delta,{5\delta})). $$
Thanks to the fact that the number of neighbors of $A_i^h$ does not exceed $c_D^3$ (see \eqref{rmk:finite-neighd}) we obtain
$$ |\Dh{\delta} u|^q(x) \leq  6^qc_D^3 (\Lip (u,B(x,6\delta)))^q\qquad\forall x\in\supp\mm,$$
which proves \eqref{eqn:grad-lip}.

Integrating on $X$ we obtain that
$$\Fh{q} (u) \leq 6^qc_D^3 \int_X (\Lip (u, B_{6\delta} (x)))^q \,\d \mm.$$
Choosing $\delta=\delta_k$, letting $k\to\infty$ and applying the dominated convergence theorem on the right-hand side
as well as the definition of asymptotic Lipschitz constant 
\eqref{defn:asymlip} we get
$$ \F{q}( u) \leq \liminf_{k\to\infty}\mathcal{F}_{q,\delta_k} (u) \leq 6^qc_D^3\int_X \Lipa^q (u, x) \,\d\mm.$$
By approximation, Proposition~\ref{prop:easy} yields \eqref{ts:easy-ineq} with $\eta_1= 6^q c_D^3$.

\noindent {\it Proof of the second inequality in \eqref{ts:est-f-ch}.}
We consider a sequence $(u_k)$ which converges to $u$ in $L^q(X, \mm)$ with $\liminf_k \mathcal{F}_{\delta_k,q}(u_k)$ finite. 
We prove that $u$ has a $q$-weak upper gradient and that  
\begin{equation}\label{eqn:lowbou} \frac 1{4^q} \int_X|\nabla u|_{w,q}^2\,\d\mm \leq \liminf_k \mathcal{F}_{\delta_k,q} (u_k).\end{equation}
Then, \eqref{ts:est-f-ch} will follow easily from \eqref{ts:easy-ineq} and the coincidence of weak and relaxed gradients.

Without loss of generality we assume that the right-hand side is finite and, up to a subsequence not relabeled, we assume that the
$\liminf$ is a limit. Hence, the sequence $f_k:=|\Dh{\delta_k} u_k|$ is bounded in $L^q(X,\mm)$ and, by weak compactness, there exist 
$g\in L^q(X,\mm)$ and a subsequence $k(h)$ such that $f_{k(h)} \rightharpoonup g$ weakly in $L^q(X,\mm)$. 
By the lower semicontinuity of the $q$-norm with respect to the weak convergence, we have that
\begin{equation}\label{eqn:ggamma} \int_X g^q\,\d\mm \leq \liminf_{h\to\infty}\int_X f_{k(h)}^q\,\d\mm=\lim_{k\to \infty} \Fh{\delta_k,q} (u_k).
\end{equation}
We can now apply Theorem~\ref{thm:stabweak} to the functions $\bar{u}_h=\Ph{\delta_{k(h)}}(u_{k(h)})$,
which converge to $u$ in $L^q(X,\mm)$ thanks to Lemma~\ref{lem:contractivity}, and to the functions $g_h=4 f_{k(h)}$ which are
$q$-weak upper gradients of $\bar{u}_h$ up to scale $\delta_{k(h)}/2$, thanks to Lemma~\ref{lem:ugdiscrete}. 
We obtain that $4g$ is a weak upper gradient of $u$, hence $g \geq  |\nabla u |_{w,q}/4$ $\mm$ a.e. in $X$. Therefore 
\eqref{eqn:ggamma} gives
$$ \frac{1}{4^q} \int_X|\nabla u|_{w,q}^q \,\d\mm\leq   \int_X g^q\,\d\mm \leq  \lim_{k\to\infty} \Fh{\delta_k,q}(u_k). $$ 

\noindent {\it Proof of statement (b).} Let $\Nph{\delta}: L^q(X,\mm) \to [0,\infty]$ be the positively $1$-homogeneous function
$$
\Nph{\delta}(u) = \left( \|\Ph{\delta} u\|^q_q+\Fh{\delta} (u) \right)^{1/q} \qquad \forall u\in L^q(X, \mm).
$$
For $q\geq 2$ we prove that $\Nph{\delta}$ satisfies the first Clarkson inequality \cite{Hewitt-Stronberg}
\begin{equation}\label{eqn:clark}
\Nph{\delta}^q\left(\frac{u+v}{2}\right) + \Nph{\delta}^q\left(\frac{u-v}{2}\right) \leq \frac{1}{2} 
\big(\Nph{\delta}^q(u)+\Nph{\delta}^q(v) \big)
\qquad\forall u,\,v \in L^q(X, \mm).
\end{equation}
Indeed, let $X_\delta\subset\N\cup (\N\times\N)$ be the (possibly infinite) set
$$X_\delta=[1,\ell_\delta) \cup  \left\{(i,j) \in [1,\ell_\delta)\times [1,\ell_\delta):\ A^\delta_i \sim A^\delta_j\right\}$$
and let $\mm_\delta$ be the counting measure on $X_\delta$.
We consider the function $\Phi_{q,\delta}:  L^q(X, \mm) \to  L^q(X_\delta, \mm_\delta)$ defined by
\[\begin{cases}
\Phi_{q,\delta}[u](i)= (\mm(A^\delta_i))^{1/q} u_{\delta,i} &\qquad \forall i\in [1,\ell_\delta)\\
\displaystyle{\Phi_{q,\delta}[u]((i,j))= (\mm(A^\delta_i))^{1/q} \frac{u_{\delta,i} - u_{\delta,j}} {\delta} } 
&\qquad \forall (i,j)\in [1,\ell_\delta)\times [1,\ell_\delta)\,\,\text{ s.t.\ } 
A^\delta_i \sim A^\delta_j.
\end{cases}
\]
It can be easily seen that $\Phi_{q,\delta}$ is linear and that 
\begin{equation}\label{eqn:phi-norm}
\|\Phi_{q,\delta}(u)\|_{L^q(X_\delta, \mm_\delta)} = \Nph{\delta}(u) \qquad \forall u\in L^q(X, \mm).
\end{equation}

Writing the first Clarkson inequality in the space $L^q(X_h, \mm_h)$ and using the linearity of $\Phi_{q,\delta}$ we immediately
obtain \eqref{eqn:clark}.
%
%
Let $\omega:(0,1)\to(0,\infty)$ be the increasing and continuous modulus of continuity $\omega(r) = 1-(1-r^q/2^q)^{1/q}$. 
From \eqref{eqn:clark} it follows that for all $u,\,v\in L^q(X, \mm)$ with $\Nph{\delta}(u)=\Nph{\delta}(v)=1$ it holds 
$$
\Nph{\delta}\left(\frac{u+v}{2}\right) \leq 1- \omega(\Nph{\delta}\left({u-v}\right)).$$
Hence $\Nph{\delta}$ are uniformly convex with the same modulus of continuity $\omega$. Thanks to Lemma~\ref{lemma:unif-conv} 
we conclude that also the $\Gamma$-limit of these norms, namely \eqref{eqn:equiv-metr}, is uniformly convex with the same modulus of continuity.

If $q<2$ the proof can be repeated substituting the first Clarkson inequality \eqref{eqn:clark} with the second one 
$$\left[\Nph{\delta}\left(\frac{u+v}{2}\right)\right]^p + 
\left[\Nph{\delta}\left(\frac{u-v}{2}\right)\right]^p \leq \left[\frac{1}{2} \bigl(\Nph{\delta}(u))^q+\frac{1}{2} \bigl(\Nph{\delta}(v)\bigr)^q \right]^{1/(q-1)} 
\quad \forall u,\,v \in L^q(X, \mm)$$
where $p = q/(q-1)$, see \cite{Hewitt-Stronberg}. In this case the modulus $\omega$ is $1-(1-(r/2)^p)^{1/p}$.

Finally, let us consider the case $q=2$. From the Clarkson inequality we get
\begin{equation}\label{eqn:quadr-1}
\Ftwo\left(\frac{u+v}{2}\right) + \Ftwo\left(\frac{u-v}{2}\right)\leq 2 \big(\Ftwo(u)+\Ftwo(v) \big).
\end{equation}
If we apply the same inequality to $u=(u'+v')/2$ and $v=(u'-v')/2$ we obtain a converse inequality and, since
$u'$ and $v'$ are arbitrary, the equality.

\section{Lower semicontinuity of the slope of Lipschitz functions}

Let us recall, first, the formulation of the Poincar\'e inequality in metric measure spaces.

\begin{definition}\label{dfn:PI}
The metric measure space $(X,\sfd ,\mm)$ supports a weak $(1,q)$-Poincar\'e inequality if there exist 
constants $ \tau ,\, \Lambda >0$ such that for every $u \in W^{1,q}(X,\sfd, \mm)$ and for every $x\in\supp\mm$, $r>0$ the following holds:
\begin{equation}\label{eqn:PI}
\fint_{ B(x,r)} \biggl| u - \fint_{B(x,r)}u \biggr|\,\d\mm \leq \tau \, r \,\left( \fint_{B_{\Lambda r}(x)} | \nabla u |^q_{w,q} \,\d\mm \right)^{1/q} .
\end{equation} 
\end{definition}

Many different and equivalent formulations of \eqref{eqn:PI} are possible: for instance we may replace in the right hand
side $|\nabla u|_{w,q}^q$ with $|\nabla u|^q$, requiring the validity of the inequality for Lipschitz functions only. The equivalence
of the two formulations has been first proved in \cite{Heinonen-Koskela99}, but one can also use the equivalence of weak and relaxed gradients
to establish it. Other formulations involve the median, or replace the left hand side by
$$
\inf_{m\in\R}\fint_{ B(x,r)} \bigl| u - m \bigr|\,\d\mm\,.
$$
The following lemma contains the fundamental estimate to prove our result.

\begin{lemma}\label{lem:telescopic} Let $(X,\sfd,\mm)$ be a doubling metric measure space 
which supports a weak $(1,q)$-Poincar\'e inequality with constants $\tau,\,\Lambda$. 
Let $u \in W^{1,q}(X,\sfd,\mm)$ and let $g=|\nabla u|_{w,q}$. There exists a constant $C>0$ depending only on the doubling constant
$\tilde c_D$ and $\tau$ such that 
\begin{equation}\label{eqn:tele2} | u(x) - u(y) | \leq C\sfd(x,y)( M^{2\Lambda\sfd(x,y)}_q g (x) + M^{2\Lambda\sfd(x,y)}_q g (y) ),
\end{equation}
for every Lebesgue points $x,\,y\in X$ of (a representative of) $u$.
\end{lemma}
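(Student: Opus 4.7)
The plan is the standard telescopic chain argument (well known in the analysis on doubling Poincar\'e spaces): write $u(x)$ and $u(y)$ as limits of averages over geometrically shrinking balls and control each link of the chain by the Poincar\'e inequality. Set $r:=\sfd(x,y)$, $r_i:=2^{-i}r$ and $B_i^x:=B(x,r_i)$, $B_i^y:=B(y,r_i)$. Since $x$ and $y$ are Lebesgue points, by Lemma~\ref{lem:leb} applied with $E_n=B_n^x$ (respectively $E_n=B_n^y$) we have $\fint_{B_i^x}u\,\d\mm\to u(x)$ and $\fint_{B_i^y}u\,\d\mm\to u(y)$ as $i\to\infty$.

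First I would perform the telescope at $x$: writing
\[
u(x)-\fint_{B_0^x}u\,\d\mm=\sum_{i=0}^\infty\Big(\fint_{B_{i+1}^x}u\,\d\mm-\fint_{B_i^x}u\,\d\mm\Big),
\]
each summand is estimated by
\[
\Big|\fint_{B_{i+1}^x}u\,\d\mm-\fint_{B_i^x}u\,\d\mm\Big|
\leq\frac{\mm(B_i^x)}{\mm(B_{i+1}^x)}\fint_{B_i^x}\bigl|u-(u)_{B_i^x}\bigr|\,\d\mm
\leq \tilde c_D\,\tau\,r_i\Big(\fint_{B(x,\Lambda r_i)}g^q\,\d\mm\Big)^{1/q},
\]
where we used the doubling property \eqref{eqn:doubll} for the ratio and the $(1,q)$-Poincar\'e inequality \eqref{eqn:PI} on $B_i^x$. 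Since each radius $\Lambda r_i\leq 2\Lambda r$, the rightmost factor is bounded by $M^{2\Lambda r}_q g(x)$. Summing the geometric series $\sum_i r_i=2r$ yields
\[
\Big|u(x)-\fint_{B_0^x}u\,\d\mm\Big|\leq 2\tilde c_D\tau\, r\, M^{2\Lambda r}_q g(x),
\]
and symmetrically for $y$.

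It remains to bridge $\fint_{B_0^x}u\,\d\mm$ and $\fint_{B_0^y}u\,\d\mm$. Both balls lie in $B(x,2r)$, and since $B(x,2r)\subset B(y,3r)\subset B(y,4r)$, two applications of \eqref{eqn:doubll} give $\mm(B(x,2r))\leq \tilde c_D^2\,\mm(B_0^y)$ (a single application handles the $x$ side). Hence
\[
\Big|\fint_{B_0^x}u\,\d\mm-\fint_{B(x,2r)}u\,\d\mm\Big|+\Big|\fint_{B_0^y}u\,\d\mm-\fint_{B(x,2r)}u\,\d\mm\Big|
\leq 2(\tilde c_D+\tilde c_D^2)\fint_{B(x,2r)}\bigl|u-(u)_{B(x,2r)}\bigr|\,\d\mm,
\]
and the Poincar\'e inequality on $B(x,2r)$ bounds the last average by $2\tau r\, M^{2\Lambda r}_q g(x)$. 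Combining the four contributions via the triangle inequality delivers \eqref{eqn:tele2} with a constant $C$ depending only on $\tilde c_D$ and $\tau$.

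The only subtle point, and the main obstacle, is ensuring that the chain of averages actually converges to the pointwise values $u(x),u(y)$: this requires the $q$-Lebesgue point property, which is exactly what Lemma~\ref{lem:leb} is designed for, and that is why the statement restricts to Lebesgue points of $u$. The rest is book-keeping: checking that every ball we sample has radius $\leq 2\Lambda r$ (so the maximal function $M^{2\Lambda r}_q g$ dominates all relevant local $L^q$ averages of $g$), and that the measure-ratio constants arising from comparing balls centred at $x$ versus at $y$ absorb into a single constant $C(\tilde c_D,\tau)$.
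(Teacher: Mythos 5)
Your proof is correct and follows essentially the same route as the paper's: a telescoping chain of ball averages at each of $x$ and $y$ controlled link-by-link via doubling plus the $(1,q)$-Poincar\'e inequality, with the Lebesgue-point hypothesis (via Lemma~\ref{lem:leb}) guaranteeing convergence of the averages to the pointwise values, and a single bridging step comparing $\fint_{B(x,r)}u$ with $\fint_{B(y,r)}u$ through a ball of comparable radius. The only cosmetic difference is that the paper packages the per-link estimate as a single inequality $|u_{x,s}-u_{y,2s}|\le C_0\, s\, M^{2\Lambda s}_q g(y)$ and bridges through $B(y,2r)$ rather than $B(x,2r)$; the constants and the logic are the same.
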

\begin{proof} The main estimate in the proof is the following. Denoting by $u_{z,r}$ the mean value of $u$ on $B(z,r)$,
for every $s>0$, $x,\,y \in X$ such that $B(x,s) \subset B(y,2s)$ we have that
\begin{equation}\label{eqn:fund-est}
 |u_{x,s} - u_{y,2s}| \leq C_0(\tilde{c}_D,\tau) s M^{2 \Lambda s}_q g(y).
\end{equation}
Since $\mm$ is doubling and the space supports $(1,q)$-Poincar\'e inequality, from  \eqref{eqn:doubl} we have that 
$$
| u_{x,s} - u_{y,2s} | \leq \fint_{B(x,s)} |u - u_{y,2s} | \,\d\mm  
\leq \beta  2^{ \alpha} \fint_{ B (y,2s) } |u - u_{y,2s} | \,\d\mm\\
\leq  2^{1+ \alpha}\beta \tau s \left( \fint_{ B (y, 2 \Lambda s ) } g^q \,\d\mm \right)^{1/q}
$$
and we obtain \eqref{eqn:fund-est} with $C_0= 2^{1+ \alpha}\beta \tau$.

For every $r>0$ let $s_n = 2^{-n}r$ for every $n\geq 1$. 
If $x$ is a Lebesgue point for $u$ then $u_{x,s_n} \to u(x)$ as $n\to \infty$. Hence, applying 
\eqref{eqn:fund-est} to $x=y$ and $s_n = 2^{-n}r$,  summing on $n\geq 1$ and remarking that $M^{2\Lambda s_n}_q g \leq M^{\Lambda r}_q g$, 
we get
\begin{equation}\label{eqn:u-media}
|u_{x,r} - u(x)| \leq \sum_{n=0}^{\infty} |u_{x,s_n} - u_{x,2s_n}| \leq \sum_{n=0}^{\infty} C_0 s_n M^{\Lambda r}_q g (x) = C_0 r M^{\Lambda r}_q g (x).
\end{equation}

For every $r>0$, $x,\,y$ Lebesgue points of $u$ such that $B(x,r) \subset B(y,2r)$, we can use the triangle inequality, 
\eqref{eqn:fund-est} and \eqref{eqn:u-media} to get
\begin{align*}
 |u (x)  - u(y)| & \leq |u(x)- u_{x,r} | + |u_{x,r}- u_{y,2r} | + |u_{y,2r}-u(y)| \\
& \leq C_0 r M^{\Lambda r}_q g(x) +C_0 r M^{2 \Lambda r}_q g(y) + C_0 r M^{\Lambda r}_q g(y).
\end{align*}
Taking $r= \sfd (x,y)$ (which obviously implies $B(x,r) \subset B(y,2r)$) and since $M^{\ep}_q f (x)$ is nondecreasing in $\ep$ 
we obtain \eqref{eqn:tele2} with $C=2C_0$. 
\end{proof}

\begin{proposition}\label{prop:slope-weak} Let $(X,\sfd,\mm)$ be a doubling metric measure space, supporting a 
weak $(1,q)$-Poincar\'e inequality with constants $\tau,\,\Lambda$ and with $\supp\mm=X$
There exists a constant $C>0$ depending only on the
doubling constant $\tilde c_D$ and $\tau$ such that  
\begin{equation}\label{eqn:slope-weakgr-comp}
| \nabla u |\leq C \, |\nabla u |_{w,q} \quad\text{$\mm$-a.e. in $X$}
\end{equation}
for any Lipschitz function $u$ with bounded support.
\end{proposition}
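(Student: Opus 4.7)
The strategy is to apply the telescopic estimate \eqref{eqn:tele2} of Lemma~\ref{lem:telescopic} with $g := |\nabla u|_{w,q}$, divide by $\sfd(x,y)$, and pass to the $\limsup$ as $y\to x$. Fix $x\in\supp\mm=X$ a Lebesgue point of $g^q$, which covers $\mm$-a.e.\ $x$ by the doubling hypothesis and \eqref{eqn:leb1}. Since $u$ is Lipschitz, the $\limsup$ in the definition of $|\nabla u|(x)$ is unchanged if $y$ is restricted to any dense set: given any $y_n\to x$, a small perturbation of $y_n$ changes the difference quotient $|u(y_n)-u(x)|/\sfd(x,y_n)$ by at most $\Lip(u)\,\sfd(x,y_n)/n$ (since the denominator changes by a factor $1+O(1/n)$), so the limsup is preserved. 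In particular we may assume $y$ ranges over the full-measure set $L$ of Lebesgue points of $g^q$, which is dense in $\supp\mm=X$.

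For such $x$ and $y\in L$ with $\delta:=\sfd(x,y)$, Lemma~\ref{lem:telescopic} gives
$$\frac{|u(y)-u(x)|}{\delta} \leq C\bigl(M_q^{2\Lambda\delta} g(x) + M_q^{2\Lambda\delta} g(y)\bigr).$$
The first term converges to $g(x)$ as $\delta\downarrow 0$, by the very definition of Lebesgue point.

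The heart of the proof is to bound $\limsup_{y\to x,\,y\in L} M_q^{2\Lambda\delta}g(y)$ by a constant multiple of $g(x)$. For any $s\in(0,2\Lambda\delta]$, the inclusion $B(y,s)\subset B(x,(2\Lambda+1)\delta)\subset B(y,(2\Lambda+2)\delta)$ combined with \eqref{eqn:doubl} gives
$$\fint_{B(y,s)}g^q\,\d\mm \;\leq\; \beta\Bigl(\tfrac{(2\Lambda+2)\delta}{s}\Bigr)^{\!\alpha} \fint_{B(x,(2\Lambda+1)\delta)} g^q\,\d\mm,$$
and the rightmost average tends to $g(x)^q$ as $\delta\to 0$ by the Lebesgue point property at $x$. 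This directly controls the scales $s\ge c\delta$ for any fixed $c>0$ by a constant depending only on $\tilde c_D$ and $\Lambda$. For the small scales $s\ll\delta$, we further restrict $y$ to the set $\{y\in L:\ g(y)\leq 2g(x)\}$; by Markov's inequality applied at the Lebesgue point $x$, this set has density $1$ at $x$, hence accumulates at $x$, and by the continuity-based argument of the first paragraph the slope $|\nabla u|(x)$ may be computed using only these $y$. For such $y$, Lemma~\ref{lem:leb} applied to $E_n=B(y_n,s_n)\subset B(x,(2\Lambda+1)\delta_n)$ with $\tau=s_n/((2\Lambda+1)\delta_n)$ shows that the averages at small scales are asymptotically controlled by $g(y)^q\leq 2^q g(x)^q$. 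Combining the two regimes, $\limsup_{y\to x,y\in L} M_q^{2\Lambda\delta}g(y)\leq C_2\,g(x)$, yielding \eqref{eqn:slope-weakgr-comp} with $C=C(\tilde c_D,\tau)$.

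The main obstacle is making the estimate of $M_q^{2\Lambda\delta}g(y)$ at the moving point $y$ uniform as $y\to x$: the scales $s$ below $\delta$ probe the behaviour of $g$ at a single point $y$, which a priori need not be comparable to $g(x)$. The resolution via the Markov-based restriction of $y$ to a density-$1$ subset, legitimised by the continuity of $u$, is the essential measure-theoretic point.
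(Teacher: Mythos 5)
Your skeleton is the right one and matches the paper's: apply Lemma~\ref{lem:telescopic} with $g=|\nabla u|_{w,q}$, divide by $\sfd(x,y)$ and pass to the limit, the whole difficulty being the term $M_q^{2\Lambda\sfd(x,y)}g(y)$ at the moving point $y$. But your resolution of that difficulty has a genuine gap in the small-scale regime. For scales $s$ with $s/\delta_n\to 0$ you invoke Lemma~\ref{lem:leb} with $\tau=s_n/((2\Lambda+1)\delta_n)$; the lemma requires $\tau$ to be a fixed positive constant independent of $n$, and its bound degenerates as $\tau\to 0$ (the constant is $\beta\tau^{-\alpha}$). More fundamentally, knowing that $y_n$ is a Lebesgue point of $g^q$ with $g(y_n)\le 2g(x)$ gives no uniform control of $\fint_{B(y_n,s)}g^q\,\d\mm$ for $s$ small relative to $\delta_n$: Lebesgue differentiation at $y_n$ is an asymptotic statement whose rate depends on $y_n$, while the supremum defining $M_q^{2\Lambda\delta_n}g(y_n)$ ranges over all scales down to $0$, including intermediate scales at which the average can be arbitrarily large even though $g(y_n)$ is small. (Your Markov step is also off as stated: Markov applied to $g^q$ at level $2^qg(x)^q$ only bounds the relative measure of the bad set by $2^{-q}$, not by $o(1)$; one needs \eqref{eqn:leb1} for $|g-g(x)|^q$, and the case $g(x)=0$ separately.)

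The paper's proof sidesteps the moving-point maximal function entirely. Fix $\lambda>0$ in advance and replace $u(y_n)$ by its average over the small ball $B_n=B(y_n,\lambda r_n)$, at the cost of an error $\lambda r_n\Lip(u)$; then average the pointwise estimate \eqref{eqn:tele2} over $y\in B_n$ and use monotonicity to bound $M_q^{4\Lambda r_n}g\le M_q^{\lambda}g$ for $n$ large. The quantity to control becomes $\fint_{B_n}M_q^{\lambda}g\,\d\mm$ for the \emph{fixed} function $M_q^\lambda g$, and Lemma~\ref{lem:leb} applies with the fixed parameter $\tau=\lambda/2$ at a Lebesgue point $x$ of $M_q^\lambda g$, giving the limit $M_q^\lambda g(x)$. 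One obtains $|\nabla u|(x)\le 2C\,M_q^\lambda g(x)+\lambda\Lip(u)$ and concludes by letting $\lambda\downarrow 0$ along a sequence for which $M_q^{\lambda}g\to g$ $\mm$-a.e. If you want to keep your structure, you need an analogous device (averaging in $y$, or a cutoff scale fixed independently of $n$) to decouple the maximal-function scales from the distance $\sfd(x,y_n)$; as written, the small-scale step does not close. A secondary point: the constant produced by your large-scale comparison depends on $\Lambda$ through $\beta((2\Lambda+2)/c)^\alpha$, whereas the statement (and the paper's argument) yields $C=C(\tilde c_D,\tau)$ only.
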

\begin{proof} We set $g= |\nabla u|_{w,q}$; we note that $g$ is bounded and with bounded support, thus $M^{\ep}_q g$ converges to $g$ in $L^q(X,\mm)$. Let 
us fix $\lambda>0$ and a Lebesgue point  $x$ for $u$ where \eqref{eqn:leb1} is satisfied by $M^\lambda_q g$. Let $y_n \to x$ be such that
\begin{equation}\label{eqn:slope}| \nabla u | (x) =\lim_{n \to \infty} \frac{ |u(y_n)-u(x)| } {\sfd(y_n,x)}
\end{equation}
and set $r_n=\sfd(x,y_n)$, $B_n=B(y_n,\lambda r_n)\subset B(x,2r_n)$.  
Since \eqref{eqn:tele2} of Lemma~\ref{lem:telescopic} holds for $\mm$-a.e. $y \in B_n$, 
from the monotonicity of $M^{\ep}_q g $ we get
\begin{equation*}\begin{split} | u(x) - u(y_n) | &\leq \fint_{B_n} |u(x) - u(y) | \,\d \mm + \lambda r_n\Lip(u, B_n) \\
& \leq Cr_n\left( M^{4\Lambda r_n}_q g (x) + \fint_{B_n} M^{4\Lambda r_n}_q g (y) \,\d\mm(y)\right) + \lambda r_n L,
\end{split}
\end{equation*}
where $L$ is the Lipschitz constant of $u$.
For $n$ large enough $B_n \subset B(x,1)$ and $4\Lambda r_n \leq\lambda$. Using monotonicity once more we get
\begin{equation}\label{eqn:ali2}
\begin{split} | u(x) - u(y_n) | & \leq C r_n\left( M^{\lambda}_q g (x) + \fint_{B_n} M^{\lambda}_q g \,\d\mm \right) + 
\lambda r_n L
\end{split}
\end{equation}
for $n$ large enough.
Since $B(y_n,r_n)=B_n\subset B(x,2r_n)$ and since $x$ is a $1$-Lebesgue point for $M^\lambda_q g$, 
we apply \eqref{eqn:leb2} of Lemma~\ref{lem:leb} to the sets $B_n$ to get
\begin{equation}\label{eqn:leb-points}
 \lim_{n\to\infty}\fint_{B_n} M^{\lambda}_q g \,\d\mm  = M^\lambda_q g(x). 
\end{equation}
We now divide both sides in \eqref{eqn:ali2} by $r_n=\sfd(x,y_n)$ and let $n\to\infty$. From \eqref{eqn:leb-points} and \eqref{eqn:slope} we get
$$ | \nabla u | (x) \leq 2 C M^\lambda_q g(x) + \lambda L .
$$
Since this inequality holds for $\mm$-a.e. $x$, we can choose an infinitesimal sequence $(\lambda_k)\subset (0,1)$ and use the
$\mm$-a.e. convergence of $M^{\lambda_k}_q g$ to $g$ to obtain \eqref{eqn:slope-weakgr-comp}.
\end{proof}

\begin{theorem}\label{thm:main1} Let $(X,\sfd,\mm)$ be a metric measure space with $\mm$ doubling,  
which supports a weak $(1,q)$-Poincar\'e inequality and satisfies $\supp\mm=X$. Then, 
for any open set $A\subset X$ it holds 
\begin{equation}\label{ts:lsc}
u_n,\,u\in\Lip_{\rm loc}(A),\,\,u_n\to u\,\,\,\text{in $L^1_{\rm loc}(A)$}\quad\Longrightarrow\quad
\liminf_{n\to\infty} \int_A |\nabla u_n|^q\,\d\mm\geq\int_A |\nabla u|^q \,\d\mm.
\end{equation}
In particular, understanding weak gradients according to \eqref{eq:locaopen},  it holds
 $|\nabla u|=|\nabla u |_{w,q}$ $\mm$-a.e. in $X$ for all $u\in\Lip_{\rm loc}(X)$.
\end{theorem}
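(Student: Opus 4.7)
My plan is to prove the lower semicontinuity \eqref{ts:lsc} first and to derive the ``in particular'' equality as a corollary. The argument combines Theorem~\ref{thm:stabweak} (stability of $q$-weak upper gradients under weak $L^q$ convergence) with Proposition~\ref{prop:slope-weak} (pointwise comparison between slope and weak gradient for Lipschitz functions).

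First I reduce. Using the open-set locality \eqref{eq:locaopen} of the weak gradient together with a Lipschitz cutoff, I may assume without loss of generality that $A$ is bounded with $\mm(A)<\infty$, and that $u, u_n$ are globally Lipschitz on $X$ with bounded support (the slopes inside $A$ are unaffected by the cutoff). If $\liminf_n \int_A |\nabla u_n|^q\,\d\mm=+\infty$ there is nothing to prove; otherwise I pass to a subsequence realizing the liminf, along which $u_n\to u$ $\mm$-a.e.\ on $A$ (via a further subsequence of the $L^1_{\rm loc}$ convergence) and $|\nabla u_n|\chi_A\rightharpoonup g$ weakly in $L^q(X,\mm)$ for some nonnegative $g\in L^q$.

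Next I transfer the lsc to the weak gradient. Since $|\nabla u_n|$ is a pointwise (hence $q$-weak) upper gradient of $u_n$, Theorem~\ref{thm:stabweak} applied with $\varepsilon_n=0$ implies that $g$ is a $q$-weak upper gradient of $u$ on $A$. Pointwise minimality of $|\nabla u|_{w,q}$ then gives $g\ge|\nabla u|_{w,q}$ $\mm$-a.e.\ on $A$, and the weak lower semicontinuity of the $L^q$ norm yields the intermediate bound
\[
\int_A |\nabla u|_{w,q}^q\,\d\mm \le \int_A g^q\,\d\mm \le \liminf_n \int_A |\nabla u_n|^q\,\d\mm.
\]

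The remaining step, which I expect to be the main obstacle, is the sharp identification $|\nabla u|=|\nabla u|_{w,q}$ $\mm$-a.e.\ for Lipschitz $u$ with bounded support. The easy direction is $|\nabla u|_{w,q}\le |\nabla u|$ a.e., while Proposition~\ref{prop:slope-weak} gives only the non-sharp reverse bound $|\nabla u|\le C(\tilde c_D,\tau)|\nabla u|_{w,q}$. To remove the constant, I would perform a refined Lebesgue-point analysis at $\mm$-a.e.\ $q$-Lebesgue point $x$ of $g:=|\nabla u|_{w,q}$: combining Lemma~\ref{lem:telescopic} and Lemma~\ref{lem:leb} applied to the balls $B(y_n,2\Lambda\sfd(x,y_n))\subset B(x,(1+2\Lambda)\sfd(x,y_n))$ for any sequence $y_n\to x$ realizing $|\nabla u|(x)$, I would show that the maximal functions $M_q^{2\Lambda\sfd(x,y_n)}g(x)$ and $M_q^{2\Lambda\sfd(x,y_n)}g(y_n)$ both tend to $g(x)$ and argue that in this limit the multiplicative constant in Lemma~\ref{lem:telescopic} can be driven to $1$ -- essentially a weak form of Cheeger's differentiation theorem in the doubling-Poincar\'e setting. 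Once the sharp equality is in hand, substituting into the intermediate bound gives the stated lsc $\int_A|\nabla u|^q\,\d\mm \le \liminf_n\int_A|\nabla u_n|^q\,\d\mm$, and the pointwise identity for general $u\in \Lip_{\rm loc}(X)$ follows from the local character of both sides and the definition \eqref{eq:locaopen}.
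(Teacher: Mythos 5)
Your first two steps are sound, but the proof has a genuine gap exactly where you anticipate ``the main obstacle'': the sharp identification $|\nabla u|=|\nabla u|_{w,q}$ for Lipschitz $u$ is never actually established, only hoped for. The mechanism you propose -- a refined Lebesgue-point analysis driving the constant of Lemma~\ref{lem:telescopic} to $1$ -- does not work: that constant ($C=2C_0$ with $C_0=2^{1+\alpha}\beta\tau$) is a structural constant coming from the chaining argument and the doubling/Poincar\'e data, and it survives the blow-up at a Lebesgue point. Indeed Proposition~\ref{prop:slope-weak} already performs precisely this Lebesgue-point analysis and the best it yields is $|\nabla u|\le 2C\,M^\lambda_q g(x)+\lambda L$, hence $|\nabla u|\le C'\,|\nabla u|_{w,q}$ with a constant. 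Removing the constant is equivalent to the ``in particular'' conclusion of the theorem itself, so your argument reduces the statement to its hardest part and then leaves that part unproved.

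The paper's proof circumvents the need for the sharp constant entirely, and this is the idea you are missing. After localizing to $B\subset\subset A$ with a cutoff $\psi$ (so $v_n=u_n\psi$, $v=u\psi$ are globally Lipschitz and bounded in $W^{1,q}$), one invokes the reflexivity of $W^{1,q}$ (Corollary~\ref{cor:refl}) to extract a weakly convergent subsequence, and then Mazur's lemma to produce convex combinations $\hat v_n$ of the tails converging \emph{strongly} to $v$ in $W^{1,q}(X,\sfd,\mm)$. Subadditivity of the slope gives
\begin{equation*}
\Bigl(\int_B|\nabla v|^q\,\d\mm\Bigr)^{1/q}\le
\liminf_{n\to\infty}\Bigl(\int_B|\nabla \hat v_n|^q\,\d\mm\Bigr)^{1/q}
+\limsup_{n\to\infty}\Bigl(\int_B|\nabla (v-\hat v_n)|^q\,\d\mm\Bigr)^{1/q},
\end{equation*}
where the first term is controlled by $\limsup_n(\int_B|\nabla v_n|^q\,\d\mm)^{1/q}$ via convexity, and the second is bounded by $C^{1/q}\limsup_n\|v-\hat v_n\|_{W^{1,q}}=0$ using Proposition~\ref{prop:slope-weak} applied to the differences $v-\hat v_n$. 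The non-sharp constant $C$ thus multiplies only a vanishing quantity. The identity $|\nabla u|=|\nabla u|_{w,q}$ is then a \emph{consequence} of the lower semicontinuity (apply it to the recovery sequence of Proposition~\ref{prop:easy} and use $|\nabla f_n|\le\Lip_a(f_n,\cdot)$ together with the trivial inequality $|\nabla u|_{w,q}\le|\nabla u|$), not an ingredient of its proof. Your weak-compactness/stability step via Theorem~\ref{thm:stabweak} is correct but ultimately not needed once the argument is organized this way.
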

\begin{proof} By a simple truncation argument we can assume that all functions $u_n$ are uniformly bounded, since
$|\nabla (M\land v\lor -M)|\leq |\nabla v|$ and $|\nabla (M\land v\lor -M)|\uparrow |\nabla v|$ as $M\to\infty$. 
Possibly extracting a subsequence we can also assume that the $\liminf$ in the right-hand side of \eqref{ts:lsc} is a limit and,
without loss of generality, we can also assume that it is finite. 
Fix a bounded open set $B$ with ${\rm dist}(B,X\setminus A)>0$ and let $\psi:X\to [0,1]$ be a cut-off Lipschitz function
identically equal to $1$ on a neighbourhood of $B$, with support bounded and contained in $A$. It is clear that the functions $v_n:=u_n\psi$ and
$v:=u\psi$ are globally Lipschitz, $v_n\to v$ in $L^q(X,\mm)$ and $(v_n)$ is bounded in $W^{1,q}(X,\sfd,\mm)$.

\null
From the reflexivity of this space proved in Corollary~\ref{cor:refl} we have that, possibly extracting a subsequence, $(v_n)$ 
weakly converges in the Sobolev space to a function $w$. Using Mazur's lemma, we construct another sequence $(\hat{v}_n)$ 
that is converging strongly to $w$ in $W^{1,q}(X,\sfd,\mm)$ and $\hat{v}_n$ is a finite convex combination of $v_n, v_{n+1} , \ldots $. In particular 
we get $\hat{v}_n\to w$ in $L^q(X,\mm)$ and this gives $w=v$. Moreover, 
$$ \int_B| \nabla \hat{v}_n|^q \,\d\mm \leq \sup_{k \geq n} \int_B | \nabla v_k|^q \,\d\mm.$$
Eventually, from Proposition~\ref{prop:slope-weak} applied to the functions $v - \hat{v}_n$ we get:
\begin{align*}
 \left( \int_B | \nabla v|^q \,\d\mm\right)^{1/q} & \leq \liminf_{n\to \infty} \left\{ \left( \int_B |\nabla \hat{v}_n |^q \,\d\mm\right)^{1/q} + 
 \left( \int_B |\nabla ( v - \hat{v}_n) |^q \, \d\mm \right)^{1/q}  \right\} \\
& \leq \limsup_{n \to \infty} \left\{ \left(\int_B | \nabla v_n|^q \,\d\mm \right)^{1/q}\right\} + C^{1/q}
 \limsup_{n \to \infty} \| v - \hat{v}_n \|_{W^{1,q}} \\
& =\limsup_{n\to \infty}  \left(\int_B | \nabla v_n|^q \,\d\mm \right)^{1/q}.
\end{align*}
Since $v_n\equiv u_n$ and $v\equiv u$ on $B$ we get
$$
\int_B | \nabla u|^q \,\d\mm\leq\limsup_{n\to\infty}\int_B|\nabla u_n|^q\,\d\mm\leq
\lim_{n\to\infty}\int_A|\nabla u_n|^q\,\d\mm
$$
and letting $B\uparrow A$ gives the result.
\end{proof}

\section{Appendix A: other notions of weak gradient}

In this section we consider different notions of weak gradients, all easily seen
to be intermediate between $|\nabla f|_{w,q}$ and $|\nabla f|_{*,q}$, and therefore
coincident, as soon as Theorem~\ref{thm:graduguali} is invoked.
These notions inspired those adopted in \cite{Ambrosio-Gigli-Savare11}.

\subsection{$q$-relaxed upper gradients and $|\nabla f|_{C,q}$}

In the relaxation procedure we can consider, instead of pairs $(f,\Lip_a f)$ 
(i.e. Lipschitz functions and their asymptotic Lipschitz constant),
pairs $(f,g)$ with $g$ upper gradient of $f$.

\begin{definition}[$q$-relaxed upper gradient]\label{def:cheeger00} We say that
$g\in L^q(X,\mm)$ is a $q$-relaxed upper gradient of $f\in
L^q(X,\mm)$ if there exist $\tilde{g}\in L^q(X,\mm)$, functions
$f_n\in L^q(X,\mm)$ and upper gradient $g_n$ of $f_n$ such that:
\begin{itemize}
\item[(a)] $f_n\to f$ in $L^q(X,\mm)$ and $g_n$ weakly converge to
$\tilde{g}$ in $L^q(X,\mm)$;
\item[(b)] $\tilde{g}\leq g$ $\mm$-a.e. in $X$.
\end{itemize}
We say that $g$ is a minimal $q$-relaxed upper gradient of $f$ if
its $L^q(X,\mm)$ norm is minimal among $q$-relaxed upper gradients.
We shall denote by $|\nabla f|_{C,q}$ the minimal $q$-relaxed upper
gradient.
\end{definition}

Again it can be proved (see \cite{Cheeger00}) that $|\nabla f|_{C,q}$ is local, and clearly
\begin{equation}\label{eq:weakgradients1}
|\nabla f|_{C,q}\leq |\nabla f|_{*,q}\qquad\text{$\mm$-a.e. in $X$}
\end{equation}
because any $q$-relaxed slope is a $q$-relaxed upper gradient. On the other hand,
the stability property of $q$-weak upper gradients stated in Theorem~\ref{thm:stabweak} gives
\begin{equation}\label{eq:weakgradients2}
|\nabla f|_{w,q}\leq |\nabla f|_{C,q}\qquad\text{$\mm$-a.e. in $X$.}
\end{equation}
In the end, thanks to Theorem~\ref{thm:graduguali}, all these notions coincide $\mm$-a.e. in $X$.

Notice that one more variant of the ``relaxed'' definitions is the one considered in \cite{Ambrosio-Gigli-Savare11},
with pairs $(f,|\nabla f|)$. It leads to a weak gradient intermediate between the ones on
\eqref{eq:weakgradients1}, but a posteriori equivalent, using once more Theorem~\ref{thm:graduguali}.

\subsection{$q$-upper gradients and $|\nabla f|_{S,q}$}

Here we recall a weak definition of upper gradient, taken from
\cite{Koskela-MacManus} and further studied in
\cite{Shanmugalingam00} in connection with the theory of Sobolev
spaces, where we allow for exceptions in \eqref{eq:uppergradient}.
This definition inspired the one given in \cite{Ambrosio-Gigli-Savare11}, based on test plans.

Recall that, for $\Gamma\subset AC([0,1],X)$, the $q$-modulus ${\rm
Mod}_q(\Gamma)$ is defined by 
\begin{equation}
\label{eq:defmod2} {\rm
Mod}_q(\Gamma):=\inf\Big\{\int_X\rho^q\,\d\mm: \ \int_\gamma\rho\geq
1\ \ \forall \gamma\in\Gamma\Big\}.
\end{equation}
We say that $\Gamma$ is ${\rm Mod}_q$-negligible if ${\rm
Mod}_q(\Gamma)=0$. Accordingly, we say that a Borel function
$g:X\to[0,\infty]$ is a $q$-upper gradient of $f$ if there exist a
function $\tilde f$ and a ${\rm Mod}_q$-negligible set $\Gamma$ such
that $\tilde{f}=f$ $\mm$-a.e. in $X$ and
\[
\big|\tilde f(\gamma_0)-\tilde f(\gamma_1)\big|\leq\int_\gamma
g\qquad\forall \gamma\in AC([0,1],X)\setminus\Gamma.
\]
It is not hard to prove that the collection of all  $q$-upper
gradients of $f$ is convex and closed, so that we can call minimal
$q$-upper gradient, and denote by $|\nabla f|_{S,q}$, the element
with minimal $L^q(X,\mm)$ norm. Furthermore, the inequality
\begin{equation}\label{allinequalities2}
|\nabla f|_{S,q}\leq|\nabla f|_{C,q}\qquad\text{$\mm$-a.e. in $X$}
\end{equation}
(namely, the fact that all $q$-relaxed upper gradients are $q$-upper
gradients) follows by a stability property of $q$-upper gradients
very similar to the one stated in Theorem~\ref{thm:stabweak} 
for $q$-weak upper gradients, see
\cite[Lemma~4.11]{Shanmugalingam00}. 

Observe that for a Borel set $\Gamma\subset C([0,1],X)$ and a test
plan $\ppi$, integrating on $\Gamma$ w.r.t. $\ppi$ the inequality
$\int_\gamma \rho\geq 1$ and then minimizing over $\rho$, we get
$$
\ppi(\Gamma)\leq (C(\ppi))^{1/q}\bigl({\rm
Mod}_q(\Gamma)\bigr)^{1/q}\biggl(\iint_0^1|\dot\gamma|^p\,\d
s\,\d\ppi(\gamma)\biggr)^{1/p},
$$
which shows that any ${\rm Mod}_q$-negligible set of curves is also
$q$-negligible according to Definition~\ref{def:testplans}. This
immediately gives that any $q$-upper gradient is a $q$-weak upper
gradient, so that
\begin{equation}\label{allinequalities3}
|\nabla f|_{w,q}\leq|\nabla f|_{S,q}\qquad\text{$\mm$-a.e. in $X$.}
\end{equation}
Combining \eqref{eq:weakgradients2},  \eqref{allinequalities2} and \eqref{allinequalities3}
we obtain that also $|\nabla f|_{S,q}$ coincides $\mm$-a.e. with all other gradients.

\section{Appendix B:  discrete gradients in general spaces}

Here we provide another type of approximation via discrete gradients which doesn't even require the space $(X,\sfd)$ to be doubling. 
We don't know whether this approximation can be used to obtain the reflexivity of $W^{1,q}(X,\sfd,\mm)$ even without doubling assumptions.

We slightly change the definition of discrete gradient: instead of taking the sum of the finite differences, that is forbidden due to the fact that the number of terms 
can not in general be uniformly bounded from above, we simply take the supremum among the finite differences. Let us fix a decomposition $A_i^\delta$ of
$\supp\mm$ as in 
Lemma~\ref{lemma:dec}. Let $u \in L^q(X, \mm)$ and denote by $u_{\delta,i}$ the mean of $u$ in $A_i^\delta$ as before. We consider the discrete gradient
$$ |\Dh{\delta} u |_{\infty} (x) =\frac 1{\delta} \sup_{A^\delta_j\sim A^\delta_i} \{ |u_{\delta,i} - u_{\delta,j}| \} \qquad \forall x \in A_i^\delta.$$

Then we consider the functional $\Fh{\delta}^{\infty}: L^q(X, \mm)\to [0,\infty]$ given by
 $$ \Fh{\delta}^{\infty} (u) := \int_X | \Dh{\delta} (u) |_{\infty}^q (x) \,\d\mm (x).$$
With these definitions, the following theorem holds.

\begin{theorem}\label{thm:mainnd}
Let $(X,\sfd,\mm)$ be a Polish metric measure space with $\mm$ finite on bounded sets.
Let $\F^{\infty}$ be a $\Gamma$-limit point of $\Fh{q,\delta}^{\infty}$ as $\delta\downarrow 0$, namely
$$ \F^{\infty}:= \Gamma \!- \! \lim_{k\to \infty} \mathcal{F}_{q,\delta_k}^{\infty}, $$
where $\delta_k\to 0$ and the $\Gamma$-limit is computed with respect to the $L^q(X,\mm)$-distance.
Then the functional $\F^{\infty}$ is equivalent to Cheeger's energy, namely there exists a constant $\eta_{\infty}=\eta_\infty(q)$ 
such that
\begin{equation}\label{ts:est-f-chnd}
\frac{1}{\eta_{\infty}} \Ch_q (u) \leq \F^{\infty} (u) \leq \eta_{\infty} \Ch_q (u)\qquad\forall u\in L^q(X,\mm).
\end{equation}
\end{theorem}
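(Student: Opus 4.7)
The plan is to mimic the proof of Theorem~\ref{thm:main}, exploiting the fact that taking a supremum (rather than a sum) over neighbors removes any need for a uniform bound on the number of neighbors, hence also for the doubling assumption in the combinatorial step. The two inequalities in \eqref{ts:est-f-chnd} will again be obtained separately, the first from a pointwise comparison with the asymptotic Lipschitz constant and the second from a discrete weak upper gradient property combined with Theorem~\ref{thm:stabweak}.

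For the upper bound, fix a Lipschitz $u$ with bounded support. If $A_i^\delta\sim A_j^\delta$, then (as in the proof of \eqref{eqn:grad-lip}) every $y\in A_j^\delta$ lies in $B(z_i^\delta,5\delta)$ and $\sfd(x,y)\le 6\delta$ for all $x\in A_i^\delta$, so
$$
\frac{|u_{\delta,i}-u_{\delta,j}|}{\delta}\le 6\,\Lip\bigl(u,B(z_i^\delta,5\delta)\bigr).
$$
Since we are taking the $\sup$ over neighbors instead of an $\ell^q$-sum, we get directly
$$
|\mathcal D_\delta u|_\infty(x)\le 6\,\Lip\bigl(u,B(x,6\delta)\bigr)\qquad\forall x\in\supp\mm,
$$
with no factor $c_D^{3/q}$. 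Integrating, applying dominated convergence along $\delta_k\downarrow0$ (with dominant $\Lip(u)\,\chi_{\{\sfd(\cdot,\supp u)\le 1\}}$) and using $\Lip(u,B(x,6\delta))\to\Lip_a(u,x)$, we obtain $\mathcal F^\infty(u)\le 6^q\int\Lip_a^q(u,\cdot)\,\d\mm$. Lower semicontinuity of the $\Gamma$-limit $\mathcal F^\infty$, together with Proposition~\ref{prop:easy}, extends this to every $u\in W^{1,q}(X,\sfd,\mm)$, yielding $\mathcal F^\infty(u)\le 6^q\,\Ch_q(u)$ after invoking the equivalence of weak and relaxed gradients from Theorem~\ref{thm:graduguali}.

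For the lower bound, the key discrete statement is an $\ell^\infty$-analogue of Lemma~\ref{lem:ugdiscrete}: $4|\mathcal D_\delta u|_\infty$ is a $q$-weak upper gradient of $P_\delta u$ up to scale $\delta/2$. Indeed, for any curve $\gamma\in AC^p([0,1];X)$ with $\tfrac\delta2\le\int_a^b|\dot\gamma_t|\,\d t\le\delta$ and any $t\in[a,b]$ one has $\sfd(\gamma_t,\gamma_a),\sfd(\gamma_t,\gamma_b)\le\delta$, so both cells containing $\gamma_a,\gamma_b$ are neighbors of the cell containing $\gamma_t$. The triangle inequality and the definition of $|\mathcal D_\delta u|_\infty$ give
$$
|P_\delta u(\gamma_b)-P_\delta u(\gamma_a)|\le 2\delta\,|\mathcal D_\delta u|_\infty(\gamma_t),
$$
and integrating in $t$ while using $\int_a^b|\dot\gamma_t|\,\d t\ge\delta/2$ yields $|P_\delta u(\gamma_b)-P_\delta u(\gamma_a)|\le 4\int_a^b|\mathcal D_\delta u|_\infty(\gamma_t)\,|\dot\gamma_t|\,\d t$. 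Arbitrary curves exceeding scale $\delta/2$ are handled by subdivision. Now, given $u_k\to u$ in $L^q$ realizing $\mathcal F^\infty(u)$, the bound on $\mathcal F_{q,\delta_k}^\infty(u_k)$ provides weak $L^q$-compactness for $|\mathcal D_{\delta_k}u_k|_\infty$; Mazur together with Theorem~\ref{thm:stabweak} applied to $\bar u_k:=P_{\delta_k}u_k$ (with $\varepsilon_k=\delta_k/2\to0$) then identifies $4g$, with $g$ any weak limit point, as a $q$-weak upper gradient of $u$. Hence $\Ch_q(u)\le 4^q\int g^q\,\d\mm\le 4^q\,\mathcal F^\infty(u)$, and \eqref{ts:est-f-chnd} follows with $\eta_\infty=\max\{6^q,4^q\}=6^q$.

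The main obstacle is the very last ingredient, namely ensuring $\bar u_k=P_{\delta_k}u_k\to u$ in $L^q$ (or at least $\mm$-a.e. along a subsequence), which is what feeds Theorem~\ref{thm:stabweak}. By contractivity of $P_\delta$ this reduces to $P_{\delta_k}u\to u$ in $L^q$, i.e.\ to Lemma~\ref{lem:contractivity}; the proof given there used the local compactness of $\supp\mm$ that comes from doubling. Without that, one must argue directly from the Polish/Radon structure: since $\mm$ is tight on bounded sets, one can approximate $u$ in $L^q$ by bounded continuous functions vanishing outside a bounded set; such a function is uniformly continuous on any compact $K$, and on $K$ the averages $P_\delta u$ converge uniformly to $u$ as $\delta\downarrow0$, while outside $K$ the contribution is controlled by $\mm(B\setminus K)$ via tightness and the $L^\infty$ bound. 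This is the step I expect to require the most care, since the standard Lebesgue differentiation framework is unavailable in the non-doubling setting; everything else is a direct translation of the arguments in Section~7.
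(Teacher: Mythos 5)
Your proposal is correct and follows essentially the same route as the paper, which for this theorem simply states that the proof of Theorem~\ref{thm:main} carries over; your constants ($6^q$ for the upper bound without the $c_D^3$ factor, $4^q$ for the lower bound, hence $\eta_\infty=6^q$) match the paper's claim. You also rightly identified that Lemma~\ref{lem:contractivity} is the only remaining place where doubling entered, and your fix is sound --- indeed it can be made even simpler by testing $\Ph{\delta}u\to u$ on Lipschitz functions with bounded support, for which $|\Ph{\delta}v-v|\le \tfrac{5}{2}\delta\,\Lip(v)$ on a fixed bounded set, avoiding any compactness argument.
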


The proof follows closely the one of Theorem~\ref{thm:main}. An admissible choice for $\eta_{\infty}$ is $6^q$.
  
\section{Appendix C: some open problems}

In this section we list and discuss some open problems. 

\smallskip\noindent
{\bf 1. Optimality of the doubling assumption for reflexivity.} We don't know whether the doubling assumption on $(X,\sfd)$ can
be weakened. However, the finite difference scheme used in this paper seems really to rely on this assumption.

\smallskip\noindent
{\bf 2. Optimality of the Poincar\'e assumption for the lower semicontinuity of slope.} As shown to us by P.Koskela, the doubling
assumption, while sufficient to provide reflexivity of the Sobolev spaces $W^{1,q}(X,\sfd,\mm)$, is not sufficient to ensure the 
lower semicontinuity \eqref{lscChee} of slope. Indeed, one can consider for instance the Von Koch snowflake $X\subset\R^2$
endowed with the Euclidean distance. Since $X$ is a self-similar fractal satisfying Hutchinson's open set condition (see for
instance \cite{Falconer}), it follows
that $X$ is Ahlfors regular of dimension $\alpha=\ln 4/\ln 3\in (1,2)$, namely $0<\Haus{\alpha}(X)<\infty$, where $\Haus{\alpha}$ denotes $\alpha$-dimensional
Hausdorff measure in $\R^2$. Using self-similarity it is easy to check that $(X,\sfd,\Haus{\alpha})$ is doubling. However, since
absolutely continuous curves with values in $X$ are constant, the $q$-weak upper gradient of any Lipschitz function $f$ vanishes. Then, the equivalence
of weak and relaxed gradients gives $|\nabla f|_{*,q}=0$ $\Haus{\alpha}$-a.e. on $X$. By Proposition~\ref{prop:easy} we obtain Lipschitz functions
$f_n$ convergent to $f$ in $L^q(X,\Haus{\alpha})$ and satisfying
$$
\lim_{n\to\infty}\int_X{\rm Lip}_a^q(f_n,x)\,\d\Haus{\alpha}(x)=0.
$$
Since ${\rm Lip}_a(f_n,\cdot)\geq |\nabla f_n|$, if $|\nabla f|$ is not trivial we obtain a counterexample to \eqref{lscChee}.

One can easily show that any linear map, say $f(x_1,x_2)=x_1$, has a nontrivial slope on $X$ at least $\Haus{\alpha}$-a.e. in $X$. 
Indeed, $|\nabla f|(x)=0$ for some $x\in X$ implies that the geometric tangent space to $X$ at $x$, namely all limit points as $y\in X\to x$ of normalized
secant vectors $(y-x)/|y-x|$, is contained in the vertical line $\{x_1=0\}$. However, a geometric rectifiability criterion
(see for instance \cite[Theorem~2.61]{Ambrosio-Fusco-Pallara}) shows that this set of points $x$ is contained in a countable union
of Lipschitz curves, and it is therefore $\sigma$-finite with respect to $\Haus{1}$ and $\Haus{\alpha}$-negligible.

This proves that doubling is not enough.
On the other hand, quantitative assumptions weaker than the Poincar\'e inequality might still be sufficient to provide the
result.

\smallskip\noindent
{\bf 3. Dependence on $q$ of the weak gradient.} The dependence of $|\nabla f|_{w,q}$ on $q$ is still open: more
precisely, assuming for simplicity that $\mm(X)$ is finite, $f\in W^{1,q}(X,\sfd,\mm)$ easily implies via
Proposition~\ref{prop:easy} that $f\in W^{1,r}(X,\sfd,\mm)$ and that
$$
|\nabla f|_{r,*}\leq|\nabla f|_{q,*}\qquad\text{$\mm$-a.e. in $X$.}
$$
Whether equality $\mm$-a.e. holds or not is an open question. As pointed out to us by Gigli, this holds if $|\nabla g|_{w,q}$
is independent on $q$ for  a dense class $\mathcal D$ of functions (for instance Lipschitz functions $g$ with bounded support); indeed, if this the case, for 
any $g\in\mathcal D$ we have
$$
|\nabla f|_{q,*}\leq |\nabla g|_{q,*}+|\nabla (f-g)|_{q,*}=|\nabla g|_{r,*}+|\nabla (f-g)|_{q,*}
$$
and considering $g_n\in\mathcal D$ with $g_n\to f$ strongly in $W^{1,q}(X,\sfd,\mm)$ we obtain the result, since convergence occurs also
in $W^{1,r}(X,\sfd,\mm)$ and therefore $|\nabla g_n|_{r,*}\to|\nabla f|_{r,*}$ in $L^r(X,\mm)$. 

Under doubling and Poincar\'e assumptions, we know that these requirements are met with the class
$\mathcal D$ of Lipschitz functions with bounded support, therefore as pointed out
in \cite{Cheeger00} the weak gradient is independent of $q$. Assuming only the doubling condition, the question is
still open.

\def\cprime{$'$} \def\cprime{$'$}


\begin{thebibliography}{10}

\bibitem{Ambrosio-Fusco-Pallara}
{\sc L.~Ambrosio, N.~Fusco, and D.~Pallara}, 
{\em Functions of Bounded Variation and Free Discontinuity Problems}, Oxford University Press, 2000.

\bibitem{Ambrosio-Gigli-Savare08}
{\sc L.~Ambrosio, N.~Gigli, and G.~Savar{\'e}}, {\em Gradient flows in metric
  spaces and in the space of probability measures}, Lectures in Mathematics ETH
  Z\"urich, Birkh\"auser Verlag, Basel, second~ed., 2008.

\bibitem{Ambrosio-Gigli-Savare11}
\leavevmode\vrule height 2pt depth -1.6pt width 23pt, {\em Calculus and heat
  flows in metric measure spaces with {R}icci curvature bounded from below},
  Submitted paper, arXiv:1106.2090  (2011).
	
\bibitem{Ambrosio-Gigli-Savare12}
\leavevmode\vrule height 2pt depth -1.6pt width 23pt, {\em Density of Lipschitz functions and 
equivalence of weak gradients in metric measure spaces}, Preprint  (2011), to appear on
Revista Matematica Iberoamericana.

\bibitem{AmbrosioDiMarino12}
{\sc L.~Ambrosio and S.~Di Marino}, {\em Equivalent definitions of BV space and of total variation on metric measure spaces,}
Preprint (2012).

\bibitem{Brezis73}
{\sc H.~Br{\'e}zis}, {\em Op\'erateurs maximaux monotones et semi-groupes de
contractions dans les espaces de {H}ilbert}, North-Holland Mathematics Studies, Notas de Matem\'atica, North-Holland Publishing Co.,
Amsterdam, 1973.

\bibitem{Cheeger00}
{\sc J.~Cheeger}, {\em Differentiability of {L}ipschitz functions on metric
  measure spaces}, Geom. Funct. Anal., 9 (1999), pp.~428--517.

\bibitem{Chr}{\sc M.~Christ}, {\em A {$T(b)$} theorem with remarks on analytic capacity and the 
{C}auchy integral}, {Colloq. Math.}, {60/61}{(1990)}, pp.~{601--628}.

%
\bibitem{DalMaso93}
{\sc G.~Dal Maso}, {\em An introduction to $\Gamma$ convergence}, Progress in Nonlinear Differential 
Equations and Their Applications, Birkh\"auser Verlag, Boston, second~ed., 1993.

\bibitem{Falconer}
{\sc K.~J.~Falconer}, {\em The Geometry of Fractal Sets}, Cambridge Tracts in Mathematics, 85, Cambridge University Press, 1985.

\bibitem{Fuglede}
{\sc B.~Fuglede}, {\em Extremal length and functional completion}, Acta Math.,
98 (1957), pp.~171--219.

\bibitem{GigliKuwadaOhta10}
{\sc N.~Gigli, K.~Kuwada, and S.~Ohta}, {\em Heat flow on {A}lexandrov spaces}, Comm. Pure Appl. Math. (2012) doi: 10.1002/cpa.21431.

\bibitem{Gigli12} {\sc N.Gigli}, {\em On the differential structure of metric measure spaces and applications}, Preprint (2012).

\bibitem{Gozlan}
{\sc N.~Gozlan, C.~Roberto, and P.~Samson}, {\em Hamilton-{J}acobi equations on
  metric spaces and transport entropy inequalities},
\newblock Preprint (2012).

\bibitem{Hajlasz-Koskela}
{\sc P.~Haj{\l}asz and P.~Koskela}, {\em Sobolev met {P}oincar\'e}, Mem. Amer.
 Math. Soc., 145, 2000.

\bibitem{Heinonen07}
{\sc J.~Heinonen}, {\em Nonsmooth calculus}, Bull. Amer. Math. Soc., 44 (2007),
pp.~163--232.

\bibitem{Heinonen-Koskela98}
{\sc J.~Heinonen and P.~Koskela}, {\em Quasiconformal maps in metric spaces
with controlled geometry}, Acta Math., 181 (1998), pp.~1--61.

\bibitem{Heinonen-Koskela99}
\leavevmode\vrule height 2pt depth -1.6pt width 23pt, {\em A note on
  {L}ipschitz functions, upper gradients, and the {P}oincar\'e inequality}, New
  Zealand J. Math., 28 (1999), pp.~37--42.
  
 \bibitem{Peirone}
{\sc P.~E.~Herman, R.~Peirone, and R.~S.~Strichartz}, {\em {$p$}-energy and {$p$}-harmonic functions on {S}ierpinski
             gasket type fractals}, {Potential Anal.}, {20} (2004), pp.~125--148.
             
\bibitem{Hewitt-Stronberg}
{\sc E.~Hewitt and K.~Stronberg}, {\em Real and abstract analysis}, Graduate Texts in Mathematics, 25, Springer, 1975.

\bibitem{Koskela-MacManus}
{\sc P.~Koskela and P.~MacManus}, {\em Quasiconformal mappings and {S}obolev
spaces}, Studia Math., 131 (1998), pp.~1--17.

\bibitem{Kuwada10}
{\sc K.~Kuwada}, {\em Duality on gradient estimates and {W}asserstein
controls}, Journal of Functional Analysis, 258 (2010), pp.~3758--3774.

\bibitem{Lisini07}
{\sc S.~Lisini}, {\em Characterization of absolutely continuous curves in
{W}asserstein spaces}, Calc. Var. Partial Differential Equations, 28 (2007),
pp.~85--120.

\bibitem{Shanmugalingam00}
{\sc N.~Shanmugalingam}, {\em Newtonian spaces: an extension of {S}obolev
spaces to metric measure spaces}, Rev. Mat. Iberoamericana, 16 (2000),
pp.~243--279.

\bibitem{Sturm}
{\sc K.T.~Sturm}, {\em How to construct diffusion processes on metric spaces},
Potential Analysis, {8} (1998), pp.~149--161.

\bibitem{Villani09}
{\sc C.~Villani}, {\em Optimal transport. Old and new}, Grundlehren
der Mathematischen Wissenschaften, 338, Springer-Verlag, Berlin, 2009.

\end{thebibliography}
\end{document}